\documentclass[11pt]{amsart}

\usepackage{amsfonts, amstext, amsmath, amsthm, amscd, amssymb}
\usepackage[utf8]{inputenc}
\usepackage{float}
\usepackage{graphicx, color}
\usepackage{xcolor}
\usepackage{bm}
\usepackage{microtype}
\usepackage[normalem]{ulem} 

\usepackage[hidelinks,pagebackref,pdftex]{hyperref}
\usepackage{tikz}
\usepackage{tikz-cd}
\usepackage{pgfplots}
\usepackage{subcaption}

\newcommand{\rme}{\mathrm{e}}
\newcommand{\rmi}{\mathrm{i}}
\newcommand{\re}{\operatorname{Re}}
\newcommand{\im}{\operatorname{Im}}

\renewcommand{\setminus}{{\smallsetminus}}

\newcommand{\Addresses}{{% 
\bigskip
  \tiny %

  \noindent  \textsc{R.~N.~Araújo dos Santos} \\
  \textsc{Institute of Mathematics and Computer Science (ICMC)\\
University of São Paulo (USP), Avenida Trabalhador São-Carlense, 400 - Centro\\
CEP: 13566-590 - São Carlos - SP, Brazil.}\\
 \textit{E-mail}:
 \texttt{rnonato@icmc.usp.br}
 \par\nopagebreak
\medskip
  \tiny %

  \noindent \textsc{B.~Bode} \\ 
  \textsc{Departamento de Matemática Aplicada a la Ingeniería Industrial, ETSIDI, Universidad Politécnica de Madrid, \\ 
  Ronda de Valencia 3, 28012 Madrid, Spain.}\\
  \textit{E-mail}: \texttt{benjamin.bode@upm.es}
  \par\nopagebreak
  
\medskip
  
  \tiny %
  
\noindent \textsc{T.~ de Paiva} \\ 
  \noindent \textsc{Beijing International Center for Mathematical Research, Peking University, \\ 
  Beijing 100871, China P.R.} \\
  \textit{E-mail}: \texttt{thhiagodepaiva@gmail.com}

\medskip
  \tiny %

    \noindent \textsc{E.~L.~Sanchez Quiceno} \\ 
  \textsc{Departamento de Matemática, Universidade Federal de São Carlos, \\ 
  Rodovia Washington Luís, Km 235, CP 13560-905, São Carlos - SP, Brazil.}

 \smallskip % 

  \noindent \textsc{Instituto Universitario de Matemática Pura y Aplicada,  Universitat Politècnica de València, \\ 
  Ed. 8E, Camino de Vera, s/n 46022 València, Spain.} \\
  \textit{E-mail}: \texttt{ederleansanchez@alumni.usp.br}
  \par\nopagebreak
  
}}

\newtheorem{theorem}{Theorem}[section]
\newtheorem{proposition}[theorem]{Proposition}
\newtheorem{lemma}[theorem]{Lemma}
\newtheorem{corollary}[theorem]{Corollary}

\newtheorem*{namedtheorem}{\theoremname}
\newcommand{\theoremname}{testing}

\theoremstyle{definition}
\newtheorem{definition}[theorem]{Definition}
\newtheorem{example}[theorem]{Example}

\newtheorem{remark}[theorem]{Remark}

\title[Essential Torus, Link of Mixed Singularity, and Newton Polygon]{Essential Tori associated with links of Mixed Singularities}
\author[R. Araújo dos Santos]{Raimundo N. Araújo dos Santos}
\author[B. Bode]{Benjamin Bode}
\author[T. de Paiva]{Thiago de Paiva}
\author[E. Sanchez Quiceno]{Eder L. Sanchez Quiceno}
\keywords{essential surfaces,  links of mixed singularities, mixed polynomials, Newton polygons, non-hyperbolicity.}
 \subjclass[2020]{Primary 57K10 14P05 ; Secondary 14M25 57K31 14P25}
\begin{document}

\begin{abstract}
 We establish a direct connection between the analytic data of weakly isolated mixed singularities and the topology of their associated links. More precisely, we prove that the existence of essential tori, topological information, in the complements of links arising from weakly isolated mixed singularities can be detected directly from properties of the defining mixed polynomial, provided that it is convenient, non-degenerate and $\Gamma$-nice.

Our results provide explicit and computable criteria, expressed purely in terms of the polynomial data, that determine the presence of essential tori in the link exterior. In particular, these criteria yield effective conditions ensuring that such links are non-hyperbolic.

This approach provides a new method to extract topological information about link complements without requiring an explicit determination of the link type, thereby establishing a concrete bridge between the analytic structure of mixed polynomials and the geometric topology of their associated links.

\end{abstract}

\maketitle

\section{Introduction}

\subsection{Motivation}
Since the pioneering work of Mostow, Prasad and Thurston \cite{mostow1973strong, prasad1973strong, Thurston}, it has become clear that the geometric structures that can be placed on a topological 3-manifold play a fundamental role in understanding its topology. For instance, many links $L\subset S^3$ admit a metric of constant curvature $-1$ in their complements $S^3\setminus L$ and the volume of $S^3\setminus L$ with respect to that metric is a powerful link invariant, the hyperbolic volume of $L$.
In fact, it follows from Thurston's classification that every knot belongs to exactly one of three families: hyperbolic knots (permitting a hyperbolic metric as above), torus knots or a satellite knots. Precise definitions will be given later, see Subsection~\ref{subsection-Essential}. Analogous statements hold for links, where hyperbolicity can be detected from topological information obtained from the non-existence of certain types of surfaces, called essential surfaces. 

An embedded torus in the complement of a link is called \emph{essential} if it is incompressible and not boundary-parallel, see Definition~\ref{essential} for a formal definition. The presence of such a surface reflects non-trivial topological structure in the link complement and captures non-trivial topological features of the link complement. In particular, it follows from Thurston’s result that if the complement of a link contains an essential torus, then the link is not hyperbolic.

From this point of view, the existence of an essential torus indicates that the link exhibits a more intricate topological structure, typically associated with satellite-type behavior. More generally, essential tori encode structural information about the link complement and reveal the presence of non-trivial decompositions. Thus, detecting essential tori can be viewed as a way to extract qualitative topological information about the link, going beyond the mere distinction between hyperbolic and non-hyperbolic cases.

There is a substantial body of work devoted to understanding when link complements admit hyperbolic structures. In particular, significant progress has been made in detecting hyperbolicity using geometric and topological methods; see, for instance, the book by Purcell~\cite{Purcell2020}. These approaches typically rely on detailed analysis of the topology or geometry of the link complement.

Knots and links arise naturally in many areas of mathematics. In this paper, we focus on links associated with singularities of real polynomial maps. A central problem in this context is to understand how the analytic structure of a defining polynomial map influences the topology of the associated link and its complement.

Let $f:\mathbb{C}^2 \to\mathbb{C}$ be a holomorphic function, defining a complex plane curve with an isolated singularity at the origin in $\mathbb{C}^2.$ Using classical tools from singularity theory one may consider $f$ as a holomorphic polynomial, after a biholormorphic coordinate change at the origin. Hence, one may use arguments of transversality to show that for all sufficiently small radii $\rho$ the intersection $L_f:=V(f)\cap S^3_{\rho}$ of the complex plane curve $V(f)=\{(z_1,z_2)\in\mathbb{C}^2:f(z_1,z_2)=0\}$ and the 3-sphere $S^3_\rho$ of radius $\rho$ centered at the singularity produces a topological link, in the sense that the space $L_f$ is a finite disjoint union of circles $S^{1}$ embedded in $S^3_\rho.$ One can show that the isotopy class of $L_f$ in $S^3_\rho$ does not depend on the choice of the radius $\rho$, as long as it is chosen small enough. Therefore, the link type of $L_f$ becomes a local topological description of the singularity of $f$. 
%Moreover, it was proved by J. Milnor in \cite{Milnor1968} that 
%the argument function $\mathrm{arg}(f):=\dfrac{f}{|f|}:S^3_\rho \setminus L_{f}\to S^{1}$ is a smooth projection of a locally trivial fiber bundle. Hence,  all 
%such links are fibered links and 
These links are classically called  \textit{algebraic links}. We sometimes simply say $L_f$ is the link of $f$. The link types that can arise in this way are completely characterized. They are torus links or certain iterated cables of torus links (a special type of satellite link), or possibly certain unions of such links (if $f$ is not irreducible). In particular, none of these algebraic links is hyperbolic. However, this scenario changes drastically if we  consider the real counterparts of holomorphic polynomials, namely real polynomial maps $f:\mathbb{R}^{4}\to \mathbb{R}^{2}$. One finds a much richer topological landscape where the constraints of the complex setting no longer hold. Indeed, in \cite{perron}, B. Perron proved the existence of a real polynomial map $g:\mathbb{R}^{4}\to \mathbb{R}^{2},$ with isolated singularity  at the origin such that for all $\rho>0$ small enough the link $L_{g}=V(g)\cap S_{\rho}^{3}$ is isotopic to figure-eight knot, which is the canonical example of a hyperbolic knot.
 
 We should mention an important distinction to the complex case. A singularity
of a real map $f$ can be \textit{weakly isolated}, that is, it is the only singular point of $f$ in
$U \cap V(f)$ for some neighbourhood $U$, or it can be isolated, that is, it is the
only singular point in $U$ for some neighbourhood $U$. In both cases, one may use again arguments of transversality to show that the link of a weakly isolated singularity is well defined, up to isotopy, on all spheres of radii small enough centered at the origin. In fact, it was proved in \cite{akbulutking} that every link arises as the link of a weakly isolated singularity be it hyperbolic or not, while links of isolated singularities must be fibered, according to Milnor's work in \cite{Milnor1968}. But obviously isolated singularities pose much stronger conditions on the polynomial realization.

This paves the way to an interesting question, as:

\vspace{0.2cm}

\textit{To what extent can analytic properties of a defining real polynomial map determine topological features of the associated link and its complement?}

\vspace{0.2cm}

The present paper addresses this problem by focusing on a specific class of real maps: those arising from the class of convenient, non-degenerate, $\Gamma$-nice mixed polynomials, see Definition~\ref{def:convandnd} and \ref{def:Gammaniceness}, which are already known to have weakly isolated singularities. We employ geometric and combinatorial tools extracted from a 2-dimensional Newton polygon to detect topological information from these links. More precisely, we show that the existence of essential tori in the link complement can be detected directly from the analytic data of the defining mixed polynomial, thereby obtaining concrete topological information directly from the underlying analytic structure. This provides explicit and computable criteria, expressed in terms of the Newton polygon, that allow one to determine non-hyperbolicity without requiring an explicit description of the link type.

\subsection{Main Results}
Precise definitions and notation regarding the Newton polygon of a mixed polynomial are reviewed in Section~\ref{sec:Newton}. We can write every real polynomial $f:\mathbb{R}^4\to\mathbb{R}^2$ as a mixed polynomial $f:\mathbb{C}^2\to\mathbb{C}$ and associate to each such polynomial its Newton polygon. If $f$ satisfies several desirable properties (convenient, non-degenerate and $\Gamma$-nice), then it has a weakly isolated singularity at the origin and its link can be described as the union of (possibly empty) links $L_1, L_2,\ldots, L_N$, each of which is associated with a compact 1-face ``edge") of the boundary of the Newton polygon of $f$. If we label the edges in the Newton diagram from left to right by $1,2,\ldots,N$, then $L_i$ is associated with the edge $i$. Furthermore, there are natural tori that separate the different components. We write $\partial V_i$ for the torus that separates $\bigcup_{j=1}^iL_j$ from $\bigcup_{j=i+1}^N L_j$.

We write \(\mathcal{P}_0(f)=\{\Delta_0, \Delta_1, \dots, \Delta_N\}\) for the $0$-faces (``vertices") of the Newton boundary of $f$, so that $\Delta_i$ is the vertex that is the common point of the edges $i$ and $i+1$. For each such vertex $\Delta_i$ we can define certain multiplicities $\mathrm{m}_\mathrm{s}(f^t_{\Delta_i},0)$ and $\mathrm{m}_\mathrm{s}(f^\varphi_{\Delta_i},0)$, which can be calculated directly from the appropriate monomials of $f$.

Our main results can then be stated as follows. 
\begin{theorem}\label{prop:fastcriterion}
Let \(f:\mathbb{C}^2 \to \mathbb{C}\) be a convenient and non-degenerate mixed polynomial that is \(\Gamma\)-nice. Fix \(i \in \{1,2,\dots,N-1\}\). Denote by $L_f$ the link of $f$. If
\[
\min \Bigl\{ \bigl|\mathrm{m}_{\mathrm{s}}(f^{t}_{\Delta_i},0)\bigr|, 
\bigl|\mathrm{m}_{\mathrm{s}}(f^{\varphi}_{\Delta_i},0)\bigr| \Bigr\} > 1,
\]
then \(\partial V_i\) is an essential torus in the exterior of $L_f$. Thus, $L_f$ has at least one essential torus. In particular, $L_f$ is not hyperbolic.
\end{theorem}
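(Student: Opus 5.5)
We will show that $\partial V_i$ is incompressible on both sides in the exterior $E=S^3\setminus \nu(L_f)$ and is not boundary-parallel. Essentiality then follows from Definition~\ref{essential}, and non-hyperbolicity follows from Thurston's theorem as recalled in Subsection~\ref{subsection-Essential}.

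The basic input is the description of the link coming from the Newton polygon. For a convenient, non-degenerate, $\Gamma$-nice $f$, the torus $\partial V_i$ bounds a solid torus $V_i$ containing $\bigcup_{j\le i}L_j$. Its complement $S^3\setminus V_i$ is a solid torus $W_i$ containing $\bigcup_{j>i}L_j$. The core of $V_i$ (say the $z_2$-axis circle, up to orientation convention) is a meridian-type curve for $W_i$, and vice versa.

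The multiplicities attached to the vertex $\Delta_i$ should be read as winding numbers.
\begin{itemize}
\item $\mathrm{m}_\mathrm{s}(f^t_{\Delta_i},0)$ is the algebraic intersection number (winding) of $\bigcup_{j\le i}L_j$ with a meridian disk of $V_i$.
\item $\mathrm{m}_\mathrm{s}(f^\varphi_{\Delta_i},0)$ is the winding of $\bigcup_{j>i}L_j$ with a meridian disk of $W_i$.
\end{itemize}
Concretely, restricting $f$ to the torus $\partial V_i$, parametrised by $(t,\varphi)$ at the radius ratio determined by the weight vectors of edges $i$ and $i+1$, gives the dominant term $f_{\Delta_i}$. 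Then $f^t_{\Delta_i}$ and $f^\varphi_{\Delta_i}$ are the one-variable mixed functions whose degrees (the multiplicities of their zero at $0$) count how many times $\arg f$ winds along the two generators. An argument-principle computation on the meridian disk of $V_i$ identifies the first multiplicity with the signed count of zeros of $f$ there, and likewise for $W_i$. Establishing this identification rigorously, using $\Gamma$-niceness to ensure the zeros are transverse and confined to the region associated with the edges $j\le i$ (respectively $j>i$), is the step I expect to be the main obstacle. The delicate part is showing that no zeros "leak" across the torus. The dominance of the vertex monomials near $\partial V_i$ should handle this.

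Granting the identification, incompressibility follows from a standard lemma. If a link $K$ inside a solid torus $V$ has nonzero winding number, then $\partial V$ is incompressible in $V\setminus K$. Indeed, a compressing disk $D$ would be isotopic to a meridian disk, since a longitude cannot bound in $V$. That meridian disk must meet $K$ algebraically a nonzero number of times, contradicting $D\cap K=\emptyset$. This gives incompressibility on both sides as soon as both multiplicities are nonzero, which our hypothesis guarantees.

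For non-boundary-parallelism, note that $\partial V_i$ could only be parallel to the boundary torus of a regular neighbourhood of a single component $K$ of $L_f$, with $K$ the core of $V_i$ or of $W_i$. In that case $K$ would be isotopic to the core and have winding number exactly $\pm1$. Our hypothesis gives absolute value $>1$ on both sides, which rules out both possibilities. The bound also excludes the case where one side is empty, since an empty side has winding number $0$.

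Hence $\partial V_i$ is essential, so $L_f$ has an essential torus and is not hyperbolic.
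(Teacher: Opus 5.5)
Your proof is correct. Its second half takes a different, more economical route than the paper.

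The identification you single out as the main obstacle is already Lemma~\ref{lemma:windingnumber}, which gives $w(L_1\cup\dots\cup L_i)=\mathrm{m}_{\mathrm{s}}(f^t_{\Delta_i},0)$ and $w'(L_{i+1}\cup\dots\cup L_N)=\mathrm{m}_{\mathrm{s}}(f^\varphi_{\Delta_i},0)$. The paper proves it in the model link by applying Oka's total-multiplicity result to each $g_j^t$ and $g_{\underline{j}}^\varphi$ and telescoping. You can therefore cite it instead of running an argument principle on $f$ in $S^3_\rho$, which would also require carrying $\partial V_i$ through the isotopy of Theorem~\ref{th:linksofinnd}.

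From there the paper splits the total winding into per-component winding numbers and invokes the component-wise characterization, Theorem~\ref{th:char_essentialtori}. You instead keep $\bigcup_{j\le i}L_j\subset V_i$ and $\bigcup_{j>i}L_j\subset W_i$ as single links. You check directly that a nonzero total winding number rules out compressions on each side, and that boundary-parallelism would force a lone component isotopic to a core, hence of winding number $\pm1$. This is in effect a winding-number version of the $n=2$ case of Proposition~\ref{Pro1}, the same device the paper uses to prove Proposition~\ref{Pro3}.

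Your version needs no bookkeeping of which $L_j$ are empty; Theorem~\ref{th:char_essentialtori} is formulated for nonempty pieces, and the paper does not reindex. It also sidesteps the paper's step ``hence there exists $j\le i$ with $|w(L_j)|>1$''. That step does not follow from $\bigl|\sum_{j\le i}w(L_j)\bigr|>1$, e.g.\ when $w(L_1)=w(L_2)=1$. The paper's conclusion is still right: Theorem~\ref{th:char_essentialtori} only asks for a nonzero wrapping number when several nonempty components lie on a side, and a lone nonempty component carries the whole winding number.

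Conversely, the paper's component-wise route is what makes the weaker, nonzero-difference hypotheses of Theorem~\ref{thm:geralcriterionessentialtori} available.
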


\begin{theorem}\label{thm:geralcriterionessentialtori}
Let $f:\mathbb{C}^2 \to \mathbb{C}$ be a convenient and non-degenerate mixed polynomial that is $\Gamma$-nice. Let $I_f=\{i_1,i_2,\dots,i_{n}\}$ be the set of 1-faces of the Newton boundary of $f$ such that $L_{i_j}$ is not empty for all $j$ and assume $n\geq 3$. Denote by $L_f$ the link of $f$.

\medskip
\noindent (i) If $|\mathrm{m}_{\mathrm{s}}(f^{t}_{\Delta_{i_1}},0)|>1$ and for some $i_j>i_1$, $$|\mathrm{m}_{\mathrm{s}}(f^\varphi_{\Delta_{i_j-1}},0)-\mathrm{m}_{\mathrm{s}}(f^\varphi_{\Delta_{i_j}},0)|>0,$$ then $\partial V_{i_1}$ is essential in the exterior of $L_f$. 
    
     \medskip
\noindent
(ii) If $1<k<n-1$ and for $i_j\leq i_k<i_l$, $$\min(|\mathrm{m}_{\mathrm{s}}(f^{t}_{\Delta_{i_j}},0)-\mathrm{m}_{\mathrm{s}}(f^{t}_{\Delta_{i_j-1}},0)|,|\mathrm{m}_{\mathrm{s}}(f^\varphi_{\Delta_{i_l-1}},0)-\mathrm{m}_{\mathrm{s}}(f^\varphi_{\Delta_{i_l}},0)|)>0,$$ 
then $\partial V_{i_k}$ is essential in the exterior of $L_f$.

    \medskip
\noindent
(iii) If $|\mathrm{m}_{\mathrm{s}}(f^\varphi_{\Delta_{i_n-1}},0)|\}>1$ and for some $i_j<i_n$, $$|\mathrm{m}_{\mathrm{s}}(f^{t}_{\Delta_{i_j}},0)-\mathrm{m}_{\mathrm{s}}(f^{t}_{\Delta_{i_j-1}},0)|>0,$$ then $\partial V_{i_{n-1}}$ is essential in the exterior of $L_f$.

Thus, in any of these cases, $L_f$ has at least one essential torus. In particular, $L_f$ is not hyperbolic.
\end{theorem}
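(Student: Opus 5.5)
The plan is to use the structure theorem for links of convenient, non-degenerate, $\Gamma$-nice mixed polynomials. Under these hypotheses $L_f$ is the union of the links $L_1,\dots,L_N$ attached to the edges. Each $L_j$ lies in a region of $S^3$ bounded by the tori $\partial V_{j-1}$ and $\partial V_j$. There $L_j$ is a union of torus-type curves (closed braids) whose slopes are determined by the normal vector of edge $j$.

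In the coordinates adapted to this decomposition, the torus $\partial V_i$ bounds a solid torus $V_i$ containing $\bigcup_{j\le i}L_j$ and a complementary solid torus $W_i$ containing $\bigcup_{j>i}L_j$. The multiplicities $\mathrm{m}_{\mathrm{s}}(f^t_{\Delta_i},0)$ and $\mathrm{m}_{\mathrm{s}}(f^{\varphi}_{\Delta_i},0)$ should be exactly the algebraic intersection (winding) numbers of these two sublinks with a meridian disk of $V_i$ and of $W_i$, respectively. I would first establish this, presumably as an earlier lemma of the paper. The argument is that $f$ restricted to $\partial V_i$ is governed by the vertex polynomial $f_{\Delta_i}$, and the degree of its argument in the $t$- and $\varphi$-directions counts the signed number of strands passing through $V_i$ (respectively $W_i$).

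With this identification, Theorem~\ref{prop:fastcriterion} reduces to a classical fact. Let $T$ be the boundary of a solid torus $V$ in $S^3$, and let $K$ be a link inside $V$ whose winding number in $V$ has absolute value greater than $1$. Then $T$ is incompressible in $V\setminus K$: a compressing disk would have to be a meridian disk, and that disk meets $K$ algebraically a nonzero number of times. The same argument applied to $W_i$ shows that $T$ is incompressible in $W_i\setminus(\bigcup_{j>i}L_j)$. Hence $\partial V_i$ is incompressible in the link exterior.

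Next I rule out boundary parallelism. Suppose $\partial V_i$ were parallel to a boundary torus of a component $K$. Then one side would be a collar $T^2\times I$ containing only $K$, so one of the solid tori would contain $K$ as its core. The winding number of a core is $\pm1$, which contradicts the assumption that both absolute values exceed $1$. Winding number $0$ on either side would also obstruct essentiality, which is why the hypothesis takes the minimum. This gives essentiality, and non-hyperbolicity then follows from Thurston's theorem as recalled in Subsection~\ref{subsection-Essential}.

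For Theorem~\ref{thm:geralcriterionessentialtori} the strategy is the same, but the winding number is computed additively from the contributions of the individual nonempty $L_{i_j}$. The difference $\mathrm{m}_{\mathrm{s}}(f^t_{\Delta_{i_j}},0)-\mathrm{m}_{\mathrm{s}}(f^t_{\Delta_{i_j-1}},0)$ should equal the winding number of $L_{i_j}$ alone around the core of $V$, and similarly in the $\varphi$-direction for $W$. A nonzero difference therefore shows that a sublink inside the relevant solid torus is not null-homologous there, which gives incompressibility on that side. Boundary parallelism is excluded because each side contains components coming from at least two distinct nonempty $L_{i_j}$ when $n\ge3$. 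In case (i) this uses $i_1$ together with the condition $|\cdot|>1$ on the $V$ side. In case (iii) it is the symmetric statement. In case (ii) there are at least two nonempty pieces on each side.

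One subtlety remains here: even with nonzero winding, a union of several components might still leave a compressible torus if some of them could be isotoped off the meridian disk. I would handle this by a standard innermost-curve argument on the meridian disk, using that every $L_{i_j}$ is a closed braid in its layer, so all of its components have windings of the same sign.

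The main obstacle is the identification of $\mathrm{m}_{\mathrm{s}}$ at vertices with the winding numbers of the sublinks in the two solid tori, with correct signs. Once that is in place, the topology reduces to standard incompressibility arguments for tori containing braided links.
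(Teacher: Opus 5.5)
Your argument is correct in substance, but you organize the topology differently from the paper. The paper's proof is a short reduction. Lemma~\ref{lemma:windingnumber} turns the hypotheses into $|w(L_{i_1})|>1$, $w(L_{i_j})\neq 0$ and $w'(L_{i_l})\neq 0$. The inequality $|w|\le\operatorname{wrap}$ converts these into wrapping-number conditions, and Theorem~\ref{th:char_essentialtori} (built from Lemma~\ref{unknotted}, Lemma~\ref{lem:additive} and Propositions~\ref{Pro1}--\ref{Pro3}) then gives essentiality. You use the same identification of multiplicities with winding numbers, but you verify the two defining properties directly:
\begin{itemize}
\item A compressing disk on one side must be a meridian disk of that unknotted solid torus. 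Since the winding number is homological, a single sublink with $w(L_{i_j})\neq 0$ (resp.\ $w'(L_{i_l})\neq 0$) meets every such disk.
\item Boundary parallelism would force one side to contain a single component as its core. This is excluded by having two nonempty pieces on that side, or by $|w|>1$ in cases (i) and (iii).
\end{itemize}
This bypasses wrapping numbers entirely and is self-contained. The paper's detour instead yields an if-and-only-if characterization in wrapping numbers, which is stronger than anything the winding-number hypotheses can exploit. Your treatment of Theorem~\ref{prop:fastcriterion}, using the total winding number of $L_1\cup\dots\cup L_i$ on each side, also avoids the paper's passage to a single $L_j$ with $|w(L_j)|>1$. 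That step does not follow from $|\sum_j w(L_j)|>1$.

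Two statements in your proposal are false, though neither is needed. First, $L_j$ is not in general a union of torus-type curves or a closed braid with slopes dictated by the edge; that is the holomorphic picture. For mixed polynomials, roots of $g_j^t$ in $\mathbb{C}^*$ can come in cancelling pairs of opposite sign (the $2K$ in \eqref{eq:numberofroot}). For $f=(u-v)\overline{(u+v)}$, the link $L_1$ consists of two oppositely oriented longitudes with $w(L_1)=0$. So your final ``subtlety'' paragraph, which relies on all components of $L_{i_j}$ winding with the same sign, would fail as written. It also addresses a non-issue: once one sublink has nonzero algebraic intersection with every meridian disk, no meridian disk misses $L_f$, whatever the other components do. Simply delete that paragraph; the preceding one already finishes the proof. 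Second, winding number $0$ on a side does not obstruct essentiality. A Whitehead-type pattern (cf.\ Figure~\ref{fig:whitehead}) has winding number $0$ but positive wrapping number. It only places that case outside the reach of a winding-number criterion.
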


Theorem~\ref{prop:fastcriterion} serves as a faster criterion, as it relies solely on the analysis of the multiplicity values associated with a single $0$-face $\Delta_i$.
However, Theorem~\ref{prop:fastcriterion} is less effective than Theorem~\ref{thm:geralcriterionessentialtori} in general. This is illustrated in Example~\ref{ex:example2}, where the existence of an essential torus cannot be detected by the first criterion but is guaranteed by applying Theorem~\ref{thm:geralcriterionessentialtori}.

Theorem~\ref{thm:geralcriterionessentialtori} provides more general criteria than Theorem~\ref{prop:fastcriterion}, as it covers cases with lower multiplicities. However, it requires the Newton boundary decomposition to have at least three 1-faces whose corresponding links $L_i$ are not the empty set ($n\geq 3$).  

Beyond the criteria for the existence of essential tori provided by Theorems~\ref{prop:fastcriterion} and \ref{thm:geralcriterionessentialtori}, our techniques establish a  complementary condition for non-hyperbolicity. This approach accounts implicitly not only for the presence of essential tori but also for the existence of essential spheres in the link complement. 
 \begin{theorem}
    \label{thm:nonemptycriterion}
Let \(f:\mathbb{C}^2 \to \mathbb{C}\) be a convenient and non-degenerate mixed polynomial that is $\Gamma$-nice. If there exist four distinct indexes $ i\in \{1,2,\dots,N\}$ satisfying 
\[
\max\bigl\{|\mathrm{m}_{\mathrm{s}}(f^t_{\Delta_i}, 0) - \mathrm{m}_{\mathrm{s}}(f^t_{\Delta_{i-1}}, 0)|,\,|\mathrm{m}_{\mathrm{s}}(f^\varphi_{\Delta_{i-1}}, 0) - \mathrm{m}_{\mathrm{s}}(f^\varphi_{\Delta_i}, 0)|\bigr\} > 0,
\]
then \(L_f\) is not hyperbolic.
\end{theorem}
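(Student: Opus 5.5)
We would prove Theorem~\ref{thm:nonemptycriterion} by reducing it to the statement that the link complement contains either an essential torus or an essential sphere, and reading this off the structure of the link. The first step is to relate the hypothesis to nonemptiness of the pieces $L_i$. From the description of $L_f$ as $L_1\cup\dots\cup L_N$, where $L_i$ is associated with edge $i$, the number of components of $L_i$ should be governed by the jump of the multiplicities across that edge. Concretely, we expect $L_i$ to consist of closed braids in a solid torus neighbourhood, with some components in the $t$-direction and some in the $\varphi$-direction. The number of strands should be $|\mathrm{m}_{\mathrm{s}}(f^t_{\Delta_i},0)-\mathrm{m}_{\mathrm{s}}(f^t_{\Delta_{i-1}},0)|$ in one direction and $|\mathrm{m}_{\mathrm{s}}(f^\varphi_{\Delta_{i-1}},0)-\mathrm{m}_{\mathrm{s}}(f^\varphi_{\Delta_i},0)|$ in the other. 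So the displayed condition for index $i$ says precisely that $L_i\neq\emptyset$. I would establish this using the earlier description of the link via the face functions $f_{\Delta_i}$ and the winding and degree count in the proofs of the previous theorems. The hypothesis then gives at least four edges $i_1<i_2<i_3<i_4$ with $L_{i_j}\neq\emptyset$, so $n\geq 4$ in the notation of Theorem~\ref{thm:geralcriterionessentialtori}.

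Next we would study the nested tori $\partial V_{i_1},\partial V_{i_2},\partial V_{i_3}$. The region between $\partial V_{i_{j}}$ and $\partial V_{i_{j+1}}$ is a thickened torus containing the nonempty sublink $L_{i_{j+1}}$. Consider $T=\partial V_{i_2}$, which separates $L_{i_1}\cup L_{i_2}$ from $L_{i_3}\cup L_{i_4}$, with both sides containing at least two nonempty pieces. We would argue as follows.

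If $T$ is compressible in the exterior, then compressing it yields a $2$-sphere separating nonempty sublinks. In that case $L_f$ is split and there is an essential sphere, so $L_f$ is not hyperbolic.

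If $T$ is incompressible, it remains to rule out boundary parallelism. A boundary-parallel torus bounds a solid torus containing exactly one link component as its core. Each side of $T$ contains at least two nonempty pieces $L_i$, which lie on distinct concentric tori levels. So on either side there are either at least two components, or one component together with a nontrivial winding around the core of the other. Either way $T$ cannot be parallel to a boundary torus, so it is essential.

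The main obstacle is the incompressible-but-possibly-boundary-parallel case when a piece $L_i$ is a single strand isotopic to the core of the solid torus. For example, $L_{i_1}$ could be the core of $V_{i_1}$ and $L_{i_2}$ the core-like component of the next layer, and these might merge into a single unknotted configuration. To handle this, I would first try replacing $T$ by whichever of $\partial V_{i_1},\partial V_{i_2},\partial V_{i_3}$ is not parallel to a component. With four nonempty layers, the two "inner" tori $\partial V_{i_2}$ and $\partial V_{i_3}$ both have at least two layers on each side in a combined count. If $\partial V_{i_2}$ fails to be essential, the failure forces compressibility, via the sphere argument above, or forces the inner side to be a single core component. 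In the latter case $\partial V_{i_3}$ has at least two layers plus a core on the inside and is essential, by the argument used in Theorem~\ref{thm:geralcriterionessentialtori}(ii). Counting this carefully, with four layers guaranteeing the pigeonhole, is where I expect the real work to be.
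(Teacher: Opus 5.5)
Your argument is correct and follows the paper's route: the hypothesis at index $i$ says $w(L_i)\neq 0$ or $w'(L_i)\neq 0$ (Lemma~\ref{lemma:windingnumber}), hence $L_i\neq\emptyset$ (Lemma~\ref{cor:nonemptycriterion}), and with four nonempty pieces the torus $\partial V_{i_2}$ is either compressible, giving a splitting sphere, or incompressible and then not boundary-parallel because each side contains at least two components; your split on compressibility is only a repackaging of the wrapping-number case analysis in Theorem~\ref{thm:n>3}. Two corrections: the condition only implies $L_i\neq\emptyset$ and is not equivalent to it, since $L_i$ need not be a closed braid and meets a generic meridional disk in $|w(L_i)|+2K$ points; and the ``main obstacle'' in your last paragraph does not exist, because distinct pieces $L_i$ are disjoint and cannot merge into a single core component, so your fallback via Theorem~\ref{thm:geralcriterionessentialtori}(ii), whose winding hypotheses you would not have anyway, is unnecessary.
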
 
We illustrate in Example~\ref{ex:nonhipn>3} that Theorem~\ref{thm:nonemptycriterion} identifies non-hyperbolic links of mixed polynomials not covered by Theorems~\ref{prop:fastcriterion} and~\ref{thm:geralcriterionessentialtori}.

\subsection{Structure of the article}
The outline of the remainder of the article is as follows. In Section~\ref{sec:background} we review the necessary background and definitions, both regarding hyperbolic knots and links, and the description of links of mixed polynomials in terms of Newton polygons. The tori that decompose the links of singularities $L_f$ into various components form a sequence of nested tori. In Section~\ref{sec:essential} we study in general, when such surfaces are essential (independent of the context of singularities). Section~\ref{sec:results} establishes the criteria for the existence of essential tori, which in turn determine the non-hyperbolicity of the links of mixed singularities, thereby proving our main theorems. Explicit calculations and examples are provided in Section~\ref{sec:examples}.

\vspace{0.2cm}

\textbf{Acknowledgments:} 
The first author would like to thank the partial supports of the Fapesp thematic-project
``Novas fronteiras na Teoria de Singularidades,'' proc.\ 2019/21181-0, and the
CNPq-Universal project ``Novas tend\^{e}ncias no estudo da topologia de
singularidades reais,'' proc.\ 408147/2023-7. The fourth author was supported by FAPESP (grants 2023/11366-8 and 2024/17116-6). 
 
\section{Background and Definitions}\label{sec:background}

The main goal of this paper is to detect the existence of essential tori in the exterior of links arising from mixed polynomials. Since the presence of such tori is both an obstruction to hyperbolicity and a source of topological information about the link complement, we review the necessary notions from 3--manifold topology, with emphasis on essential tori.

\subsection{Essential Tori and Hyperbolicity}\label{subsection-Essential}

Let $L \subset S^3$ be a link and denote by 
\[
M_L := S^3 \setminus N(L)
\]
its exterior, where $N(L)$ is an open tubular neighbourhood of $L$.

\begin{definition}
Let $T \subset M_L$ be an embedded torus in a link exterior. An embedded disk $D \subset M_L$ with $\partial D \subset T$ is called a \emph{compression disk} for $T$ if $\partial D$ does not bound a disk in $T$. The torus $T$ is called \emph{compressible} if it admits a compression disk, and \emph{incompressible} otherwise.
\end{definition}

\begin{definition}\label{essential}
An embedded torus $T \subset M_L$ is called \emph{essential} if it is incompressible and not isotopic to a component of $\partial M_L$.
\end{definition}

Essential tori play a central role in the study of link exteriors, as they detect non-trivial topological structure.

A link exterior is called \emph{atoroidal} if it contains no essential tori.

A knot $K \subset S^3$ is called a \emph{satellite knot} if its exterior contains an essential torus. A \emph{torus knot} is a knot that can be embedded on the surface of an unknotted torus in $S^3$.

\begin{definition}
A link $L \subset S^3$ is called \emph{hyperbolic} if its exterior $M_L$ admits a complete Riemannian metric of constant curvature $-1$ and finite volume.
\end{definition}

A fundamental consequence of Thurston's work is that hyperbolicity admits a purely topological characterization.

\begin{theorem}[Thurston \cite{Thurston1982}] 
Every knot in $S^3$ is either hyperbolic, a torus knot, or a satellite knot.
\end{theorem}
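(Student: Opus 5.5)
We would derive the trichotomy from the geometric decomposition of the knot exterior $M_K$, treating Thurston's Hyperbolization Theorem for Haken manifolds as the deep input. It should be cited rather than reproved. The trivial knot is dispatched first: it is a torus knot, since it lies on an unknotted torus as a $(1,0)$-curve. So from now on assume $K$ is nontrivial.

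\textbf{Step 1: $M_K$ is Haken.} By Alexander's theorem every embedded $2$-sphere in $S^3$ bounds balls on both sides, so $M_K$ is irreducible. Since $K$ is nontrivial, the Dehn lemma/loop theorem gives that $\partial M_K$ is incompressible. An irreducible, compact, orientable $3$-manifold with nonempty incompressible boundary is Haken.

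\textbf{Step 2: case split.} If $M_K$ contains an essential torus, then $K$ is a satellite knot, by the definition given above. Otherwise $M_K$ is atoroidal. We also need $M_K$ to be anannular, or else to handle essential annuli separately. An essential annulus in a knot exterior either yields an essential torus (as the boundary of a regular neighbourhood of the annulus union $\partial M_K$) or forces $M_K$ to be Seifert fibred. For the atoroidal Haken manifold we then apply Thurston's theorem: it is either Seifert fibred or its interior admits a complete finite-volume hyperbolic metric.

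\textbf{Step 3: Seifert fibred knot exteriors.} In the Seifert fibred case, the classification of Seifert fibred knot complements (Moser; Burde--Zieschang) applies. The base orbifold is a disk with at most two cone points. Two cone points gives a $(p,q)$-torus knot. Fewer cone points gives a solid torus, i.e. the unknot, which was excluded. This yields the torus-knot case.

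\textbf{Step 4: exclusivity.} For disjointness of the three classes we would argue as follows.
\begin{itemize}
\item A hyperbolic knot is neither torus nor satellite. In a finite-volume hyperbolic manifold every $\mathbb{Z}^2$ subgroup is peripheral, so every incompressible torus is boundary-parallel. Also $\pi_1$ has trivial center.
\item A torus knot has $\pi_1(M_K)=\langle a,b\mid a^p=b^q\rangle$ with nontrivial center, so it is not hyperbolic.
\item A torus knot is not a satellite, because incompressible tori in Seifert fibred spaces over a disk with two cone points are vertical and hence boundary-parallel.
\end{itemize}

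\textbf{Main obstacle.} The hard step is the hyperbolization of atoroidal, non-Seifert-fibred Haken manifolds. It rests on Thurston's gluing and skinning-map arguments, or alternatively on Perelman's geometrization, and we would simply cite it. The remaining points, namely the annulus/torus bookkeeping and the Seifert fibred classification, are standard and can be quoted from the literature.
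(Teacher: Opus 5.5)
Your outline is correct and is essentially the paper's route. The paper gives no independent argument and simply attributes the trichotomy to Thurston. Your sketch is the standard reduction behind that citation: the exterior of a nontrivial knot is Haken, hyperbolization disposes of the atoroidal non-Seifert-fibred case, and the Moser/Burde--Zieschang classification identifies Seifert fibred knot exteriors with torus knot exteriors. You rightly cite the deep step rather than reprove it.

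Your Step~4 on exclusivity is also sound. Its hyperbolic-versus-other part can be read off from the ``only if'' direction of Theorem~\ref{Thurston}: satellite exteriors contain essential tori, nontrivial torus knot exteriors contain essential annuli, and the unknot exterior is boundary-reducible.
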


More generally, hyperbolicity of 3--manifolds with torus boundary can be characterized in purely topological terms.

\begin{theorem}[Hyperbolization Theorem {\cite{Thurston1982}}]\label{Thurston}
Let $M$ be a compact, orientable 3--manifold with nonempty torus boundary. Then the interior of $M$ admits a complete hyperbolic metric if and only if $M$ is irreducible, boundary-irreducible, atoroidal, and anannular.
\end{theorem}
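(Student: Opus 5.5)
Since this statement is Thurston's theorem, which the paper quotes from \cite{Thurston1982} and does not prove, I would only outline the standard route and rely on the literature for the deep step. The argument splits into necessity and sufficiency.

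\textbf{Necessity.} Suppose the interior of $M$ carries a complete finite-volume hyperbolic metric. Then its universal cover is $\mathbb{H}^3$, so $\pi_2(M)=0$. By the Sphere Theorem $M$ is irreducible. Each boundary torus corresponds to a rank-two cusp, whose peripheral subgroup $\mathbb{Z}^2$ is parabolic and injects into $\pi_1(M)\subset \mathrm{PSL}_2(\mathbb{C})$. Hence $M$ is boundary-irreducible.

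For atoroidality, take an incompressible torus $T$. It gives a subgroup $\mathbb{Z}^2\subset\pi_1(M)$. A discrete torsion-free $\mathbb{Z}^2$ in $\mathrm{PSL}_2(\mathbb{C})$ cannot consist of loxodromics, since commuting loxodromics share an axis and so generate a cyclic discrete group. Therefore it is parabolic and conjugate into a cusp subgroup. The standard homotopy-to-isotopy results of Waldhausen then show $T$ is boundary-parallel.

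Anannularity is handled the same way. The core curve of an essential annulus has both ends in cusps, so it is parabolic. Two cusp subgroups sharing a parabolic element must coincide, which forces the annulus to be inessential.

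\textbf{Sufficiency.} Assume $M$ is irreducible, boundary-irreducible, atoroidal and anannular. Because $\partial M\neq\emptyset$ and $M$ is irreducible, $M$ is Haken, so it has a Haken hierarchy ending in balls. I would induct along the hierarchy, at each stage producing a geometrically finite hyperbolic structure on the pieces, whose convex cores have boundary. The gluing step, realising $M$ from $M$ cut along an incompressible surface $S$, becomes the search for a fixed point of $\tau\circ\sigma$ on Teichmüller space. Here $\sigma$ is Thurston's skinning map and $\tau$ is the gluing involution.

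\textbf{Main obstacle.} The hard step is this gluing, namely Thurston's Bounded Image Theorem. It says the skinning map has bounded image when the cut manifold is acylindrical, or bounded modulo the remaining degenerations otherwise. One then needs the extra case where $M$ fibres over the circle, which requires the Double Limit Theorem. I would not reprove these. I would cite Thurston together with the expositions of Otal, McMullen and Kapovich for the full argument, and use Mostow--Prasad rigidity only for uniqueness of the resulting metric.
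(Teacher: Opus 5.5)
Your outline is correct and matches the paper, which gives no proof of this result and simply cites Thurston. Like the paper, you leave the deep sufficiency direction (skinning map, Bounded Image and Double Limit Theorems) to the literature, and your necessity sketch is the standard argument. One small correction there: irreducibility comes from Alexander's theorem applied to a lift of the sphere in $\mathbb{H}^3\cong\mathbb{R}^3$, not from the Sphere Theorem, which gives the converse implication.

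You should, however, state explicitly the finite-volume hypothesis you silently inserted in the necessity direction. It is what produces the rank-two cusps, and the statement as printed omits it. Without it the ``only if'' direction is false: the interiors of $S^1\times D^2$ and $T^2\times[0,1]$ carry complete infinite-volume hyperbolic metrics, yet the first is not boundary-irreducible and the second is not anannular. So the finite-volume version, which matches the paper's own definition of a hyperbolic link, is the one that holds.
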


We refer to \cite[Chapter 8]{Purcell2020} for the definitions of irreducible, boundary-irreducible, and anannular.

In particular, we obtain the key implication used throughout this article.

\begin{corollary}\label{cor:essential_torus_obstruction}
If the exterior $M_L$ of a link $L \subset S^3$ contains an essential torus, then $L$ is not hyperbolic.
\end{corollary}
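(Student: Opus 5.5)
The corollary follows from the Hyperbolization Theorem (Theorem~\ref{Thurston}) by contraposition. Let $L\subset S^3$ be a link and let $T\subset M_L$ be an essential torus. Suppose, for contradiction, that $L$ is hyperbolic, so that the interior of $M_L$ carries a complete hyperbolic metric of finite volume.

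First we check that the hypotheses of Theorem~\ref{Thurston} hold for $M_L$. Take the closure of the exterior, namely $S^3$ minus an open tubular neighbourhood of $L$. This is a compact 3-manifold. It is orientable, since it is a submanifold of $S^3$. Its boundary is a disjoint union of tori, one for each component of $L$, and this boundary is nonempty because $L\neq\emptyset$. The interior of this compact manifold is homeomorphic to $S^3\setminus L$, which is the space on which the hyperbolic metric lives.

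Next we apply the "only if" direction of Theorem~\ref{Thurston}. Since the interior of $M_L$ admits a complete hyperbolic metric, $M_L$ is irreducible, boundary-irreducible, atoroidal and anannular. In particular $M_L$ is atoroidal, which by definition means that $M_L$ contains no essential torus.

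This contradicts the existence of $T$. Here $T$ is essential in exactly the sense of Definition~\ref{essential}: it is incompressible and not isotopic to a component of $\partial M_L$. This is the notion of essential torus whose absence defines atoroidality. Hence $L$ is not hyperbolic.

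The only point that needs care is matching the conventions. One convention concerns the open versus compact exterior: the hyperbolic metric is defined on the open manifold $S^3\setminus L$, while Theorem~\ref{Thurston} is stated for compact manifolds. The other concerns the definition of "atoroidal" used in Theorem~\ref{Thurston} and in \cite{Purcell2020}: it must coincide with "no incompressible, non-boundary-parallel tori," and not with some stronger or weaker notion. Both points are resolved by the standard identifications already used above.
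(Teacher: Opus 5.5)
Your proposal is correct and follows the paper's route: the paper gives no separate proof and presents this corollary as an immediate consequence of Theorem~\ref{Thurston}, since a hyperbolic link exterior is atoroidal and atoroidal is defined as containing no essential torus, which is exactly the contrapositive argument you give. Your checks on compact versus open exteriors and on the meaning of atoroidal are appropriate; they just make explicit what the paper takes for granted.
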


Thus, detecting essential tori provides an effective and conceptually simple method to prove non-hyperbolicity.

\subsection{JSJ Decomposition and Topological Structure}

Essential tori play a fundamental role in the JSJ decomposition of 3--manifolds. Roughly speaking, the JSJ decomposition expresses a compact, orientable, irreducible 3--manifold as a union of simpler pieces separated by a minimal collection of disjoint essential tori.

In the case of link complements, these tori encode the internal topological structure of the manifold and detect non-trivial decompositions into simpler components.

\begin{remark}
Although essential tori form the building blocks of the JSJ decomposition, in this paper we do not attempt to determine the full JSJ decomposition of the link complement. Instead, our goal is to provide effective criteria to detect the existence of essential tori.

From this point of view, the existence of an essential torus indicates that the link is not hyperbolic and reflects a more intricate topological structure, typically associated with satellite-type behavior.

More generally, essential tori encode structural information about the link complement and reveal the presence of non-trivial decompositions. Thus, detecting essential tori can be viewed as a way to extract qualitative topological information about the link, going beyond the mere distinction between hyperbolic and non-hyperbolic cases.
\end{remark}
\subsection{Mixed polynomials}
A \textit{mixed polynomial} in two complex variables $z=(u, v)$ is a formal expression $f(u, \bar{u}, v, \bar{v})$ that admits a polynomial expansion in both the complex variables $z = (u, v)$ and their conjugates $\bar{z} = (\bar{u}, \bar{v})$. Specifically, it is given by the following sum:
\begin{equation}
f(u, \bar{u}, v, \bar{v}) = \sum_{\boldsymbol{\nu}, \boldsymbol{\mu}} c_{\boldsymbol{\nu}, \boldsymbol{\mu}} z^{\boldsymbol{\nu}} \bar{z}^{\boldsymbol{\mu}}, \quad c_{\boldsymbol{\nu}, \boldsymbol{\mu}} \in \mathbb{C}
\end{equation}
where $\boldsymbol{\nu} = (\nu_1, \nu_2)$ and $\boldsymbol{\mu} = (\mu_1, \mu_2)$ are multi-indices in $\mathbb{N}^2$. Throughout this work, we assume that the mixed polynomial is normalized such that $c_{\bm0,\bm0} = 0$.
For the sake of simplicity, we shall employ the same symbol $f$, or simply the notation $f(u,v)$, to refer to both the mixed polynomial expression and its associated complex-valued function $f: \mathbb{C}^2 \to \mathbb{C}$ defined by evaluation $(u,v) \mapsto f(u,\bar{u},v,\bar{v})$, leaving the dependence on complex conjugates implicit unless required for clarity. By identifying $\mathbb{C}^2$ with $\mathbb{R}^4$ via the coordinates $u = x_1 + \rmi x_2$ and $v = x_3 + \rmi x_4$, the function $f$ induces a real analytic map $\tilde{f}: \mathbb{R}^4 \to \mathbb{R}^2$ defined by:
\begin{equation}
\bm x = (x_1, x_2, x_3, x_4) \mapsto \tilde{f}(\bm x) = \left( \re (f(u, \bar{u}, v, \bar{v})), \im(f(u, \bar{u}, v, \bar{v})) \right).
\end{equation}

The mixed variety is defined as:
\begin{equation}
V(f) := \{ (u, v) \in \mathbb{C}^2 : \tilde{f}(\bm x) = 0 \}.
\end{equation}

The \textit{singular set} $\Sigma(f)$ of the mixed polynomial function $f$ is defined as the set of points where the induced map $\tilde{f}$ fails to be a submersion:
\begin{equation}
\Sigma(f) := \{ (u, v) \in \mathbb{C}^2 : \mathrm{rank}(J\tilde{f}(\bm x)) < 2 \}
\end{equation}
where $J \tilde{f}(\bm x)$ denotes the Jacobian matrix of $\tilde{f}$.

\subsection{The Newton polygon}\label{sec:Newton}

The \textit{support} of a mixed polynomial $f$ is defined, in \cite{Oka2010},  by 
\[\mathrm{supp} (f):=\{\boldsymbol{\nu}+\boldsymbol{\mu} \in (\mathbb{Z}_{\geq 0})^2 \mid c_{\boldsymbol{\nu},\boldsymbol{\mu}}\neq 0 \}.\]

The \textit{Newton polygon} of $f$ is defined as the convex hull of
\[ \bigcup_{ \bm{\omega} \in \rm{supp}(f)} \{ \bm{\omega} + (\mathbb{R}_{\geq 0})^2\} ,\]
where + denotes the Minkowski sum. The \textit{Newton boundary}  $\Gamma(f)$ is defined as the union of compact faces of $\Gamma_+(f)$. For a face $\Delta$ of $\Gamma(f)$ we define 
\begin{equation*}
	f_{\Delta}(u, \bar{u}, v, \bar{v}) := \sum_{\boldsymbol{\nu}+\boldsymbol{\mu} \in \Delta} c_{\boldsymbol{\nu},\boldsymbol{\mu}} z^{\boldsymbol{\nu}} \bar{z}^{\boldsymbol{\mu}}.
\end{equation*}
\begin{definition}[{\cite{Oka2010}}]\label{def:convandnd}
Let $f:\mathbb{C}^2 \to \mathbb{C}$ be a mixed polynomial with Newton polygon $\Gamma(f)$.
\begin{itemize}
    \item We say that $f$ is \textit{convenient} if $\Gamma(f)$ intersects all coordinate axes.
    \item 
We say that $f$ is \textit{Newton non-degenerate} (or simply \textit{non-degenerate}) if, for every face $\Delta$ in $\Gamma(f)$, the following condition is satisfied:
$$\Sigma(f_\Delta) \cap V(f_\Delta) \cap (\mathbb{C}^*)^2 = \emptyset.$$ 
\end{itemize}
\end{definition}
\begin{remark}[Non-degeneracies]\label{rmk:ndconditions}
\hspace{2.0cm} 
\begin{itemize}
    \item[(i)]  Every convenient and non-degenerate mixed polynomial $f$ has a weakly isolated singularity at the origin, that is, $\Sigma(f) \cap V(f) \cap U =  \{0\}$ for some neighbourhood $U$ of the origin \cite{Oka2010}.  
    \item[(ii)] In this paper, we present our results within the set of links of convenient and non-degenerate mixed polynomials. This approach maintains the generality of the settings in \cite{Araujo2024,Bode:part2} regarding inner non-degeneracy, as any such polynomial can be deformed into one that is both convenient and non-degenerate while preserving its link type \cite[Proposition 4.31]{Leal2025}. 
\end{itemize}
\end{remark}
Let $f$ be a mixed polynomial. We define the set $\mathcal{P}(f)$ as the collection of 1-faces (edges) of the Newton boundary $\Gamma(f)$, labeled sequentially as $\Delta^1_1, \dots, \Delta^1_N$ following the Newton boundary from left to right\footnote{Equivalently, $\mathcal{P}(f)$ can be defined as the collection of positive weight vectors $P_i$ associated with each face $\Delta^1_i$, such that $P_i$ is orthogonal to the linear subspace parallel to $\Delta^1_i$ (see~\cite{Araujo2024}).}.  Similarly, the set of 0-faces (vertices) is denoted by $\mathcal{P}_0(f) := \{ \Delta_0, \Delta_1, \dots, \Delta_N \}$, where $\Delta_i := \Delta^1_i \cap \Delta^1_{i+1}$, $i = 1, \dots, N-1$, denote the non-extreme vertices, while $\Delta_0$ and $\Delta_N$ denote the extreme vertices, i.e., the vertices that lie on the coordinate axes.
%{\color{red}Benjamin:I moved the footnote to the end of the sentence. Initially, I thought $^1\mathcal{P}(f)$ was some mathematical notation.}
\begin{definition}[{\cite{Araujo2024}}]\label{def:Gammaniceness}
Let \( f:\mathbb{C}^2 \to \mathbb{C} \) be a mixed polynomial. We say that $f$ is \textit{$\Gamma$-nice} if for every non-extreme vertex $\Delta$ in $\mathcal{P}_0(f)$, 
\( V(f_{\Delta}) \cap (\mathbb{C}^*)^2= \emptyset.\)
\end{definition}
\begin{remark}\label{rmk:ndconditions} 
 In \cite{Bode:part2}, the set of links of a wider class of $\Gamma$-nice mixed polynomials, which includes the convenient and non-degenerate ones, was completely characterized via certain symmetry conditions.
\end{remark}
\begin{remark}[$\Gamma$-niceness]\label{rmk:niceness}
Let $\Delta=(a,b)$ be a vertex of the Newton boundary of $f$, that is, $\Delta \in \mathcal{P}_0(f)$. 
\begin{itemize} 
    \item[(i)] A mixed polynomial $f(u, \bar{u},v,\bar{v})$ is said to be \textit{$x$-semiholomorphic}, for $x \in \{u, \bar{u}, v, \bar{v}\}$, if $f$ is algebraically independent of $\bar{x}$, where $\overline{(\bar{u})}=u$ and $\overline{(\bar{v})}=v$. For instance, if $f$ is $u$-semiholomorphic, then $\mu_1=0$ for any monomial $c_{\nu_1,\nu_2,\mu_1,\mu_2}u^{\nu_1}v^{\nu_2}\bar{u}^{\mu_1}\bar{v}^{\mu_2}$ of $f$ with $c_{\nu_1,\nu_2,\mu_1,\mu_2}\neq 0$. Analogously, being $\bar{u}$-, $v$-, or $\bar{v}$-semiholomorphic implies that $\nu_1$, $\mu_2$, or $\nu_2$ must vanish, respectively. In \cite{Araujo2024} it was proved that if $f_{\Delta}$ is semiholomorphic then \( V(f_\Delta) \cap (\mathbb{C}^*)^2= \emptyset\) if and only if $f_\Delta$ is non-degenerate.  
\item[(ii)] In general, the non-degeneracy of $f_\Delta$ does not guarantee that $V(f_\Delta) \cap (\mathbb{C}^*)^2$ is empty (see \cite{Araujo2024,Leal2025}). 
\item[(iii)] Let $u=R \rme^{\rmi \varphi}$ and $v=r \rme^{\rmi t}$. We have:
$$f_\Delta(R \rme^{\rmi \varphi}, R \rme^{-\rmi \varphi}, r \rme^{\rmi t}, r \rme^{-\rmi t}) = R^a r^b f_\Delta(\rme^{\rmi \varphi}, \rme^{-\rmi \varphi}, \rme^{\rmi t}, \rme^{-\rmi t}).$$
If $V(f_\Delta) \cap (\mathbb{C}^*)^2 = \emptyset$, it follows that the set $\{(\varphi, t) \in [0, 2\pi]^2 : f_\Delta(\rme^{\rmi \varphi}, \rme^{-\rmi \varphi}, \rme^{\rmi t}, \rme^{-\rmi t}) = 0\}$ is empty. Consequently, if $f$ is $\Gamma$-nice, convenient, and non-degenerate, then for any $\Delta_i$ in $\mathcal{P}_0(f)$ the set
$$\{(\varphi, t) \in [0, 2\pi]^2 : f_{\Delta_i}(\rme^{\rmi \varphi}, \rme^{-\rmi \varphi}, \rme^{\rmi t}, \rme^{-\rmi t}) = 0\}$$
is empty. Indeed, for $i=1, \dots, N-1$, this follows directly from the $\Gamma$-niceness condition. For $i=0$ and $i=N$, since $f_{\Delta_0}$ and $f_{\Delta_N}$ are semiholomorphic and non-degenerate, (i) ensures that $V(f_{\Delta_i}) \cap (\mathbb{C}^*)^2$ is empty.
\end{itemize}
\end{remark}
\begin{example}\label{ex:example1}
Consider the convenient and non-degenerate mixed polynomial
\[
f(u,\bar{u},v,\bar{v}) = u^5 + u^2 \bar{u}^2 v + u^3 v^2 - \mathrm{i} u \bar{u}^2 v^2 + u^2 \bar{u} v^2 + \bar{u} v^6 + v^9.
\]  
The support is explicitly given by:
\begin{equation*}
\mathrm{supp}(f) = \{ (5,0), (4,1), (3,2), (1,6), (0,9) \}.
\end{equation*}
\begin{figure}[h!]
 \centering
 \begin{tikzpicture}[scale=0.60]
\begin{axis}[axis lines=middle,axis equal,yticklabels={0,,2,4,6,8,10}, xticklabels={0,,2,4,6,8,10},domain=-10:10,     xmin=0, xmax=10,
                    ymin=0, ymax=9,
                    samples=1000,
                    axis y line=center,
                    axis x line=center]
\addplot coordinates{(11,0)(5,0) (3,2) (1,6) (0,9) (0,10)};
 \filldraw[red] (10,85) node[anchor=north ] {$\Delta^1_1$};
  \filldraw[red] (25,50) node[anchor=north ] {$\Delta^1_2$};
   \filldraw[red] (45,20) node[anchor=north ] {$\Delta^1_3$};
  \filldraw[black] (65,65) node[anchor=north ] {$\Gamma_+(f)$};
\filldraw[blue] (40,10) circle (2pt) node[anchor=south west] {};
\filldraw[blue] (30,20) circle (2pt) node[anchor=south west] {};
\filldraw[blue] (0,90) circle (2pt) node[anchor=south west] {};
\filldraw[blue] (10,60) circle (2pt) node[anchor=south west] {};
\filldraw[blue] (50,0) circle (2pt) node[anchor=south west] {};
\fill[yellow!90,nearly transparent] (0,90) --(10,60)--(30,20) -- (50,0) -- (260,0) -- (260,300) -- (0,300) --cycle;
\end{axis}
\end{tikzpicture}
\caption{The Newton polygon $\Gamma_+(f)$.}
\label{newtonboundaryholomorphicq}
\end{figure}
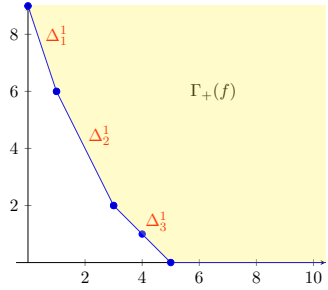
The set of edges of $\Gamma(f)$, see Figure~\ref{newtonboundaryholomorphicq}, is
$\mathcal{P}(f) = \{ \Delta^1_1, \Delta^1_2, \Delta^1_3 \}.$

The face functions are: 
\begin{align*}
f_{\Delta^1_1}(u, \bar{u}, v, \bar{v}) &= \bar{u} v^6 + v^9, \\
f_{\Delta^1_2}(u, \bar{u}, v, \bar{v}) &= (u^3 - \mathrm{i} u \bar{u}^2 + u^2 \bar{u})v^2 + \bar{u} v^6, \\
f_{\Delta^1_3}(u, \bar{u}, v, \bar{v}) &= u^5 + u^2 \bar{u}^2 v + (u^3 - \mathrm{i} u \bar{u}^2 + u^2 \bar{u})v^2. 
\end{align*}
The vertices of the Newton boundary are 
\(\Delta_0 = (0,9), \ \Delta_1 = (1,6), \ \Delta_2 = (3,2)\), and  \(\Delta_3 = (5,0)\), 
with associated face functions:
\begin{align*}
f_{\Delta_0}(u, \bar{u}, v, \bar{v}) &= v^9, \\
f_{\Delta_1}(u, \bar{u}, v, \bar{v}) &= \bar{u} v^6, \\
f_{\Delta_2}(u, \bar{u}, v, \bar{v}) &= (u^3 - \mathrm{i} u \bar{u}^2 + u^2 \bar{u}) v^2, \\
f_{\Delta_3}(u, \bar{u}, v, \bar{v}) &= u^5.
\end{align*}
Since $f_{\Delta_1}$ and $f_{\Delta_2}$ are semiholomorphic, 
by Remark~\ref{rmk:niceness}(i), it follows that $f$ is $\Gamma$-nice. 
\end{example}

\subsection{Nested decomposition of links of mixed singularities}

Let $f$ be a convenient and non-degenerate mixed polynomial that is $\Gamma$-nice.
In the convenient case, the number of 1-faces (denoted by $N$) of $\Gamma(f)$ is non-zero. Then the Newton boundary has $N+1$ vertices, $\Delta_0, \Delta_1, \dots, \Delta_{N}$, ordered so that $\Delta_0$ is contained in the vertical axis and for $i=1,2,\dots,N$, the vertices $\Delta_{i-1}$ and $\Delta_{i}$ bound the same 1-face $\Delta^1_{i} \in  \mathcal{P}(f)$.

Fix $\Delta^1_{i} \in \mathcal{P}(f)$. We define $g_i: \mathbb{C}\times S^1 \to \mathbb{C}$ by:
\begin{equation}
    g_i(u,\rme^{\rmi t})=f_{\Delta^1_i}(u,\bar{u},\rme^{\rmi t},\rme^{-\rmi t}).\label{eq:def_gi}
\end{equation}
For $i=1,2,\dots,N$, define 
$$L_i:= g_i^{-1}(0)\cap (\mathbb{C}^* \times S^1).$$
\begin{lemma}\label{lemma:Li}
  Let \(f:\mathbb{C}^2 \to \mathbb{C}\) be a convenient and non-degenerate mixed polynomial that is \(\Gamma\)-nice. Fix \(i \in \{1,2,\dots,N-1\}\). Then,  
   the set $L_i$ is a compact set in $\mathbb{C}^* \times S^1$. 
\end{lemma}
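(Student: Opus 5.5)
We will show that $L_i$ is closed in $\mathbb{C}\times S^1$ and that its modulus $|u|$ is bounded above by some $R_{\max}<\infty$ and below by some $R_{\min}>0$. Then $L_i$ is a closed subset of the compact annulus-product $\{R_{\min}\le |u|\le R_{\max}\}\times S^1$, which lies in $\mathbb{C}^*\times S^1$, so $L_i$ is compact.

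\emph{Closedness.} The set $g_i^{-1}(0)$ is closed in $\mathbb{C}\times S^1$ because $g_i$ is continuous. Once the bound $|u|\geq R_{\min}>0$ is established on $g_i^{-1}(0)\cap(\mathbb{C}^*\times S^1)$, no limit point of $L_i$ can have $u=0$. Hence $L_i$ coincides with the closed set $g_i^{-1}(0)\cap\{|u|\ge R_{\min}\}$.

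\emph{The bounds.} Write $\Delta^1_i$ with endpoints $\Delta_{i-1}=(a_{i-1},b_{i-1})$ and $\Delta_i=(a_i,b_i)$, with $a_{i-1}<a_i$ since edges are ordered left to right. Every monomial of $f_{\Delta^1_i}$ has $u$-degree $a=\nu_1+\mu_1\in[a_{i-1},a_i]$. Setting $u=R\rme^{\rmi\varphi}$ and $|v|=1$ gives
\[
g_i(R\rme^{\rmi\varphi},\rme^{\rmi t})=R^{a_i}f_{\Delta_i}(\rme^{\rmi\varphi},\rme^{-\rmi\varphi},\rme^{\rmi t},\rme^{-\rmi t})+\sum_{a_{i-1}\le a<a_i}R^{a}\,h_a(\varphi,t),
\]
where the $h_a$ are bounded trigonometric polynomials. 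By Remark~\ref{rmk:niceness}(iii), $f_{\Delta_i}$ restricted to the torus $\{|u|=|v|=1\}$ has no zeros. Here one uses $\Gamma$-niceness at the vertex $\Delta_i$ when $i\le N-1$, and semiholomorphicity plus non-degeneracy when $i=N$; in the statement $i\le N-1$, but the vertex $\Delta_{i-1}$ may be $\Delta_0$, which is also covered. By compactness of the torus, $|f_{\Delta_i}|\ge c>0$ there. Then for $R$ large,
\[
|g_i|\ge cR^{a_i}-CR^{a_i-1}>0,
\]
using that the exponents are integers. This gives $R_{\max}$.

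Symmetrically, the lowest-degree part in $R$ is $R^{a_{i-1}}f_{\Delta_{i-1}}$, and $|f_{\Delta_{i-1}}|\ge c'>0$ on the torus, again by Remark~\ref{rmk:niceness}(iii). Dividing by $R^{a_{i-1}}$ gives
\[
|g_i|/R^{a_{i-1}}\ge c'-C'R>0
\]
for $R$ small, which gives $R_{\min}$.

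The main point needing care is that the vertex face functions are exactly the extreme-degree parts in $R$ of the edge face function. This holds because the $u$-degree is affine along the edge and attains its extremes exactly at the endpoints. It is also essential that nonvanishing of the vertex face functions on the torus is exactly what Remark~\ref{rmk:niceness}(iii) provides. The compactness of the torus $[0,2\pi]^2$ turns this nonvanishing into the uniform positive lower bounds $c,c'$ used above.
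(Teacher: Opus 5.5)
Your proof is correct and follows the same route as the paper. The top and bottom $R$-degree parts of $g_i$ are $R^{a_i}f_{\Delta_i}$ and $R^{a_{i-1}}f_{\Delta_{i-1}}$, which are nonvanishing on the torus by Remark~\ref{rmk:niceness}(iii), so the zeros are confined to $\{R_{\min}\le |u|\le R_{\max}\}\times S^1$. Your explicit estimates via compactness of the torus and your closedness argument only make precise what the paper leaves implicit.
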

     \begin{proof}
     Let $u=R\rme^{\rmi \varphi}$. The terms of $f_{\Delta^1_i}(R\rme^{\rmi \varphi}, R\rme^{-\rmi \varphi}, \rme^{\rmi t}, \rme^{-\rmi t})$ with minimal and maximal degrees in $R$ are the terms of $f_{\Delta_{i-1}}(R\rme^{\rmi \varphi}, R\rme^{-\rmi \varphi}, \rme^{\rmi t}, \rme^{-\rmi t})$ and $f_{\Delta_{i}}(R\rme^{\rmi \varphi}, R\rme^{-\rmi \varphi}, \rme^{\rmi t}, \rme^{-\rmi t})$, respectively. Let $\Delta_i = (a_i, b_i)$; then:
    \[
    f_{\Delta_{i}}(R\rme^{\rmi \varphi}, R\rme^{-\rmi \varphi}, \rme^{\rmi t}, \rme^{-\rmi t}) = R^{a_i} f_{\Delta_{i}}(\rme^{\rmi \varphi}, \rme^{-\rmi \varphi}, \rme^{\rmi t}, \rme^{-\rmi t}).
    \]
    According to Remark~\ref{rmk:niceness}(iii), $f_{\Delta_{i}}(\rme^{\rmi \varphi}, \rme^{-\rmi \varphi}, \rme^{\rmi t}, \rme^{-\rmi t})$ does not vanish for any $\varphi$ or $t$. This ensures that the zeros of $g_i$ are contained within $\mathbb{D}_{\varepsilon} \times S^1$, where $\mathbb{D}_\varepsilon$ is a sufficiently large disk centered at the origin. On the other hand, since Remark~\ref{rmk:niceness}(iii) also implies that $f_{\Delta_{i-1}}(\rme^{\rmi \varphi}, \rme^{-\rmi \varphi}, \rme^{\rmi t}, \rme^{-\rmi t})$ is non-vanishing for all $\varphi$ and $t$, it follows that $g_i$ has no zeros in $(\mathbb{D}_{\varepsilon'} \setminus \{0\}) \times S^1$ for a sufficiently small disk $\mathbb{D}_{\varepsilon'}$ centered at the origin. Therefore, we get that $g_i^{-1}(0)\cap (\mathbb{C}^* \times S^1)$ is compact in $\mathbb{C}^* \times S^1$ as desired. 
\end{proof}
\begin{example}\label{Ex:ex2gis}
Let $f$ be the mixed polynomial in Example~\ref{ex:example1}: 
\[
f(u,\bar{u},v,\bar{v}) = u^5 + u^2 \bar{u}^2 v + u^3 v^2 - \mathrm{i} u \bar{u}^2 v^2 + u^2 \bar{u} v^2 + \bar{u} v^6 + v^9.
\]
The maps $g_i$ associated with each 1-face are:
\begin{align*}
g_ 1(u, \rme^{\rmi t}) &= \bar{u} \rme^{6\rmi t} + \rme^{9\rmi t}, \\
g_2(u, \rme^{\rmi t}) &= (u^3 - \mathrm{i} u \bar{u}^2 + u^2 \bar{u}) \rme^{2\rmi t} + \bar{u} \rme^{6 \rmi t}, \\
g_3(u, \rme^{\rmi t}) &= u^5 + u^2 \bar{u}^2 \rme^{\rmi t} + (u^3 - \mathrm{i} u \bar{u}^2 + u^2 \bar{u}) \rme^{2\rmi t}. 
\end{align*}
 We present the illustrations in Figure~\ref{fig:linksofgis} for the link $L_i$ ($i=1, 2, 3$), which successfully capture the essential topological properties sought in our analysis.

\begin{figure}[h]
    \centering
    \begin{subfigure}[b]{0.30\textwidth}
        \includegraphics[width=\textwidth]{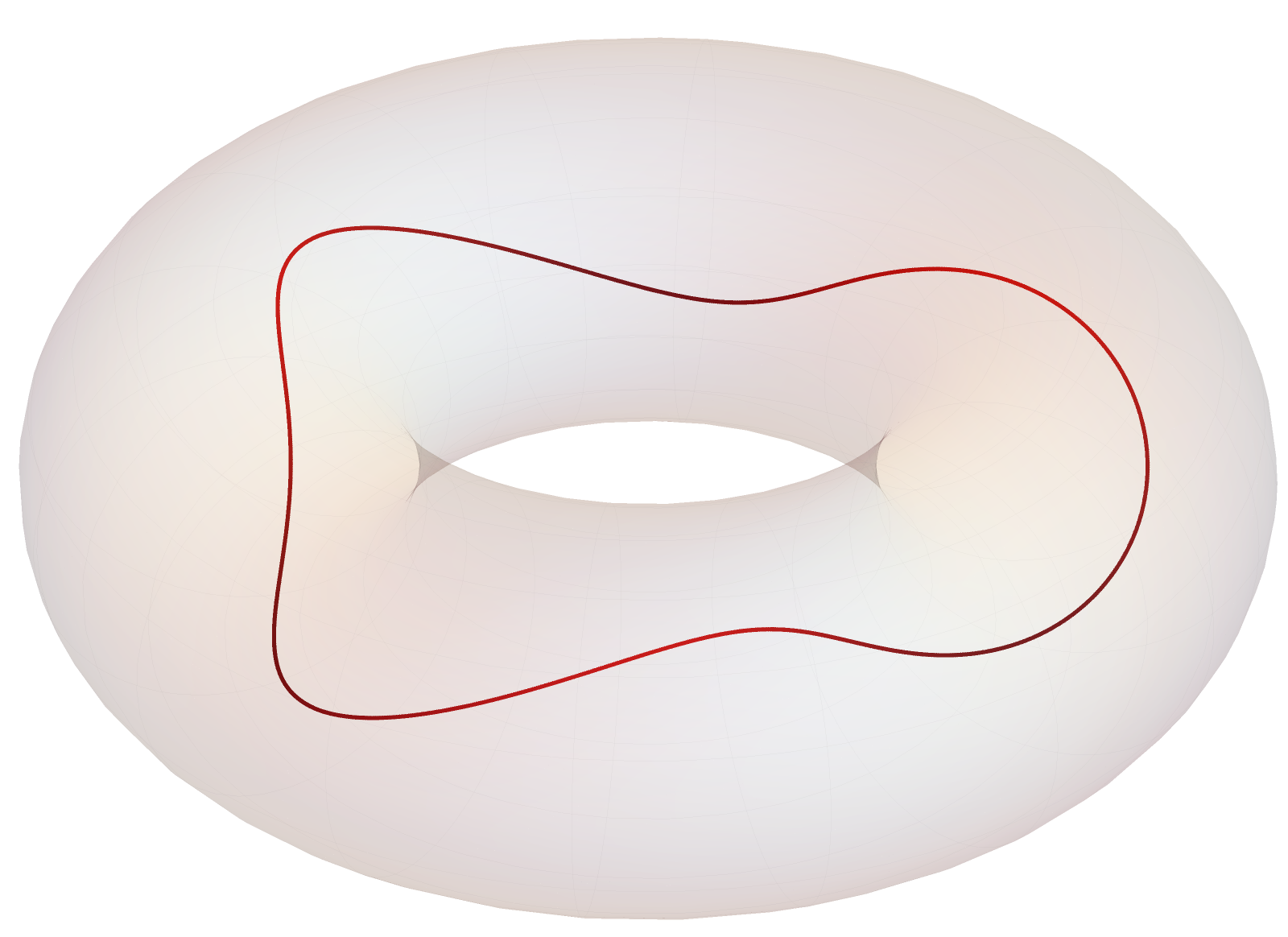}
        \caption{$L_1$}
    \end{subfigure}
    \hfill
    \begin{subfigure}[b]{0.30\textwidth}
        \includegraphics[width=\textwidth]{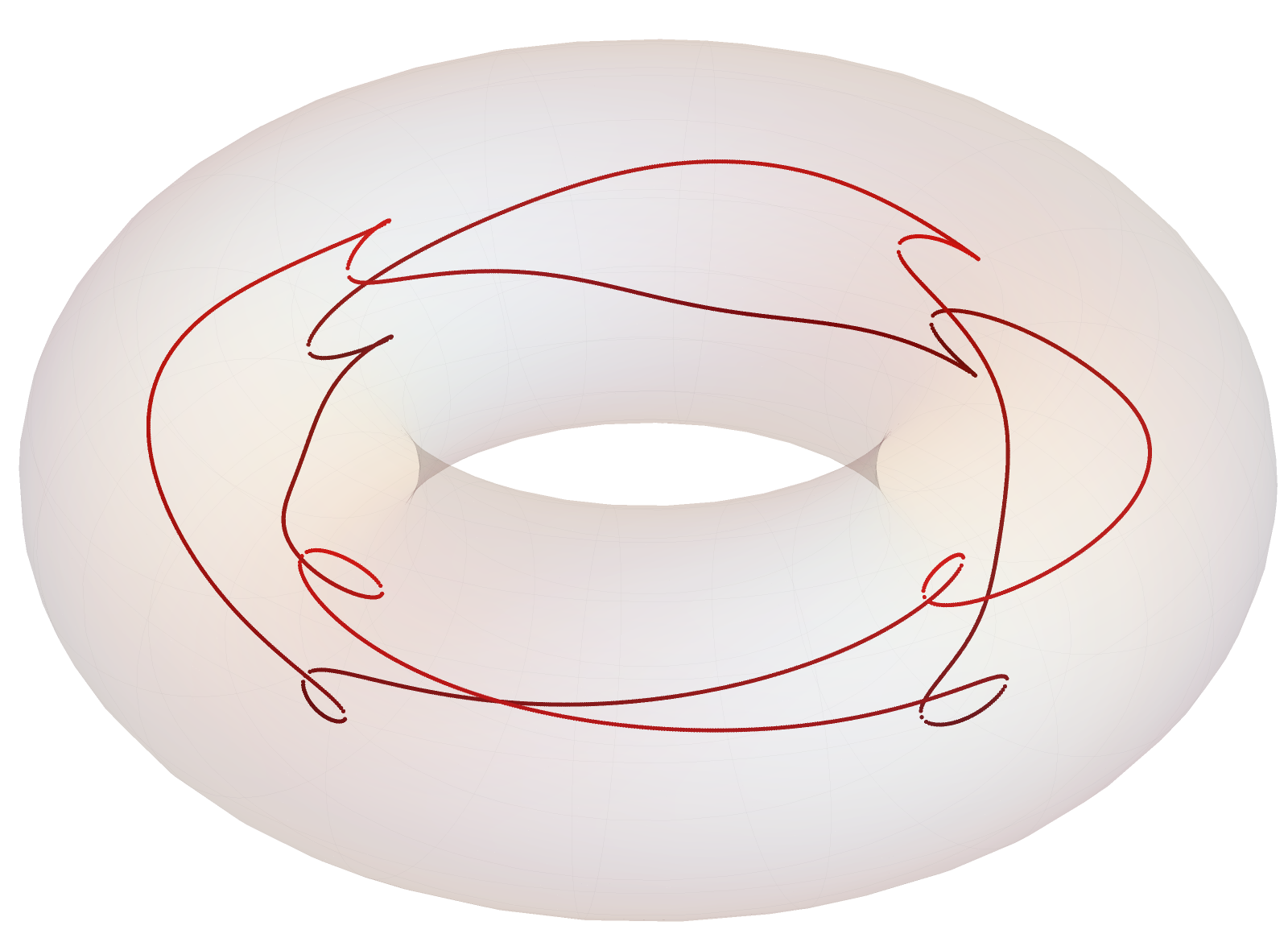}
        \caption{$L_{2}$}
    \end{subfigure}
    \hfill
    \begin{subfigure}[b]{0.30\textwidth}
        \includegraphics[width=\textwidth]{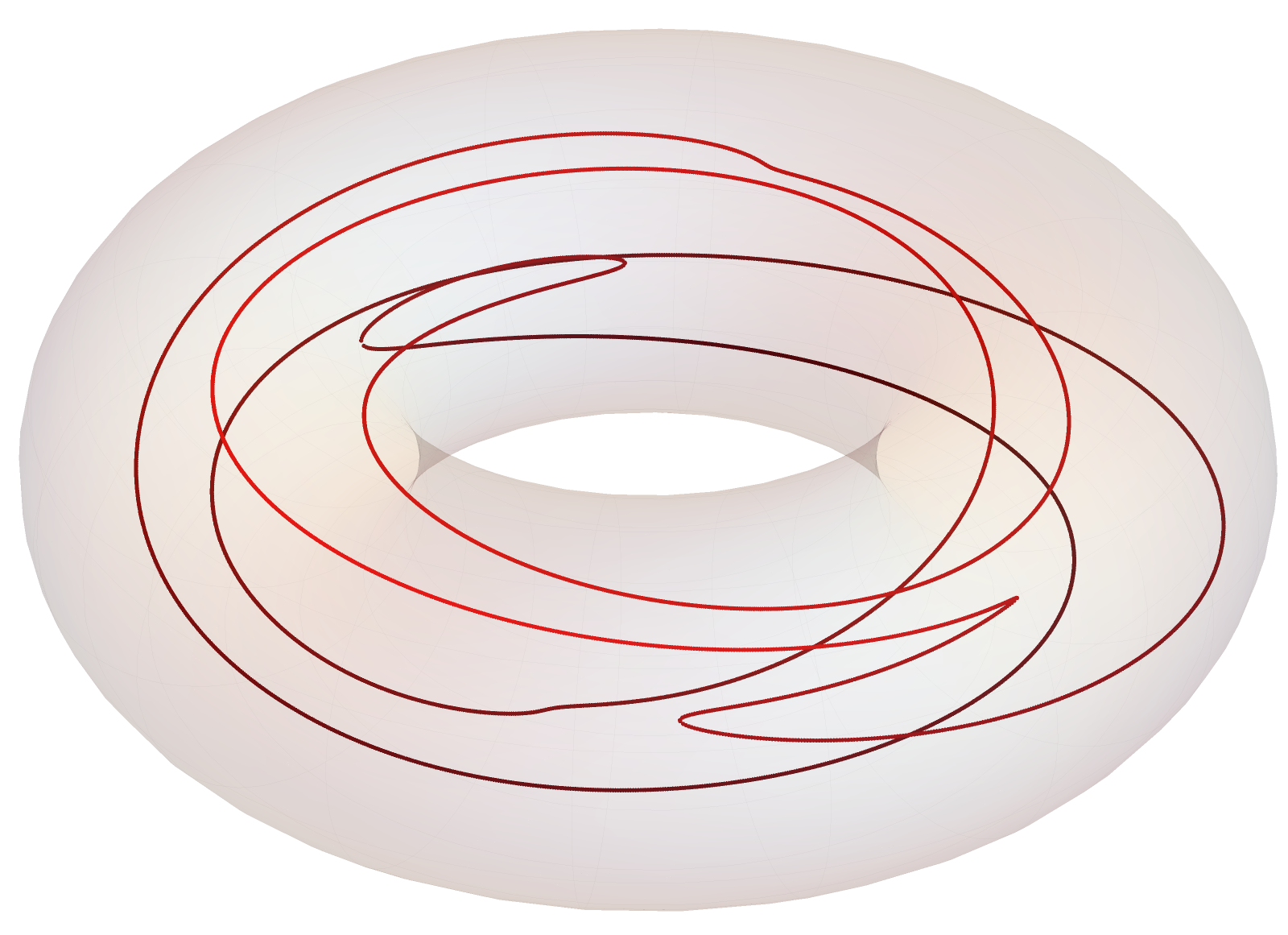}
        \caption{$L_{3}$}
    \end{subfigure}
    \caption{Links associated with $\Delta^1_1, \Delta^1_2$, and $\Delta^1_3$.}
    \label{fig:linksofgis}
\end{figure}
\end{example}

We take $0< \varepsilon_1< \varepsilon_2<\cdots< \varepsilon_{N-1}$ small and define
\[
\begin{aligned}
&X_{1} := \{(u,v) \in S^3 : 0<|u| \leq \varepsilon_1\},\\
&X_{i} := \{(u,v) \in S^3 : \varepsilon_{i-1} < |u| \leq \varepsilon_i\},
\qquad i =2,3,\ldots, N-1,\\
&X_{N} := \{(u,v) \in S^3 :  \varepsilon_{N-1}<|u| <1 \}.
\end{aligned}
\]
Define $\alpha:=\{(u,v) \in S^3: u=0 \}$ and $\beta:=\{(u,v) \in S^3: v=0 \}$. Note that for every $i=1,2,\ldots,N$ there exists a diffeomorphism from the interior $X_i$ to $\mathbb{C}^*\times S^1$ that preserves the arguments of both complex variables.

Since the links $L_i$, $i=1,2,\ldots,N$ are sets of curves in $\mathbb{C}^*\times S^1$, this produces a canonical way in which we can interpret each $L_i$ as a link in the interior of $X_i$.

Note that $S^3=\left(\bigcup_{i=1}^NX_i\right)\cup\alpha\cup\beta$. 
\begin{definition}
We write $\mathbf{L}([L_1,L_2,\ldots,L_{N-1}],L_N)$ 
for the union 
\begin{equation}
    \bigcup_{i=1}^{N}L_i\subset \bigcup_{i=1}^N X_i\subset S^3.
\end{equation}
\end{definition}
The notation $\mathbf{L}([L_1,L_2,\ldots,L_{N-1}],L_N)$ is due to an alternative, but equivalent, description given in \cite{Araujo2024}. Note that the isotopy class of the link $\mathbf{L}([L_1,L_2,\ldots,L_{N-1}],L_N)$ in $S^3$ depends only on the isotopy classes of $L_i$ in $X_i$.

Illustrations for this construction are provided in \cite{Araujo2024, Bode:part2}

The following result is a restatement of \cite[Theorem 1.2]{Araujo2024}, formulated here using an equivalent  description of the link of a convenient and non-degenerate mixed polynomial. 
\begin{theorem}\label{th:linksofinnd}
Let $f:\mathbb{C}^2\to \mathbb{C}$ be a convenient and non-degenerate mixed polynomial. Then, the link of $f$, denoted by $L_f$, is isotopic to the link 
\begin{equation}
\mathbf{L}([L_1, L_2, \dots, L_{N-1}],L_N),
\end{equation}
where each $L_i$ corresponds to the link associated with the respective face of the Newton boundary.
\end{theorem}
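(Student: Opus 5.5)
The plan is to compare $V(f)\cap S^3_\rho$, region by region, with the zero sets of the face functions $f_{\Delta^1_i}$. To do this I will use a toric decomposition of a small sphere adapted to the Newton boundary, together with weighted rescalings. I will use the $\Gamma$-nice hypothesis throughout, as in Lemma~\ref{lemma:Li}, so that each $L_i$ is a compact link in $\mathbb{C}^*\times S^1$.

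\emph{Step 1 (an adapted sphere).} By convenience, $f_{\Delta_0}$ and $f_{\Delta_N}$ are pure powers (possibly mixed) of $v$ and of $u$, respectively, and they are non-degenerate. A standard Milnor-type argument (a Curve Selection Lemma applied to the weakly isolated singularity of Remark~\ref{rmk:ndconditions}(i), followed by integration of a vector field transverse to $V(f)$) then lets me replace the round sphere $S^3_\rho$ by any sufficiently small ``weighted'' sphere $\Sigma_\rho$. Such a $\Sigma_\rho$ is the boundary of a neighbourhood cut out by inequalities of the form $|u|^{a}\le c\,|v|^{b}$ along the Newton boundary. The link changes only by isotopy under this replacement, provided $V(f)$ is transverse to the whole interpolating family; transversality will come from the estimates in Step 2. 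The sphere $\Sigma_\rho$ decomposes into solid tori around the axes $\alpha$ and $\beta$, together with thickened tori $T^2\times I$ parametrised by a single real variable: the log-ratio $\lambda=\log|u|/\log|v|$ (equivalently, the level $|u|=R$). These thickened tori correspond to the regions $X_i$.

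\emph{Step 2 (local models).} Let $P_i=(p_i,q_i)$ be the weight vector of $\Delta^1_i$ and $d_i$ its weighted degree. In the part of $\Sigma_\rho$ where $\lambda$ is close to $p_i/q_i$, I substitute $u=s^{p_i}u'$, $v=s^{q_i}\rme^{\rmi t}$. This gives
\[
f=s^{d_i}\bigl(g_i(u',\rme^{\rmi t})+s\,h(u',t,s)\bigr)
\]
with $h$ bounded on compact sets. By non-degeneracy of $f_{\Delta^1_i}$, $0$ is a regular value of $g_i$ on $\mathbb{C}^*\times S^1$. Lemma~\ref{lemma:Li} says that $L_i$ lies in a compact annulus $\varepsilon'\le|u'|\le\varepsilon$. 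The implicit function theorem then shows that, for $s$ small, $V(f)$ in this region is a small normal graph over $L_i$, and hence is isotopic to $L_i\subset X_i$.

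In the transition regions, where $\lambda$ lies strictly between consecutive slopes, the same rescaling shows that the vertex term $f_{\Delta_i}$ strictly dominates all other monomials. By Remark~\ref{rmk:niceness}(iii), $|f_{\Delta_i}(\rme^{\rmi\varphi},\rme^{-\rmi\varphi},\rme^{\rmi t},\rme^{-\rmi t})|$ is bounded below, so $f\neq0$ there. Near $\alpha$ and $\beta$ the terms $f_{\Delta_0}$ and $f_{\Delta_N}$ dominate in the same way, so $\alpha$, $\beta$ and their tubular neighbourhoods miss $V(f)$.

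\emph{Step 3 (assembly).} Gluing the pieces, $V(f)\cap\Sigma_\rho$ consists of one copy of $L_i$ in each $X_i$, with the identifications given by the argument-preserving diffeomorphisms $\mathrm{int}\,X_i\cong\mathbb{C}^*\times S^1$. This is exactly $\mathbf{L}([L_1,\dots,L_{N-1}],L_N)$.

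The main obstacle is making the estimates in Step 2 uniform in $\lambda$ across the overlaps between a face region and its neighbouring vertex regions. One must show that the error terms stay small relative to whichever face or vertex term is dominant, for all $\lambda$ simultaneously, so that no zeros escape or appear at the seams and transversality holds throughout. To do this I would first compactify the parameter $\lambda\in[0,\infty]$ and argue by contradiction with a curve-selection (Puiseux-type) path $s\mapsto(u(s),v(s))$. Along such a path the leading exponents single out a unique face or vertex of $\Gamma(f)$, and non-degeneracy or $\Gamma$-niceness of that face or vertex then gives the contradiction.
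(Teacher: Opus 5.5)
Your proposal is correct, but it is not the paper's route. The paper gives no argument for this statement: it imports it from \cite{Araujo2024} (Theorem~1.2 there) and only translates that description of the link into the present notation with the slabs $X_i$. You instead give a self-contained proof, and it works.

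On each edge, the rescaling $u=s^{p_i}u'$, $v=s^{q_i}\rme^{\rmi t}$ gives $f=s^{d_i}(g_i+s\,h)$. The weighted Euler field $(p_iu,q_iv)$ lies in $\ker df_{\Delta^1_i}$ along $V(f_{\Delta^1_i})$ and is transverse to $\{|v|=1\}$. So non-degeneracy makes $0$ a regular value of $g_i$ on $\mathbb{C}^*\times S^1$. Hence, on a compact region $\{\varepsilon_i'\le|u'|\le\varepsilon_i\}\times S^1$ containing $L_i$ in its interior (Lemma~\ref{lemma:Li}), the zero set is a $C^1$-small perturbation of $L_i$.

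The curve-selection argument in your last paragraph is not a refinement: it is the actual proof that no zeros lie elsewhere, since the dominance estimate of Step~2 is only uniform for $\lambda$ at fixed distance from the slopes. It does close. Along $u=as^m+\cdots$, $v=bs^n+\cdots$ with $a,b\neq0$, $f\circ\gamma\equiv0$ forces $f_\Delta(a,b)=0$ for the face $\Delta$ supported by the weight $(m,n)$. A vertex is excluded by $\Gamma$-niceness (for $\Delta_0,\Delta_N$, by non-degeneracy). An edge $\Delta^1_i$ gives, by weighted homogeneity, the point $(a|b|^{-p_i/q_i},b/|b|)\in L_i$ lying on or outside the chosen annulus, which is impossible.

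What your route buys is visibility of exactly where each hypothesis enters. In particular, $\Gamma$-niceness is what empties the vertex regions and makes the $L_i$ compact. So you are right to assume it, even though the displayed statement omits it, because $\mathbf{L}$ is only defined in that setting. What the citation buys is brevity and the more general setting treated in \cite{Araujo2024}.

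Two simplifications are available. First, Step~1 can be dropped. The round sphere is already invariant under rotating both arguments, and $|u|/|v|^{p_i/q_i}$ is strictly increasing in $|u|$ along it. So the face regions are disjoint thickened tori ordered like the $X_i$, and $(u,v)\mapsto(u/|v|^{p_i/q_i},v/|v|)$ is exactly an argument-preserving, $|u|$-increasing identification of the kind used to define $\mathbf{L}$. Here $s$ becomes a function of $|u'|$ with derivative $O(s)$, which does not affect the perturbation argument. Second, inequalities $|u|^a\le c|v|^b$ do not cut out a neighbourhood of the origin; they describe regions inside the sphere. If you do want an adapted sphere, the boundary of a polydisc with radii $C\rho^{p_N/q_N}$ and $\rho$ is the natural choice.
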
 
In particular, the link of the singularity $L_f$ consists of various (possibly empty) sublinks $L_i\subset X_i$. We say that a collection of solid tori $V_i$, $i=1,2,\ldots,N-1$, in $S^3$ is \textit{nested} if $V_i\subset V_{i+1}$ for all $i$, and all $V_i$ have a common core. Defining the nested solid tori in $S^3$ as
$$V_i:=\left(\bigcup_{j=1}^{i}X_j\right)\cup\alpha,\ i=1,2,\dots,N-1,$$
we obtain that $\bigcup_{j=1}^i L_j$ is a link in the interior of $V_i$ and $\bigcup_{j=i+1}^N L_i$ is in $S^3\backslash V_i$. Therefore, $\partial V_i$ is an embedded torus in $S^3\backslash L_f$ for every $i=1,2,\ldots,N-1$.  
\begin{example}\label{ex:nestetorii}
Consider the convenient and non-degenerate mixed polynomial
\[
f(u,\bar{u},v,\bar{v}) = u^4 - u^3 v \bar{v} + u^2 v^3 \bar{v}^3 - u v^6 \bar{v}^6 + v^{10} \bar{v}^{10}.
\] 
The maps $g_i$ associated with each 1-face are:
\begin{align*}
g_ 1(u, \rme^{\rmi t}) &= -u+1, \\
g_2(u, \rme^{\rmi t}) &= u^2-u, \\
g_3(u, \rme^{\rmi t}) &= -u^3 +u^2, \\
g_4(u, \rme^{\rmi t}) &= u^4 -u^3. 
\end{align*}
In what follows, we illustrate the components $L_i$ ($i=1, \dots, 4$) of $L_f$ within its nested tori decomposition. 
\end{example}
\begin{figure}[H]
    \centering
    \begin{subfigure}[b]{0.49\textwidth}
        \includegraphics[width=\textwidth]{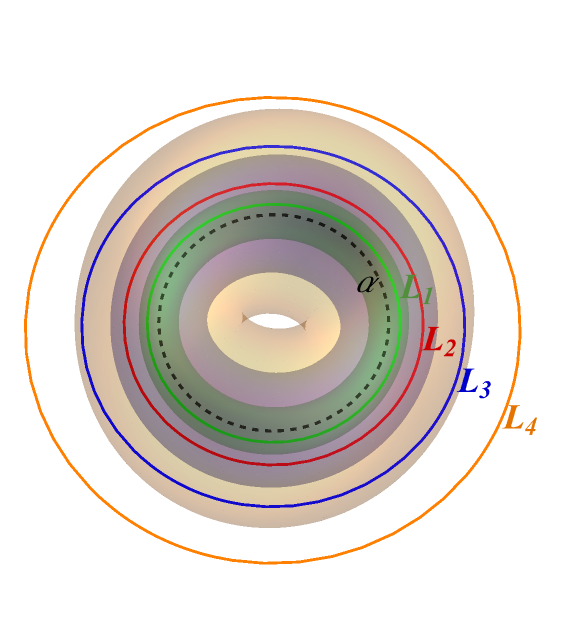}
        \caption{}
    \end{subfigure}
    %\hfill
    \begin{subfigure}[b]{0.49\textwidth}
        \includegraphics[width=\textwidth]{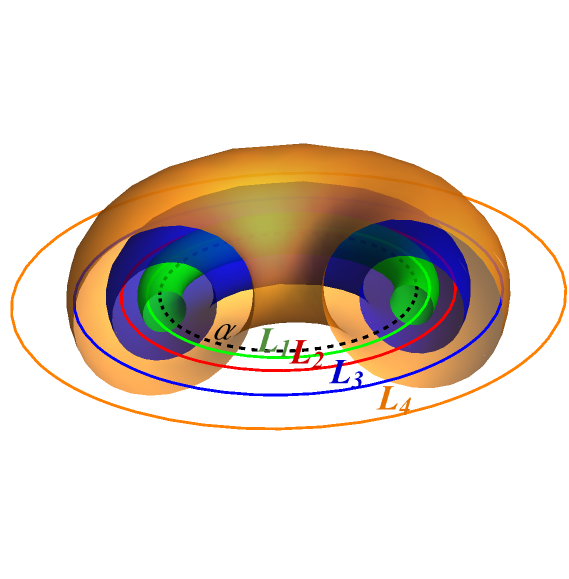}
        \caption{}
    \end{subfigure}
     \caption{Nested solid tori $V_1 \subset V_2 \subset V_3$ sharing a common core $\alpha$. The components $L_i$ ($i=1,\dots,4$) represent the decomposition of the link $L_f= \cup_{i=1}^4 L_i$ according to the Newton boundary.}
       \label{fig:nested}
\end{figure}
\section{Essential Tori in Nested Tori Link Complements}\label{sec:essential}

We begin by clarifying that, unless stated otherwise, all links $L_i$ are assumed to be nonempty. This assumption will be used throughout the section.

Let $L=L_1\cup\cdots\cup L_n$ be a link in the $3$-sphere $S^3$. Let $V_1,\ldots,V_{n-1}$ be
a collection of nested unknotted solid tori with a common core, satisfying
$V_i\subset V_{i+1}$ for $1\le i\le n-2$. Assume that $L_i\subset \text{int}(V_i)$ for all $1\le i\le n-1$, and
that for $2\le i\le {n}$ one has
\[
L_i\subset {S^3}\setminus V_{i-1}.
\]
See Figure~\ref{fig:nested} for an illustration. 
The boundary tori $\partial V_1,\ldots,\partial V_{n-1}$ lie in the exterior of $L$.

Our goal is to determine when the tori $\partial V_1,\ldots,\partial V_{n-1}$ are essential in the
exterior of $L$.

\begin{definition}
Let $V\cong S^1\times D^2$ be a solid torus and let $L\subset \operatorname{int}(V)$ be a
link. The wrapping number of $L$ in $V$ is defined by
\[
\operatorname{wrap}(L)=\min_D |L\cap D|,
\]
where the minimum is taken over all meridional disks $D\subset V$. In other
words, $\operatorname{wrap}(L)$ is the minimal geometric intersection number of $L$ with a
meridional disk.
\end{definition}

\begin{definition}
Let $V\cong S^1\times D^2$ be an oriented solid torus and let $L\subset
\operatorname{int}(V)$ be an oriented link. Choose an oriented meridional disk $D\subset V$. The
winding number of $L$ in $V$ is
\[
w(L)=[L]\cdot [D],
\]
the algebraic intersection number of $L$ with $D$. This is well-defined, i.e.
it does not depend on the choice of meridional disk $D$.
\end{definition}

\begin{remark}
Let $L\subset V$ be a link in a solid torus. Then
\[
|w(L)|\le \operatorname{wrap}(L).
\]
Indeed, the winding number is the algebraic intersection number with a
meridional disk, while the wrapping number is the minimal geometric intersection
number. Since algebraic intersections may cancel, their absolute
value cannot exceed the total number of intersection points.
\end{remark}

\begin{lemma}\label{unknotted}
Let $V_1\subset S^3$ be a solid torus and set
\[
W:=S^3\setminus \operatorname{int}(V_1),
\]
so that $W$ is also a solid torus. If $K\subset W$ is a knot whose wrapping number
in $W$ is $1$ and which is unknotted in $S^3$ (i.e. bounds an embedded disk in $S^3$),
then $K$ is isotopic in $W$ to a meridian of $\partial V_1$.
\end{lemma}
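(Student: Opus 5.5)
The plan is to cut the solid torus $W$ along a meridional disk that meets $K$ exactly once. This reduces the statement to the triviality of a one-string tangle in a $3$-ball, which we then get from the unknottedness of $K$ in $S^3$.

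\emph{Cutting $W$.} Since $\operatorname{wrap}(K)=1$ in $W$, there is a meridional disk $D\subset W$ with $|K\cap D|=1$; after a small isotopy we may assume the intersection is transverse. Take a thin product neighbourhood $N(D)\cong D\times[-1,1]$ in $W$ meeting $K$ in a single unknotted arc $\{p\}\times[-1,1]$. Then $B:=W\setminus \operatorname{int} N(D)$ is a $3$-ball, and $k:=K\cap B$ is a single properly embedded arc in $B$, with one endpoint on each copy $D\times\{\pm1\}$ of $D$.

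\emph{The complementary ball.} Next we identify $B':=S^3\setminus\operatorname{int}(B)=V_1\cup N(D)$. It is $V_1$ with a $2$-handle attached along $\partial D$. The curve $\partial D$ is a meridian of $W$, and since $W=S^3\setminus\operatorname{int}(V_1)$ with both pieces solid tori (so $V_1$ is unknotted), $\partial D$ meets the meridian of $V_1$ exactly once on the torus $\partial V_1$. Attaching a $2$-handle to a solid torus along such a curve gives a $3$-ball, so $B'$ is a ball. Moreover $K\cap B'$ is the short arc $\{p\}\times[-1,1]$, which is unknotted in $B'$, i.e.\ parallel to $\partial B'$. Hence the $2$-sphere $S=\partial B$ meets $K$ in exactly two points, and
\[
(S^3,K)=(B,k)\cup_S (B',\text{trivial arc}).
\]

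\emph{Triviality of $k$ and the isotopy.} This is the step I expect to be the main obstacle. From the decomposition, the knot type of $K$ in $S^3$ is the closure of the tangle $(B,k)$ by a boundary-parallel arc. Since $K$ is unknotted, I would invoke the standard fact, a consequence of Schubert's additivity of genus, that the unknot is prime. Concretely, a $2$-sphere meeting the unknot in two points bounds balls on both sides in which the arcs are trivial. This forces $k$ to be boundary-parallel in $B$. The delicate point is to apply this cleanly with the specific sphere $S$; I would argue through the Seifert genus or the fundamental group of the arc complement, rather than by citing "closure trivial implies tangle trivial" without justification. Once $k$ is trivial, we identify $B\cong D\times[-1,1]$ and isotope $k$, rel endpoints and inside $B$, to the straight arc $\{p\}\times[-1,1]$. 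Gluing back gives an isotopy in $W$ from $K$ to $\{p\}\times S^1$, the core of $W$. The core is isotopic within $W$ to a curve on $\partial V_1$ bounding a disk in $V_1$, namely a meridian of $\partial V_1$, which proves Lemma~\ref{unknotted}.
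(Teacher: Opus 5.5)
Your argument is correct in outline and takes a genuinely different route from the paper's. The paper never cuts $W$. It takes a spanning disk for $K$ in $S^3$ and uses an innermost-disk argument to make every curve of its intersection with $\partial V_1$ essential on the torus. It then identifies $K$ with an outermost such curve, a torus knot $T(p,q)$ on $\partial V_1$, and only at the end uses $\operatorname{wrap}(K)=1$ to force $|q|=1$.

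You spend the wrapping-number hypothesis first, which is the classical picture of a wrapping-number-one knot as the core of $W$ with a local knot tied in. The payoff is that the surface you must control is a sphere meeting $K$ twice rather than a torus. So you never have to analyse essential curves on $\partial V_1$, or justify that $K$ is isotopic in $W$ to an outermost intersection curve; that step needs the region of the spanning disk adjacent to $K$ to be an annulus. The rest of your argument is fine, including the final identification of the core of $W$ with a meridian of $\partial V_1$ (via $\partial D$ meeting that meridian once).

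The step you flag does need an actual argument, and the appeal to Schubert's additivity does not supply it. The closure of $(B,k)$ by a trivial arc is $K$ itself, so additivity only returns the tautology $g(K)=g(K)+0$. What you need is that a ball--arc pair whose closure is the unknot is trivial. The cleanest proof is the paper's own cut-and-paste technique, applied to your sphere $S=\partial B$:
\begin{itemize}
\item Take a disk $\Delta$ with $\partial\Delta=K$, transverse to $S$. Then $\Delta\cap S$ consists of one arc $\gamma$ joining the two points of $K\cap S$, plus circles.
\item Each circle is disjoint from $\gamma$, hence bounds a disk in the open disk $S\setminus\gamma$. Surgery along a circle that is innermost on $S$ removes it without changing $\partial\Delta$.
\item Once $\Delta\cap S=\gamma$, the arc $\gamma$ cuts off a disk in $B$ bounded by $k\cup\gamma$, so $k$ is boundary-parallel.
\end{itemize}

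This argument uses only that $S$ is a sphere meeting $K$ in two points and that $K$ bounds a disk. It therefore makes your identification of $B'$ and of the arc $K\cap B'$ unnecessary. As written, that claim was not immediate anyway. The arc $\{p\}\times[-1,1]$ is obviously parallel only to the annulus $\partial D\times[-1,1]$, which lies in the interior of $B'$, not on $\partial B'$. A correct parallelism disk is a meridian disk of $V_1$ together with a strip $[p,q]\times[-1,1]$.
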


\begin{proof}
Let $K\subset W$ be a knot of wrapping number $1$ in $W$ which is unknotted
in $S^3$. Let $D\subset S^3$ be an embedded disk with $\partial D=K$, and set
\[
C:=D\cap \partial V_1.
\]
Choose $D$ so that $C$ has the minimal possible number of components.

Each component of $C$ is a simple closed curve lying in the interior of the
disk $D$ (since $\partial D=K$ is disjoint from $\partial V_1$). Suppose, for contradiction, that
some component $c\in C$ is inessential on $\partial V_1$. Then $c$ bounds an embedded
disk $E\subset \partial V_1$.

Choose $c$ to be innermost on $D$, i.e., let $\Delta\subset D$ be the subdisk with $\partial \Delta=c$
and $\operatorname{int}(\Delta)\cap C=\varnothing$. Note that $\Delta$ and $E$ meet only along their common
boundary $c$. Replacing $\Delta$ by $E$ (smoothing corners if necessary) produces a
new embedded disk $D'$ with the same boundary $\partial D'=K$ and strictly fewer
intersection curves with $\partial V_1$ than $D$. This contradicts the minimality of $|C|$.

Hence no component of $C$ is inessential on $\partial V_1$, so every component is
essential.

Denote by $K'$ the outermost loop in $C$. This loop is isotopic to $K$ in $W$.

Any essential curve on the torus $\partial V_1$ corresponds to a slope and hence
is isotopic to a torus knot $T(p,q)\subset \partial V_1$ for some coprime integers $p,q$.
Here $T(p,q)$ winds $p$ times in the longitudinal direction and $q$ times in the
meridional direction of $\partial V_1$. The wrapping number of $K$ in $W$ equals $|q|$.
Since $K$ has wrapping number $1$, we conclude that $|q|=1$. But the only torus
knots $T(p,\pm 1)$ are trivial knots on $\partial V_1$, each isotopic in $W$ to a meridian of
$V_1$.

Thus $K$ is isotopic in $W$ to a meridian of $\partial V_1$, completing the proof.
\end{proof}

We begin by characterizing when $\partial V_1$ is an essential torus:

\begin{proposition}\label{Pro1}
\leavevmode
\begin{itemize}
\item If $n\ge 3$, then the torus $\partial V_1$ is essential if and only if:
\begin{itemize}
\item $L_1$ has wrapping number greater than one in $V_1$, or has wrapping
number equal to one and is not the trivial knot; and
\item at least one of $L_2,\ldots,L_n$ has non-zero wrapping number in
$S^3\setminus V_1$.
\end{itemize}

\item If $n=2$, then the torus $\partial V_1$ is essential if and only if:
\begin{itemize}
\item $L_1$ has wrapping number greater than one in $V_1$, or has wrapping
number equal to one and is not the trivial knot; and
\item $L_2$ has wrapping number greater than one in $S^3\setminus V_1$, or has wrapping number equal
to one and is not the trivial knot.
\end{itemize}
\end{itemize}
\end{proposition}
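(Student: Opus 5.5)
The proof has two directions, and in both we work with the decomposition $M_L = (V_1\setminus N(L_1)) \cup_{\partial V_1} (W\setminus N(L_2\cup\cdots\cup L_n))$, where $W=S^3\setminus \operatorname{int}(V_1)$ is again an unknotted solid torus. The basic tool is the standard fact that a torus is incompressible in $M_L$ if and only if it is incompressible on each side. The plan is therefore to analyse $\partial V_1$ separately from the side of $V_1$ and from the side of $W$. After that we rule out boundary parallelism using the hypotheses.

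\emph{Sufficiency.} On the $V_1$ side, suppose a compressing disk $D\subset V_1\setminus L_1$ exists. Since $\partial D$ is essential on $\partial V_1$ and bounds a disk in the solid torus $V_1$, it must be a meridian. Then $D$ is a meridional disk missing $L_1$, so $\operatorname{wrap}(L_1)=0$, which is excluded.

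On the $W$ side the same argument applies with $W$: a compressing disk in $W$ is a meridian disk of $W$ (a longitude of $V_1$) missing $L_2\cup\dots\cup L_n$. This contradicts the assumption that some $L_j$, $j\ge 2$, has nonzero wrapping number in $W$. (For $n\ge 3$ the components $L_j$ with $j\ge 2$ lie in $W$, so the hypothesis is exactly what is needed.) Hence $\partial V_1$ is incompressible.

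For non-boundary-parallelism, note that if $\partial V_1$ were parallel to $\partial N(K)$ for a component $K$, then one side of $\partial V_1$ in $M_L$ would be $T^2\times I$. This forces that side to contain exactly one component $K$, with $V_1$ (resp.\ $W$) equal to a tubular neighbourhood of $K$. Then $K$ is a core, so it has wrapping number one, and it is unknotted since $V_1$ is unknotted. On the $V_1$ side this is excluded by hypothesis. On the $W$ side, when $n\ge3$ the exterior of $V_1$ contains at least two components $L_2,L_3$ (all $L_i$ are nonempty), so it cannot be $T^2\times I$. When $n=2$ we use the hypothesis on $L_2$.

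\emph{Necessity.} If $\operatorname{wrap}(L_1)=0$, a meridian disk of $V_1$ missing $L_1$ compresses $\partial V_1$. Similarly, if all $L_j$, $j\ge2$, have wrapping number zero in $W$, a meridian disk of $W$ compresses $\partial V_1$. If $L_1$ is a trivial knot with wrapping number one, we apply Lemma~\ref{unknotted}, with the roles of $V_1$ and $W$ exchanged (the lemma is symmetric since both are unknotted solid tori), to conclude that $L_1$ is isotopic in $V_1$ to a core. Then $V_1\setminus N(L_1)\cong T^2\times I$ and $\partial V_1$ is boundary parallel. The $n=2$ case for $L_2$ is identical.

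The main obstacle I expect is the step asserting that boundary parallelism forces one side to be $T^2\times I$ containing a single component whose neighbourhood is the solid torus. This requires the uniqueness of product regions, i.e.\ that a parallelism region cannot contain other link components or wrap around. I would handle it by noting that the parallelism $T^2\times I$ lies in $M_L$, hence is disjoint from $L$. It therefore cobounds, together with $N(K)$, one of the two solid tori $V_1$ or $W$, which gives the desired characterization.
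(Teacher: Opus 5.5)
Your strategy is the same as the paper's: you treat compressing disks and boundary-parallelism separately on the $V_1$ side and on the $W$ side, and you use Lemma~\ref{unknotted} to see that a trivial knot of wrapping number one is a core of $V_1$. Your sufficiency argument is fine, as are the reduction from incompressibility in $M_L$ to incompressibility on each side and the product-region argument for boundary-parallelism; all three are more careful than the paper's sketch.

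The gap is in the necessity direction for $n\ge 3$ on the $W$ side, which you dispose of with ``similarly''. If each $L_j$, $j\ge 2$, has wrapping number zero in $W$, you only get, for each $j$, \emph{some} meridian disk $D_j$ of $W$ missing $L_j$. To compress $\partial V_1$ you need a \emph{single} meridian disk missing $L_2\cup\cdots\cup L_n$ simultaneously, and that does not follow in general. The Whitehead link of Figure~\ref{fig:whitehead} has two components of wrapping number zero whose union has wrapping number two. Placed in $W$, with a suitable $L_1$ in $V_1$, it would make $\partial V_1$ incompressible and not boundary parallel. So without further input the ``only if'' direction is false.

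What rescues the statement is the nesting hypothesis, which your argument never uses. Write $W_k:=S^3\setminus\operatorname{int}(V_k)$. The configuration $V'_k=W_{n-k}$, $L'_k=L_{n+1-k}$ of Remark~\ref{rem:symmetries} is again nested. Lemma~\ref{lem:additive}, whose proof does not depend on this proposition, then applies. Together with Remark~\ref{rem1}, which identifies the wrapping number of $L_j$ in $W_{j-1}$ with its wrapping number in $W_1$, it gives $\operatorname{wrap}(L_2\cup\cdots\cup L_n)=\sum_{j=2}^{n}\operatorname{wrap}(L_j)=0$ in $W_1$. Only then does one meridian disk of $W$ compress $\partial V_1$.

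The paper's own proof also asserts this step in a single line. It is nonetheless the point where a per-component criterion needs the tori $\partial V_2,\dots,\partial V_{n-1}$ separating the $L_j$. With this step inserted, your proof goes through.
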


\begin{proof}
Suppose first that $n\ge 3$. If the torus $\partial V_1$ is essential in the exterior
of $L$, then it is incompressible and not boundary parallel in $S^3\setminus L$.

On the side $V_1'$ of $S^3\setminus \partial V_1$ containing $L_1$, incompressibility implies that
$L_1$ cannot be contained in a $3$-ball inside $V_1$. Moreover, since $\partial V_1$ is not
boundary parallel on this side, it follows that $L_1$ is not the core of the solid
torus $V_1'$. Equivalently, $L_1$ has wrapping number greater than one in $V_1$, or
has wrapping number equal to one and is not a trivial knot (i.e. it is not
the core of $V_1'$). Indeed, if $L_1$ has wrapping number one and is trivial, then
$\partial V_1$ is boundary parallel to $\partial N(L_1)$ by Lemma~\ref{unknotted}.

On the side $S^3\setminus V_1'$ containing $L_2,\ldots,L_n$, there are at least two components.
Hence, $\partial V_1$ cannot be boundary parallel to the boundary of a tubular
neighbourhood of a single component. For incompressibility to hold on this
side, at least one of $L_2,\ldots,L_n$ must have non-zero wrapping number in
$S^3\setminus V_1$.

Conversely, assume that $L_1$ has wrapping number greater than one in $V_1$,
or has wrapping number equal to one and is not a trivial knot, and that at
least one of $L_2,\ldots,L_n$ has non-zero wrapping number in $S^3\setminus V_1$. Then $\partial V_1$
is incompressible on both sides and is not boundary parallel. Therefore, $\partial V_1$
is essential in the exterior of $L$.

Now suppose that $n=2$. If $\partial V_1$ is essential, then, as before, on the
side containing $L_1$, incompressibility together with the fact that $\partial V_1$ is not
boundary parallel implies that $L_1$ has wrapping number greater than one in
$V_1$, or has wrapping number equal to one and is not a trivial knot.

On the side containing $L_2$, for $\partial V_1$ to be incompressible and not boundary
parallel, the component $L_2$ cannot have wrapping number zero and cannot be
a trivial knot with wrapping number one. Thus, there are two possibilities:
\begin{itemize}
\item $L_2$ has wrapping number greater than one in $S^3\setminus V_1$, or
\item $L_2$ has wrapping number equal to one but is not a trivial knot.
\end{itemize}

Conversely, if $L_1$ has wrapping number greater than one in $V_1$, or has
wrapping number equal to one and is not a trivial knot, and one of the above
conditions holds for $L_2$, then $\partial V_1$ is incompressible on both sides and is not
boundary parallel. Hence, $\partial V_1$ is essential in the exterior of $L$.
\end{proof} 

\begin{remark}\label{rem1}
Let $V_1\subset V_2\subset\ldots\subset V_{n-1}$ be unknotted nested solid tori and let $L_i\subset V_i$ for all $i<n$ and $L_i\subset S^3\backslash V_{i-1}$ for all $i>1$. Then the wrapping number of $L_i$ in $V_i$ is equal to its wrapping number in $V_j$ for all $j>i$. The same holds for the winding numbers.
\end{remark}

\begin{lemma}\label{lem:additive}
    Let $L_i\subset V_i$ for all $i<n$ and $L_i\subset S^3\backslash V_{i-1}$ for all $i>1$. Then for all $i<n$ the wrapping number of $L_1\cup L_2\cup\ldots\cup L_i$ in $V_i$ is equal to $\sum_{j=1}^{i}\operatorname{wrap}(L_j)$, where $\operatorname{wrap}(L_j)$ denotes the wrapping number of $L_j$ in $V_j$.
\end{lemma}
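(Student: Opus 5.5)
We argue by induction on $i$, proving both inequalities $\operatorname{wrap}(L_1\cup\cdots\cup L_i)\le\sum_{j\le i}\operatorname{wrap}(L_j)$ and $\ge$. The case $i=1$ is a tautology.

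For the upper bound we build a meridional disk of $V_i$ by hand. The solid tori are nested with a common core, so each shell $V_j\setminus\operatorname{int}(V_{j-1})$ is homeomorphic to $T^2\times[0,1]$. Choose a meridional disk $D_j$ of $V_j$ realising $\operatorname{wrap}(L_j)$. By Remark~\ref{rem1}, the wrapping number of $L_j$ is the same in $V_j$ and in every larger $V_k$. So we may isotope $D_j$ in $V_j$ (fixing $L_j$) so that its intersection with $V_{j-1}$ is a standard meridian disk of $V_{j-1}$. In the shell, this can be done by pushing $D_j$ radially, since $L_j$ lies outside $V_{j-1}$ and the radial collar of $\partial V_{j-1}$ can be taken disjoint from $L_j$.

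We then glue pieces: $D_1$ inside $V_1$, followed by the annular pieces $D_j\setminus \operatorname{int}(V_{j-1})$ for $j=2,\ldots,i$. The boundary circles agree up to isotopy on each $\partial V_{j-1}$, so the result is a meridional disk of $V_i$ meeting $L_1\cup\cdots\cup L_i$ in exactly $\sum_{j\le i}\operatorname{wrap}(L_j)$ points.

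For the lower bound, take a meridional disk $D$ of $V_i$ that minimises $|D\cap(L_1\cup\cdots\cup L_i)|$, and among these one that minimises the number of components of $D\cap\partial V_{i-1}$. We first remove the inessential curves of $D\cap\partial V_{i-1}$, using an innermost-disk swap exactly as in the proof of Lemma~\ref{unknotted}. The replacement disk $E\subset\partial V_{i-1}$ is disjoint from the link, so this swap does not increase the intersection count with $L$.

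After this, every curve of $D\cap\partial V_{i-1}$ is essential in $\partial V_{i-1}$ and bounds a subdisk of $D$. Such a curve is null-homotopic in $V_i$, so it must be a meridian of $\partial V_{i-1}$; a longitude-type slope is not null-homotopic in $V_i$ because the cores coincide. A further innermost argument then shows that some subdisk $D'\subset D$ is a meridional disk of $V_{i-1}$. Its complement $D\setminus D'$ meets $L_i$ at least as often as a meridional disk of $V_i$ does in the region $V_i\setminus V_{i-1}$; that is, it meets $L_i$ at least $\operatorname{wrap}(L_i)$ times. The justification is that capping $D\setminus D'$ with a meridian disk of $V_{i-1}$ gives a meridional disk of $V_i$ whose intersections with $L_i$ all lie in $D\setminus D'$.

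By induction, $D'$ meets $L_1\cup\cdots\cup L_{i-1}$ in at least $\sum_{j<i}\operatorname{wrap}(L_j)$ points. The two intersection sets are disjoint, which gives the lower bound.

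The main obstacle is the claim that all the essential curves produced by the cut-and-paste reduce to a single meridional subdisk. The difficulty is that $D\cap V_{i-1}$ may a priori consist of several meridional disks, or of planar pieces with several boundary curves. If there are several meridional disks, only one is needed, and the extra ones only add intersections, so the inequality survives. Planar pieces in $V_{i-1}$ whose boundary consists of several parallel meridians still contain, after capping, a meridional disk of $V_{i-1}$; there I would argue via a homological count showing such a piece must contain an odd number of meridian boundaries, so it can be tubed down to one.
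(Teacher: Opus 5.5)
\textbf{The gap is in your upper bound, and it cannot be repaired: the inequality $\le$ is false in general.} Your lower bound is sound.

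On the lower bound: the difficulty you flag there is not real. Once the inessential curves are swapped away, an innermost curve of $D\cap\partial V_{i-1}$ bounds a subdisk that cannot lie in the shell $V_i\setminus\operatorname{int}V_{i-1}\cong T^2\times I$, since a meridian is essential there. So that subdisk is a meridional disk of $V_{i-1}$. The bound $|D\cap L_i|\ge\operatorname{wrap}(L_i)$ then holds simply because $D$ is a meridional disk of $V_i$. Your closing ``tubing'' remark is unnecessary, and tubing two boundary curves of a planar surface raises its genus, so it would not produce a disk anyway.

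The failing step is the claim that a disk $D_j$ realising $\operatorname{wrap}(L_j)$ can be moved, without new intersections with $L_j$, so that $D_j\cap V_{j-1}$ is a single meridian disk. Remark~\ref{rem1} does not give this. A radial push in a collar of $\partial V_{j-1}$ only moves $D_j$ near $\partial V_{j-1}$; it cannot reduce the number of times $D_j$ passes through $V_{j-1}$. A minimising disk can be forced to pass through several times, as the following construction shows.

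\begin{itemize}
\item Start from the Borromean rings $\gamma\cup L\cup\delta'$ as three mutually perpendicular ellipses. Here $\delta'$ bounds a flat disk $E$ that misses $L$ and meets $\gamma$ in two points of opposite sign, and $\gamma$ bounds a flat disk missing $\delta'$.
\item Band $\delta'$ to a small meridian circle of $\gamma$ next to one puncture, using a short half-twisted band. This gives $\delta$ bounding a disk $\Sigma$ that misses $L$ and meets $\gamma$ in three points of signs $+,+,-$.
\item Then $\delta$ is unknotted, and the flat disk of $\gamma$ meets $\delta$ once, so $\gamma\cup\delta$ is a Hopf link. Hence $V_1=N(\gamma)\subset V_2=S^3\setminus N(\delta)$ are nested unknotted solid tori with a common core.
\item Set $L_1=\gamma$ and $L_2=L$. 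Then $\operatorname{wrap}(L_1)=1$, and $\operatorname{wrap}(L_2)=0$ by using $\Sigma$.
\item Suppose a meridional disk of $V_2$ met $L_1\cup L_2$ once. It would meet $\gamma$ once and miss $L$, so $\delta$ would be freely homotopic in $S^3\setminus(\gamma\cup L)$ to a meridian of $\gamma$.
\item That fundamental group is free on meridians $a,b$. Reading $\partial\Sigma$ off the three punctures gives $\delta\sim a^2ba^{-1}b^{-1}$ (up to $b\leftrightarrow b^{-1}$), which is not conjugate to $a^{\pm1}$.
\item Since $L_2$ has winding number $0$, parity gives $\operatorname{wrap}(L_1\cup L_2)\ge 3>1=\operatorname{wrap}(L_1)+\operatorname{wrap}(L_2)$.
\end{itemize}

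The paper's proof makes the same normalisation (``we can assume that it intersects $\partial V_i$ in a meridian''). Its only support is the innermost-disk argument of Lemma~\ref{unknotted}, which removes inessential curves but not surplus meridians. What your argument, and the paper's, actually establishes is
\[
\operatorname{wrap}(L_1\cup\cdots\cup L_i)\ge\sum_{j\le i}\operatorname{wrap}(L_j),
\]
which is the direction used in the ``if'' part of Proposition~\ref{Pro3}.
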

\begin{proof}
We prove the lemma by induction on $i$. For $i=1$ the statement is trivial. Now suppose that we have the desired equality for some $i<n$. We first show that the wrapping number of the union of the $i+1$ links in $V_{i+1}$ is bounded from below by the sum of the wrapping numbers of the individual links in their corresponding tori.
Consider a meridional disk $D$ of $V_{i+1}$. Its geometric intersection number with $L_{i+1}$ is at least $\operatorname{wrap}(L_{i+1})$. After an isotopy that does not increase the geometric intersection number with $L_{i+1}$ we can assume that $D$ intersects $\partial V_i$ in exactly one closed loop, which must be a meridian of $\partial V_i$, since $V_i$ and $V_{i+1}$ have the same core. (This argument is similar to the proof of Lemma 3.4.) The intersection $D\cap V_i$ is thus a meridional disk of $V_i$ whose geometric intersection number with $L_1\cup L_2\cup\ldots\cup L_i$ is at least its wrapping number, which by the induction hypothesis is equal to $\sum_{j=1}^i\operatorname{wrap}(L_j)$. Thus the geometric intersection number between $D$ and $L_1\cup L_2\cup\ldots\cup L_{i+1}$ in $V_{i+1}$ is at least $\sum_{j=1}^{i+1}\operatorname{wrap}(L_j)$.

Now consider a meridional disk $D$ of $V_{i+1}$ that realizes the wrapping number of $L_{i+1}$. Then as in the previous argument, we can assume that it intersects $\partial V_i$ in a meridian of $\partial V_i$. By replacing $D_1:=D\cap V_i$ with a meridional disk $D_2$ of $V_i$ that realizes the wrapping number of $L_1\cup L_2\cup\ldots\cup L_i$ in $V_i$ we obtain a meridianol disk $D'=(D\backslash D_1)\cup D_2$ whose geometric intersection number with $L_1\cup L_2\cup \ldots\cup L_{i+1}$ is equal to $\operatorname{wrap}(L_1\cup L_2\cup\ldots \cup L_i)+\operatorname{wrap}(L_{i+1})$, which by the induction hypothesis is $\sum_{j=1}^i\operatorname{wrap}(L_j)+\operatorname{wrap}(L_{i+1})=\sum_{j=1}^{i+1}\operatorname{wrap}(L_j)$. Since the wrapping number is the minimal geometric intersection between any meridional disk and the link, we have that the wrapping number of $L_1\cup L_2\cup\ldots\cup L_{i+1}$ in $V_{i+1}$ is at most $\sum_{j=1}^{i+1}\operatorname{wrap}(L_j)$.

The two inequalities together prove the desired equality for $i+1$, which proves the lemma by induction. 
\end{proof}
Note that in general the wrapping number is not additive. We only obtain the equality in Lemma~\ref{lem:additive} because the links do not intersect the boundaries of the nested tori. Figure~\ref{fig:whitehead} shows a Whitehead link in a solid torus such that each individual component has wrapping number zero, but the wrapping number of the two-component link is two.

\begin{figure}[h]
\centering
\includegraphics[height=3cm]{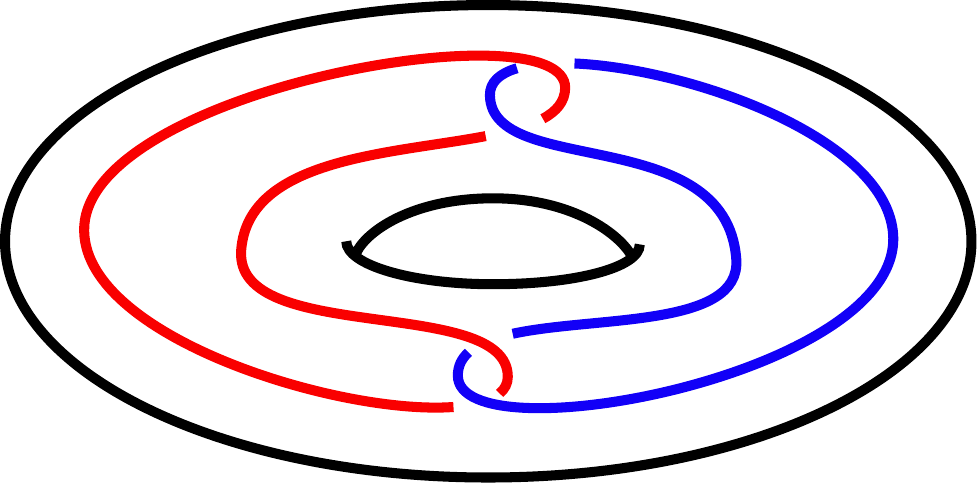}
\caption{A Whitehead link in a solid torus. The wrapping number is not additive in general.\label{fig:whitehead}}
\end{figure}

The winding number on the other hand is always additive.

\begin{remark}\label{rem:symmetries}
Let $V_1\subset V_2\subset\ldots\subset V_{n-1}$ be a set of unknotted nested solid tori and let $W_i=S^3\backslash \text{int}(V_i)$. Then $V'_1:=W_{n-1}\subset V'_2:=W_{n-2}\subset\ldots\subset V'_{n-1}:=W_1$ also form a set of unknotted nested solid tori. If the links satisfy $L_i\subset V_i$ for all $i<n$ and $L_i\subset S^3\backslash V_{i-1}$ for all $i>1$, then the links $L'_1:=L_n$, $L'_2:=L_{n-1},\ldots,L'_n:=L_1$ satisfy $L'_i\subset V'_i$ for all $i<n$ and $L'_i\subset S^3\backslash V'_{i-1}$ for all $i>1$.
\end{remark}

We now characterize when $\partial V_{n-1}$ is an essential torus. We assume $n>2$,
as the case $n=2$ was addressed in the previous proposition.

\begin{proposition}\label{Pro2}
Consider $n>2$. The torus $\partial V_{n-1}$ is essential if and only
if at least one of $L_1,\ldots,L_{n-1}$ has wrapping number greater than zero inside
$V_{n-1}$ and one of the following holds:
\begin{itemize}
\item $L_n$ has wrapping number greater than one in $S^3\setminus V_{n-1}$; or
\item $L_n$ is a nontrivial knot with wrapping number equal to one in
$S^3\setminus V_{n-1}$.
\end{itemize}
\end{proposition}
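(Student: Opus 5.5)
The plan is to reduce Proposition~\ref{Pro2} to the case $n\ge 3$ of Proposition~\ref{Pro1} using the symmetry of Remark~\ref{rem:symmetries}. Set $W_i=S^3\setminus\operatorname{int}(V_i)$ and consider the reversed configuration $V'_1:=W_{n-1}\subset V'_2:=W_{n-2}\subset\cdots\subset V'_{n-1}:=W_1$ with links $L'_1:=L_n,\ L'_2:=L_{n-1},\dots,L'_n:=L_1$. By Remark~\ref{rem:symmetries} this is again a family of unknotted nested solid tori satisfying the standing hypotheses of the section. Moreover $\partial V'_1=\partial W_{n-1}=\partial V_{n-1}$ as a subset of the exterior of $L=L'_1\cup\cdots\cup L'_n$. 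Essentiality is a property of the torus in the link exterior only, so $\partial V_{n-1}$ is essential for $L$ if and only if $\partial V'_1$ is essential in the primed configuration. Since $n>2$, the first bullet of Proposition~\ref{Pro1} applies to the primed data.

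Next I translate the two conditions of Proposition~\ref{Pro1}. The first condition reads: $L'_1=L_n$ has wrapping number $>1$ in $V'_1=W_{n-1}=S^3\setminus\operatorname{int}(V_{n-1})$, or has wrapping number $1$ and is a nontrivial knot. This is exactly the disjunction in the statement, since wrapping numbers in $S^3\setminus V_{n-1}$ and in its closure $W_{n-1}$ agree because $L_n$ avoids $\partial V_{n-1}$. The second condition reads: at least one of $L'_2,\dots,L'_n$, that is, one of $L_{n-1},\dots,L_1$, has nonzero wrapping number in $S^3\setminus V'_1=\operatorname{int}(V_{n-1})$. This is precisely the requirement that some $L_j$ with $j\le n-1$ has positive wrapping number inside $V_{n-1}$. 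Combining the two translations gives the claimed equivalence.

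The main obstacle is checking that wrapping numbers transfer correctly under the relabeling. The complementary region $S^3\setminus V'_1$ is the interior of the solid torus $V_{n-1}$. Its meridian disks are meridian disks of $V_{n-1}$, and since $V_{n-1}$ and $W_{n-1}$ are complementary unknotted solid tori, the meridian of one is the longitude of the other. So "wrapping number in $S^3\setminus V'_1$", as used in Proposition~\ref{Pro1}, is literally the wrapping number in $V_{n-1}$. Remark~\ref{rem1} further lets me say that the wrapping number of $L_j$ in $V_{n-1}$ equals its wrapping number in $V_j$, if one wishes to phrase the condition that way.

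One should also confirm that the $n\ge3$ clause of Proposition~\ref{Pro1} needs at least two components on the side of $\partial V'_1$ away from $L'_1$. This holds here because the standing assumption of the section is that all $L_i$ are nonempty, and there are $n-1\ge2$ of them.
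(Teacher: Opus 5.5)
Your proposal is correct and follows the paper's own proof. Like the paper, you reverse the nested configuration via Remark~\ref{rem:symmetries}, so that $L_n$ lies in $V'_1=W_{n-1}$ and $L_1,\dots,L_{n-1}$ lie in $S^3\setminus V'_1=\operatorname{int}(V_{n-1})$. You then read off the first case of Proposition~\ref{Pro1} and use Remark~\ref{rem1} to relate the wrapping numbers. Your extra checks, that $\partial V'_1=\partial V_{n-1}$ and that $n-1\ge 2$ nonempty components lie on the far side, make explicit what the paper's one-line proof leaves implicit.
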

\begin{proof}
The proposition follows by applying the first case of Proposition~\ref{Pro1}, viewing $L_n$ as lying inside $V'_1$ and $L_1, L_2, \dots, L_{n-1}$ as lying in $S^3 \setminus V'_1$, together with Remarks~\ref{rem1} and \ref{rem:symmetries}.
\end{proof}
We now characterize the conditions under which the tori $\partial V_2,\ldots,\partial V_{n-2}$
are essential. We assume $n>3$, since the cases $n=2$ and $n=3$ were already
treated in the previous propositions.

\begin{proposition}\label{Pro3}
Suppose $n>3$ and let $1<i<n-1$. The torus $\partial V_i$ is
essential if and only if
\begin{itemize}
\item at least one of the components $L_1,\ldots,L_i$ has wrapping number greater
than zero inside $V_i$, and
\item at least one of the components $L_{i+1},\ldots,L_n$ has wrapping number
greater than zero in $S^3\setminus V_i$.
\end{itemize}
\end{proposition}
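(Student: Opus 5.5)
We would follow the template of Proposition~\ref{Pro1}: analyse the two sides of $\partial V_i$ separately. Write $A:=L_1\cup\cdots\cup L_i\subset \operatorname{int}(V_i)$ and $B:=L_{i+1}\cup\cdots\cup L_n\subset W_i:=S^3\setminus\operatorname{int}(V_i)$. Both are solid tori, since $V_i$ is unknotted. The key simplification when $1<i<n-1$ is that each side contains at least two nonempty sublinks, because all $L_j$ are assumed nonempty in this section. Hence $A$ and $B$ each have at least two components.

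\emph{Boundary parallelism.} A torus parallel to a component $\partial N(K)$ bounds, on one side, a region $T^2\times I$ together with $N(K)$. That side therefore contains exactly one link component, namely $K$. Since both sides of $\partial V_i$ contain at least two components, $\partial V_i$ is never boundary parallel. So essentiality reduces to incompressibility in $S^3\setminus L$.

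\emph{Incompressibility.} A compressing disk $D$ for $\partial V_i$ can be isotoped, by the usual innermost-circle argument as in Lemma~\ref{unknotted}, so that its interior lies on one side. It then lies either in $V_i\setminus A$ or in $W_i\setminus B$. In the solid torus $V_i$, the only essential curve on $\partial V_i$ bounding a disk in $V_i$ is the meridian. So a compression on that side exists if and only if some meridional disk of $V_i$ misses $A$, that is, if and only if $\operatorname{wrap}(A)=0$ in $V_i$. By Lemma~\ref{lem:additive} and Remark~\ref{rem1}, $\operatorname{wrap}(A)=\sum_{j\le i}\operatorname{wrap}(L_j)$, where $\operatorname{wrap}(L_j)$ can be computed in $V_j$ or equivalently in $V_i$. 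This vanishes if and only if every $L_j$ with $j\le i$ has wrapping number zero inside $V_i$.

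For the outer side, apply the same argument to the solid torus $W_i$. Via Remark~\ref{rem:symmetries}, the reversed nested family $V'_k=W_{n-k}$ with $L'_k=L_{n+1-k}$ turns $W_i$ into $V'_{n-i}$, and Lemma~\ref{lem:additive} gives $\operatorname{wrap}(B)=\sum_{j>i}\operatorname{wrap}(L_j\text{ in }W_{j-1})$. By Remark~\ref{rem1} these wrapping numbers coincide with those in $S^3\setminus V_i$. So $\partial V_i$ is incompressible on the outer side if and only if some $L_j$ with $j>i$ has positive wrapping number in $S^3\setminus V_i$.

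Combining the two sides: $\partial V_i$ is essential if and only if both conditions hold. Necessity follows because, if either condition fails, a meridional disk of that side gives a compressing disk. Sufficiency follows because any compressing disk would produce a meridional disk missing $A$ or missing $B$.

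\emph{Main obstacle.} The main step to check carefully is the claim that a compressing disk can be pushed entirely to one side. If $D\cap\partial V_i$ contains essential curves of $\partial V_i$, the innermost one bounds a subdisk of $D$ on one side, and that subdisk is already a compressing disk avoiding $L$. Inessential curves are removed by the disk swap of Lemma~\ref{unknotted}; this works because the swapped disk lies on $\partial V_i$, which is disjoint from $L$. I would also verify that a meridional disk with positive intersection cannot be tubed to zero intersection. This is exactly what the definition of wrapping number rules out, so the equivalence is immediate.
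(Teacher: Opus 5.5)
Your proof is correct and follows essentially the paper's route. The paper applies the $n=2$ case of Proposition~\ref{Pro1} to $L'_1=L_1\cup\cdots\cup L_i$ and $L'_2=L_{i+1}\cup\cdots\cup L_n$, notes that multi-component links are not trivial knots, and then invokes Lemma~\ref{lem:additive}; you instead re-derive that two-sided analysis directly (no boundary parallelism because each side has at least two components, compressibility on a side iff the wrapping number there vanishes), and your use of Remark~\ref{rem:symmetries} for the outer side makes explicit a step the paper leaves implicit.
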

\begin{proof}
We apply the second item of Proposition~\ref{Pro1} to the solid torus $V'_1:=V_i$ with $L'_1:=L_1\cup L_2\cup\ldots\cup L_i$ and $L'_2:=L_{i+1}\cup\ldots\cup L_n$. Since both $L'_1$ and $L'_2$ have at least two components, they are not trivial knots. Therefore, $V_i$ is essential if and only the wrapping numbers of both $L'_1$ and $L'_2$ in $V_i$ and $S^3\backslash V_i$, respectively, are greater than zero. By Lemma~\ref{lem:additive} this is equivalent to the conditions stated in the proposition.
\end{proof}

\begin{theorem}\label{th:char_essentialtori}
Let $L=L_1\cup\cdots\cup L_n$ be a link in $S^3$, and let $V_1,\ldots,V_{n-1}$ be
nested trivial solid tori with a common core and $V_i\subset V_{i+1}$ for $1\le i\le n-2$,
where $L_i\subset V_i$ and $L_i\subset V_i\setminus V_{i-1}$ for $2\le i\le n-1$. Denote $W_i:=S^3\setminus \operatorname{int}(V_i)$.
The tori $\partial V_1,\ldots,\partial V_{n-1}$ lie in the exterior $E(L)=S^3\setminus N(L)$. Then $\partial V_i$ is
an essential torus in $E(L)$ precisely in the following cases:

\medskip
\noindent
(ii) If $i = 1$:
\begin{itemize}
\item If $n\ge 3$, the torus $\partial V_1$ is essential if and only if:
\begin{itemize}
\item $L_1$ has wrapping number greater than one in $V_1$, or has wrapping
number equal to one and is not the trivial knot; and
\item at least one of $L_2,\ldots,L_n$ has non-zero wrapping number in
$W_1$.
\end{itemize}

\item If $n=2$, $\partial V_1$ is essential if and only if:
\begin{itemize}
\item $L_1$ has wrapping number greater than one in $V_1$, or has wrapping
number equal to one and is not the trivial knot; and
\item $L_2$ has wrapping number greater than one in $W_1$, or has wrapping number equal to one and is not the trivial knot.
\end{itemize}
\end{itemize}   

\medskip
\noindent
(ii) If $1<i<n-1$ and $n>3$: $\partial V_i$ is essential if and only if
\begin{itemize}
\item at least one of $L_1,\ldots,L_i$ has wrapping number greater than zero in $V_i$, and
\item at least one of $L_{i+1},\ldots,L_n$ has wrapping number greater than zero in $W_i$.
\end{itemize}

\medskip
\noindent
(iii) If $i=n-1$ and $n>2$: $\partial V_{n-1}$ is essential if and only if
\begin{itemize}
\item at least one of $L_1,\ldots,L_{n-1}$ has wrapping number greater than zero in $V_{n-1}$, and
\item either $L_n$ has wrapping number greater than one in $W_{n-1}$, or $L_n$ has wrapping
number equal to one in $W_{n-1}$ and is nontrivial in $W_{n-1}$.
\end{itemize}
\end{theorem}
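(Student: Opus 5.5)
The plan is to assemble Theorem~\ref{th:char_essentialtori} directly from Propositions~\ref{Pro1}, \ref{Pro2} and \ref{Pro3}. These already give the three cases, so the remaining work is to check that their hypotheses match the present setting and that the ranges of $i$ and $n$ cover everything claimed. First I will verify that the configuration in the theorem is exactly the one fixed at the start of Section~\ref{sec:essential}. The $V_i$ are nested, unknotted, and share a common core, with $L_i\subset \operatorname{int}(V_i)$ and $L_i\subset S^3\setminus V_{i-1}$. From this, $W_i=S^3\setminus\operatorname{int}(V_i)$ is a solid torus, and $\partial V_i$ lies in $E(L)$ because no $L_j$ meets $\partial V_i$.

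Case $i=1$ is verbatim Proposition~\ref{Pro1}, with $S^3\setminus V_1$ replaced by $W_1$. This replacement is harmless: the links avoid $\partial V_1$, so wrapping numbers in $S^3\setminus V_1$ and in $W_1$ agree. Case $i=n-1$ with $n>2$ is Proposition~\ref{Pro2}, which itself comes from Proposition~\ref{Pro1} via the symmetry of Remark~\ref{rem:symmetries}, exchanging $V_j$ with $W_{n-j}$ and reversing the order of the $L_j$. I will also use Remark~\ref{rem1}: a statement about wrapping numbers of the $L_j$ inside $V_{n-1}$ can be read off from their wrapping numbers in their own tori $V_j$.

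Case $1<i<n-1$ with $n>3$ is Proposition~\ref{Pro3}. Its mechanism is to treat $L'_1=L_1\cup\cdots\cup L_i$ and $L'_2=L_{i+1}\cup\cdots\cup L_n$ as two links separated by the single torus $\partial V_i$. Each has at least two components, so neither can be a trivial knot, and the $n=2$ clause of Proposition~\ref{Pro1} reduces essentiality to nonzero wrapping number on each side. Lemma~\ref{lem:additive} then converts this into the statement that some component on each side has positive wrapping number. I will check carefully that Lemma~\ref{lem:additive} applies on the $W_i$ side as well, using Remark~\ref{rem:symmetries} to transfer it.

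The main obstacle is not the assembly but the soundness of the inherited ``if'' directions, in particular the incompressibility claims in Proposition~\ref{Pro1}. If I had to reprove them, I would argue via an innermost-disk reduction as in Lemma~\ref{unknotted}. Take a compressing disk $D$ for $\partial V_i$ on, say, the $V_i$ side. Then $\partial D$ is essential on $\partial V_i$ and bounds a disk in $V_i\setminus L$. It must be a meridian, since a longitude does not bound in $V_i$, so $D$ is a meridional disk missing the link, contradicting positive wrapping number. For the exterior side the same argument applies in $W_i$. The non-boundary-parallel part follows because being parallel to $\partial N(L_j)$ forces the side containing $L_j$ to consist of $L_j$ alone as a core, that is, wrapping number one and unknotted. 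When the side contains at least two components, as in the $n\ge 3$ and middle cases, this is impossible.
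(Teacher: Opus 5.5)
Your proposal is correct and is essentially the paper's own proof, which simply reads off the three cases from Propositions~\ref{Pro1}, \ref{Pro2} and \ref{Pro3}; these in turn rest on Remarks~\ref{rem1}, \ref{rem:symmetries} and Lemma~\ref{lem:additive}, as you describe. Your extra checks and your sketch of the incompressibility and non-boundary-parallel arguments are sound supplements. Note that the ``at least two components'' steps rely on the standing assumption of Section~\ref{sec:essential} that every $L_i$ is nonempty.
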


\begin{proof}
The result follows from Propositions~\ref{Pro1}, \ref{Pro2}, and \ref{Pro3}.
\end{proof}

\begin{definition}
Let $L\subset S^3$ be a link. A $2$-sphere $F$ properly embedded in
$S^3\setminus N(L)$ is essential if it does not bound a $3$-ball in $S^3\setminus N(L)$.
\end{definition}

A link $L$ is called irreducible if there is no essential $2$-sphere in
$S^3\setminus N(L)$.

\begin{theorem}\label{thm:n>3}
If $n\ge 4$, then the link $L=L_1\cup\cdots\cup L_n$ contains an
essential sphere or an essential torus in its exterior. In particular, $L$ is not
hyperbolic.
\end{theorem}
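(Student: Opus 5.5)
We work with the standing assumptions of this section: every $L_i$ is nonempty, and the nested unknotted solid tori $V_1\subset\cdots\subset V_{n-1}$ share a common core. The argument is a case analysis on wrapping numbers. In every case we either exhibit an essential torus via Theorem~\ref{th:char_essentialtori}, or exhibit an essential sphere. The final claim then follows from Corollary~\ref{cor:essential_torus_obstruction} in the torus case, and from Theorem~\ref{Thurston} in the sphere case, since an essential sphere means the exterior is reducible.

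\emph{Step 1: a sublink of wrapping number zero gives an essential sphere.} Suppose some $L_j$ has wrapping number zero in $V_j$, with $j<n$. Then there is a meridional disk $D$ of $V_j$ disjoint from $L_j$. Cutting $V_j$ along $D$ shows that $L_j$ lies in a $3$-ball $B\subset \operatorname{int}(V_j)\setminus V_{j-1}$; here we isotope $D$ to meet $\partial V_{j-1}$ in a meridian, as in the proof of Lemma~\ref{lem:additive}, and then tube across $V_{j-1}$. The sphere $\partial B$ separates $L_j\neq\emptyset$ from the other $L_k$, and since $n\ge 2$ that side is nonempty. Hence $\partial B$ bounds no ball in the exterior of $L$ and is essential.

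The same argument handles $L_n$ when its wrapping number in $W_{n-1}$ is zero, by applying the symmetry of Remark~\ref{rem:symmetries}. The step I expect to need the most care is ensuring the ball avoids the inner tori. Because $V_{j-1}$ is a neighbourhood of the core, I would instead take $D$ minimal with respect to intersections with $\partial V_{j-1}$ and argue using the annulus $V_j\setminus\operatorname{int}(V_{j-1})$ together with $L_j$.

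\emph{Step 2: all wrapping numbers positive gives an essential torus.} Now assume every $L_i$ has positive wrapping number in its own torus; for $L_n$, this means in $W_{n-1}$. Since $n\ge4$, the index $i=2$ satisfies $1<2<n-1$. Apply Theorem~\ref{th:char_essentialtori}(ii) at $i=2$. Its first condition holds because $L_1$ has positive wrapping number in $V_1$, and by Remark~\ref{rem1} also in $V_2$. Its second condition holds because $L_n$ has positive wrapping number in $W_{n-1}$, and by Remark~\ref{rem1} applied to the reversed nesting of Remark~\ref{rem:symmetries}, also in $W_2$. Therefore $\partial V_2$ is essential.

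Combining Steps 1 and 2, every case yields an essential sphere or an essential torus, so $L$ is not hyperbolic. The only delicate point is Step 1's construction of the splitting sphere. The hypothesis $n\ge4$ is used exactly to guarantee that a middle index $1<i<n-1$ exists.
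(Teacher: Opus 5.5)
\textbf{There is a genuine gap.} Your Step 1 is false for the intermediate indices $2\le j\le n-1$, and no extra care fixes it. A meridional disk $D$ of $V_j$ disjoint from $L_j$ must cross $V_{j-1}$. Cutting the thickened torus $V_j\setminus\operatorname{int}(V_{j-1})$ along the annulus $D\setminus\operatorname{int}(V_{j-1})$ gives a solid torus whose core is a meridian of $V_{j-1}$, not a ball. $L_j$ can wind around that core, and neither minimizing $D\cap\partial V_{j-1}$ nor tubing across $V_{j-1}$ changes this.

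Here is a concrete counterexample with $n=4$:
\begin{itemize}
\item $L_1$ is the core of $V_1$;
\item $L_2$ is a meridian curve of $V_1$ pushed into $\operatorname{int}(V_2)\setminus V_1$;
\item $L_3$ is a preferred longitude of $V_2$ pushed into $\operatorname{int}(V_3)\setminus V_2$;
\item $L_4$ is the core of $W_3$.
\end{itemize}
$L_2$ misses a meridional disk of $V_2$, so your Step 1 is triggered. Yet the exterior is irreducible. Indeed, $S^3\setminus N(L_1\cup L_4)\cong T^2\times I$, and $L_2$, $L_3$ are homotopically essential there. Hence any sphere in $E(L)$ bounds a ball in $T^2\times I$ that contains no component. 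So the essential sphere you assert does not exist. The theorem still holds in this example, but via $\partial V_2$.

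\textbf{The repair is immediate.} Step 2 only uses $\operatorname{wrap}(L_1)>0$ in $V_1$ and $\operatorname{wrap}(L_n)>0$ in $W_{n-1}$; it never uses the intermediate components. So run Step 1 only for $L_1\subset V_1$ and $L_n\subset W_{n-1}$. These are genuine solid tori. Cutting along a meridional disk that misses the component gives a ball containing it and disjoint from all other components, and its boundary is an essential sphere. In the remaining case, Theorem~\ref{th:char_essentialtori}(ii) at $i=2$ applies exactly as you say.

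\textbf{Comparison with the paper.} With this fix your argument has the same shape as the paper's: either an essential sphere, or $\partial V_2$ is essential. The difference is that the paper tests $L_1$ in $V_2$ and $L_3$ in $W_2$. For $n\ge 4$ its $L_3$-step runs into the same thickened-torus issue. Take $L_1,L_2,L_3$ to be parallel preferred longitudes of the common core, one in each of $V_1$, $V_2\setminus V_1$, $V_3\setminus V_2$, and $L_4$ the core of $W_3$. Then $L_3$ has wrapping number zero in $W_2$, but $E(L)\cong P\times S^1$ with $P$ a four-holed sphere, which is irreducible. So the paper's claimed sphere does not exist there either. Your repaired version, which uses $L_n$ in the solid torus $W_{n-1}$ instead of $L_3$, avoids this problem.
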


\begin{proof}
Assume that $n\ge 4$. We analyze the position of the links $L_i$ with
respect to the solid tori $V_2$ and $S^3\setminus V_2$.

First, consider the link $L_1$. If the wrapping number of $L_1$ in $V_2$ is zero,
then the wrapping number of $L_1$ in $V_1$ is zero as well. Since $L_1$ and $L_2$
are separated by $\partial V_1$, then $L_1$ is contained in a $3$-ball in $V_1$. The
boundary of this ball is a sphere that separates $L_1$ from the remaining link
components of the link in $S^3\setminus V_2$, and hence it is an essential sphere in the
exterior of $L$.

Thus, now we may assume that the wrapping number of $L_1$ in $V_2$ is
non-zero. It follows that $\partial V_2$ is incompressible on the side containing $L_1$.
Moreover, since at least two components of $L$ lie inside $V_2$, the torus $\partial V_2$ is
not boundary parallel on this side.

Next, consider $L_3\subset S^3\setminus V_2$. If the wrapping number of $L_3$ in $S^3\setminus V_2$ is
zero, then $L_3$ is contained in a $3$-ball disjoint from the remaining components
of the link in $V_2$. As before, this yields an essential sphere in the exterior of
$L$.

Now we may assume that the wrapping number of $L_3$ in $S^3\setminus V_2$ is non-zero.
By the same argument, $\partial V_2$ is incompressible and not boundary-parallel on
this side as well.

Hence, either an essential sphere appears in the above cases, or else $\partial V_2$ is
incompressible and not boundary parallel on both sides. In the latter case,
$\partial V_2$ is an essential torus in the exterior of $L$.

Therefore, it follows from Theorem~\ref{Thurston} that $L$ is not hyperbolic.
\end{proof}

\section{Links of Mixed Polynomials and their Complements.}\label{sec:results}

\subsection{Wrapping and winding numbers as multiplicities}
We now return to the specific situation of links of mixed singularities. As seen in Section~\ref{sec:background} there is a natural structure of nested solid tori $V_i$, whose boundaries $\partial V_i$ divide the link $L_f$ into various components $L_i\subset V_i$. In particular, each $\partial V_i$ is an embedded torus in the link complement $S^3\backslash L_f$. In the previous section, we have seen that the essentiality of each $\partial V_i$ can be analyzed using the wrapping numbers of the different links $L_i$. In this section, we develop tools to study the wrapping number of $L_i$ in terms of data from the Newton boundary.

We are interested in estimating the wrapping numbers of the components of $L_f$ within the solid tori $V_i$ and $W_i=S^3\backslash \text{int}(V_i)$. Since the wrapping number of a link is bounded from below by the absolute value of its winding number, controlling the latter is sufficient to guarantee the existence of essential tori in $E(L_f):=S^3\backslash L_f$ via Theorem~\ref{th:char_essentialtori}. By Remark~\ref{rem1} and Lemma~\ref{lem:additive} calculating the wrapping and winding numbers of $L_f$ in $V_i$ and $W_i$ reduces to analyzing the links $L_1\cup L_2\cup\ldots\cup L_i\subset V_i$ and $L_{i+1}\cup L_{i+2}\cup\ldots\cup L_N\subset W_i$.  

The winding number of a link, in contrast to its wrapping number, depends on the orientation of the link. We define an orientation of each $L_i$ using the local structure of $g_i$ around $L_i$. The assumption that $f$ is non-degenerate implies that all points on $L_i$ are regular points of $g_i$. By continuity there exists a tubular neighbourhood $U$ of $L_i$ where $g_i$ is also regular and since the image of $L_i$ is 0, there exists local coordinates $(x,y,\alpha)$ of $U\backslash L_i$ in which the argument map $\arg(g_i)=\tfrac{g_i}{|g_i|}$ takes the form $\arg(g_i)(x,y,\alpha)=\arg(x+\rmi y)=\tfrac{x+\rmi y}{\sqrt{x^2+y^2}}$. Here $\alpha$ corresponds to a coordinate along a component of $L_i$ and $(x,y)$ are the coordinates of a disk that is transverse to $L_i$ with $(x=0,y=0)$ corresponding to the intersection point between $L_i$ and the disk. The behaviour of $\arg{g_i}$ in a neighbourhood of $L_i$ is thus exactly that of an open book (even if $L_i$ is not necessarily a fibered link), see Figure~\ref{fig:openbook}a). The level surfaces of $\arg(g_i)$ in $U_i\backslash L_i$ are annuli with an orientation given by the normal vector $\nabla\arg(g_i)$. Since each component of $L_i$ is part of the boundary of one of these annuli, their orientations induce an orientation on each component of $L_i$ and thus an orientation on $L$.

\begin{figure}[h]
    \centering
    \begin{subfigure}[b]{0.30\textwidth}
        \includegraphics[height=3cm]{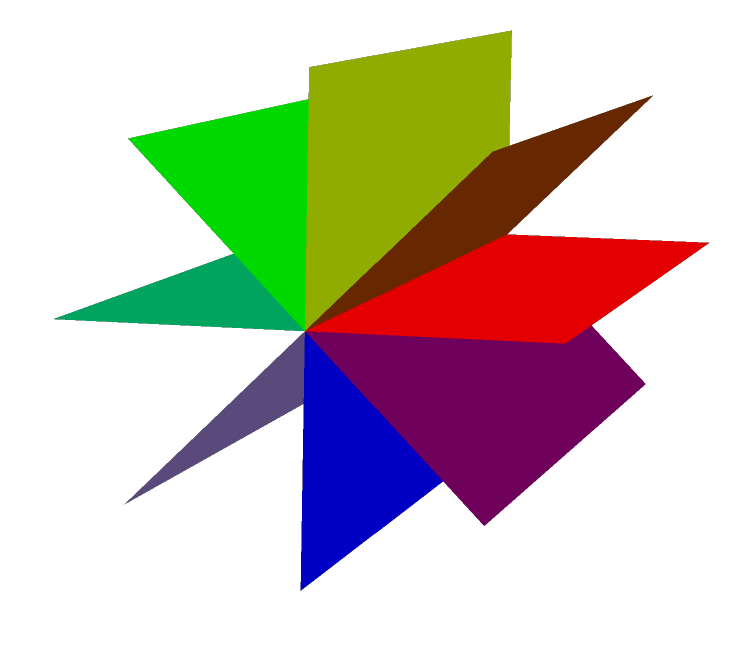}
        \caption{}
    \end{subfigure}
    \begin{subfigure}[b]{0.30\textwidth}
        \includegraphics[height=3cm]{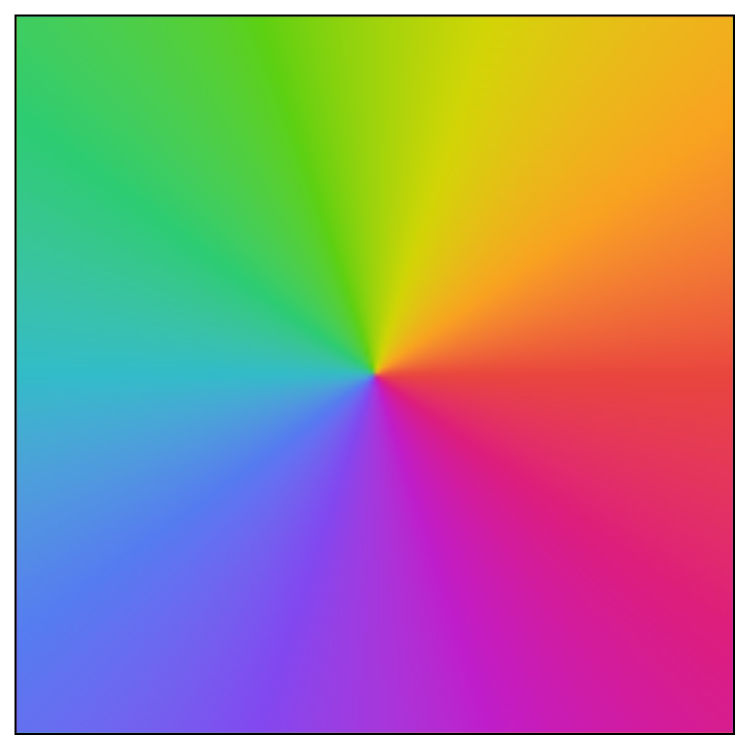}
        \caption{}
    \end{subfigure}
    \begin{subfigure}[b]{0.30\textwidth}
        \includegraphics[height=3cm]{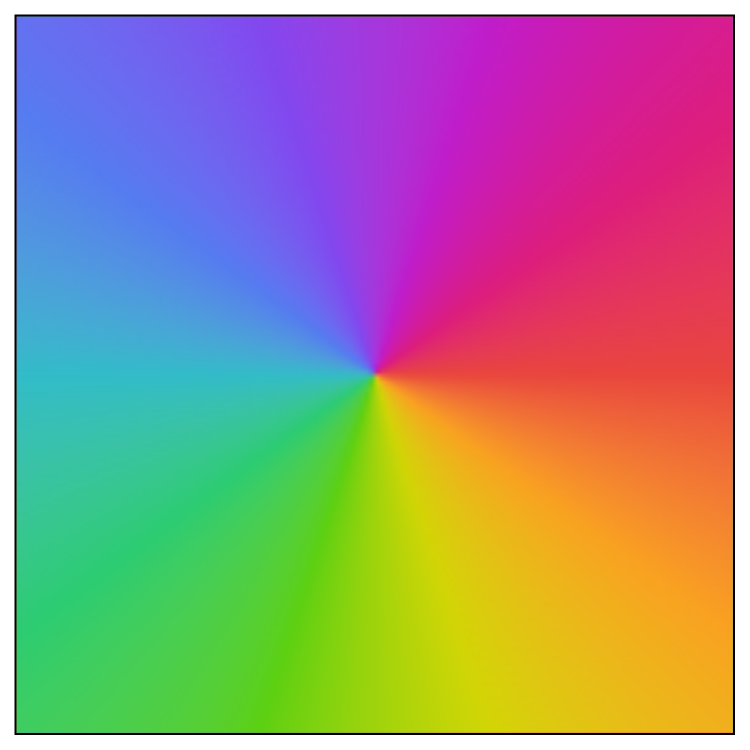}
        \includegraphics[height=3cm]{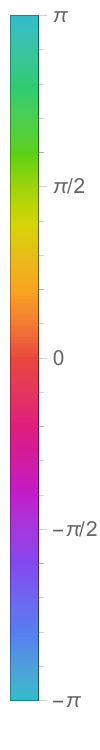}
        \caption{}
    \end{subfigure}
    \caption{(A) The behaviour of $\arg(g_i)$ in a tubular neighbourhood of $L_i$. Each level set is a surface, all of which have the common boundary $L_i$. (B) A positive intersection between $L_i$ and a plane $t=t_*$. The shown colours are the values of $\arg(g_i)(\cdot,t_*)$. The intersection occurs where the different colours meet, since $g_i(L_i)=0$ does not have a well-defined argument. (C) A negative intersection between $L_i$ and a plane $t=t_*$.}
    \label{fig:openbook}
\end{figure}

Now consider a fixed value of $t_*\in[0,2\pi]$ and the restriction of $g_i(u,\rme^{\rmi t})$ to the plane $t=t_*$. Assume that the intersections between $L_i$ and this plane are transverse. Then we can deduce the sign of each intersection point (with respect to the orientation defined above) from the winding of $\arg(g_i)$ around the intersection point, see Figure~\ref{fig:openbook}b) and c). Going along a small loop around an intersection point we encounter (by the discussion above) all values of $\arg(g_i)$ between 0 and $2\pi$ either strictly increasing (corresponding to a positive intersection) or strictly decreasing (corresponding to a negative intersection). In other words, the topological degree of $\arg(g_i(u, \rme^{\rmi t_*}))|_{\partial D_\varepsilon}$ with $D_\varepsilon$ being a disk of small radius $\varepsilon$ centered at the intersection point is either $+1$ or $-1$. The sign of the degree is exactly the sign of the intersection with respect to the orientation of $L_i$ defined above. 

For a link $L$ in $V_i$ we denote its winding number in $V_i$ by $w(L)$ and for a link $L$ in $W_i=S^3\backslash \text{int}(V_i)$ we denote its winding number in $W_i$ by $w'(L)$. %{\color{red}BB:maybe introduce this notation already in earlier sections.}
Recall that for every mixed polynomial $f:\mathbb{C}^2\to\mathbb{C}$ with $N$ 1-faces in its Newton boundary, we have defined functions $g_i:\mathbb{C}\times S^1\to\mathbb{C}$ as in \eqref{eq:def_gi}. The links $L_i$ are the zeros of $g_i$ with non-zero $u$-coordinate. By abuse of notation we also write $L_i$ for the corresponding link in $V_i\backslash V_{i-1}$, identifying $\mathbb{C}^*\times S^1$ with $V_i\backslash V_{i-1}$ (or $\mathbb{C}^*\times S^1$ with $V_1\backslash\alpha$ in the case of $i=1$) as in Section~\ref{sec:background}. In particular, the wrapping and winding numbers of $L_i$ in $V_i$ are equal to the wrapping and winding numbers of $L_i$ in the domain of $g_i$, that is, $\mathbb{C}\times S^1$.

For every fixed $t$ and $\varphi$ we define the mixed univariate polynomials  $g_i^t(u, \bar{u}) := g_i(u, \rme^{\rmi t})$.
For every root $\alpha\in\mathbb{C}$ of a univariate mixed polynomial $g:\mathbb{C}\to\mathbb{C}$ we define its multiplicity with sign $\mathrm{m}_{\mathrm{s}}(g,\alpha)$ as the mapping degree (also called rotation number) of the function 
 \begin{equation}\label{eq:argumentfunction}
     \frac{g}{|g|}:S^1_{\varepsilon}(\alpha) \to S^1,  z \mapsto \frac{g(u,\bar{u})}{|g(u,\bar{u})|}
 \end{equation}
for a sufficiently small $\varepsilon$ and $S^1_{\varepsilon}(\alpha):=\{z \in \mathbb{C} : |z-\alpha|=\varepsilon\}.$ For a simple root, we have $\mathrm{m}_{\mathrm{s}}(g,\alpha)=\pm 1$. If it is $+1$ (resp. $-1$), $\alpha$ is said $\textit{positive simple root}$ (resp. \textit{negative simple root}).

Note that the algebraic intersection number of $L_i\subset V_i$ with the section $\mathbb{C}\times\{\rme^{\rmi t}\}$ is equal to the sum of the multiplicities with sign of the non-zero roots of $g_i^t$. In other words,
\begin{equation}\label{eqwindingnumber}
w(L_i)=\sum_{\alpha \in V(g_i^t)\setminus \{0\}} \mathrm{m}_{\mathrm{s}}(g_i^t,\alpha),
\end{equation}
This value in \eqref{eqwindingnumber} does not depend on $t$ because the winding number is the algebraic intersection number between the link and any meridional disk. 

Note that  
\begin{equation}\label{eq:multCminuszero}
\sum_{\alpha \in V(g_i^t)\setminus \{0\}} \mathrm{m}_{\mathrm{s}}(g_i^t,\alpha)=\mathrm{m}_{\mathrm{s}, \mathrm{tot}}(g_i^t)-\mathrm{m}_{\mathrm{s}}(g_i^t,0),
\end{equation}
where \(\mathrm{m}_{\mathrm{s}, \mathrm{tot}}(g_i^t):= \sum_{\alpha \in V(g_i^t)} \mathrm{m}_{\mathrm{s}}(g_i^t,\alpha)\) is 
\textit{the total multiplicity with sign.}
Observe that $\mathrm{m}_{\mathrm{s}, \mathrm{tot}}(g_i^t)$ can also be interpreted as the degree of the restriction of $\tfrac{g_i^t}{|g_i^t|}$ to a very large circle.

Analogously to $g_i$ and $g_i^t$ we can define
\begin{equation}
    g_{\underline{i}}(\rme^{\rmi \varphi},v)=f_{\Delta^1_i}(\rme^{\rmi \varphi},\rme^{-\rmi \varphi},v,\bar{v}).\label{eq:def_gunderi}
\end{equation}
 and for every fixed $\varphi$ we define $g_{\underline{i}}^\varphi(v, \bar{v}) := g_{\underline{i}}(\rme^{\rmi \varphi}, v)$. Note that the zeros of $g_{\underline{i}}$ with non-zero $v$-coordinate are (again by convenience, $\Gamma$-niceness and non-degeneracy of $f$, see Lemma~\ref{lemma:Li}) compact subsets of $S^1\times\mathbb{C}^*$. 

\begin{lemma}\label{lem:isotopic}
Let $L_{\underline{i}}$ be the set of zeros of $g_{\underline{i}}:S^1\times\mathbb{C}\to \mathbb{C}$ with non-zero $v$-coordinate and let $L_i$ be the set of zeros $g_i:\mathbb{C}\times S^1\to\mathbb{C}$ with non-zero $u$-coordinate. Then, considering both $S^1\times\mathbb{C}^*$ and $\mathbb{C}^*\times S^1$ as a thickened torus, $L_{\underline{i}}$ and $L_i$ are isotopic.
\end{lemma}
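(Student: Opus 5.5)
Both $L_i$ and $L_{\underline{i}}$ come from the single zero set
\[
Z_i:=V(f_{\Delta^1_i})\cap(\mathbb{C}^*)^2 .
\]
The plan is to show that each is a rescaled slice of $Z_i$, and that $Z_i$ is a product of such a slice with an interval.

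\textbf{Weighted homogeneity.} Write $\Delta^1_i$ with primitive positive weight vector $P_i=(p,q)$. Every monomial $z^{\boldsymbol\nu}\bar z^{\boldsymbol\mu}$ with $\boldsymbol\nu+\boldsymbol\mu\in\Delta^1_i$ has the same weighted degree $d=p(\nu_1+\mu_1)+q(\nu_2+\mu_2)$. Hence, for every $s>0$,
\[
f_{\Delta^1_i}(s^{p}u,s^{q}v)=s^{d}f_{\Delta^1_i}(u,v),
\]
where only the moduli are scaled and the arguments are untouched. So $Z_i$ is invariant under the $\mathbb{R}_{>0}$-action $s\cdot(u,v)=(s^pu,s^qv)$.

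\textbf{Two transverse slices.} Consider the hypersurfaces $\Sigma_v=\{|v|=1\}$ and $\Sigma_u=\{|u|=1\}$ in $(\mathbb{C}^*)^2$. Each orbit $s\mapsto(s^p|u|,s^q|v|)$ meets $\Sigma_v$ exactly once, at $s=|v|^{-1/q}$, and meets $\Sigma_u$ exactly once, at $s=|u|^{-1/p}$. This gives a diffeomorphism
\[
Z_i\cong (Z_i\cap\Sigma_v)\times\mathbb{R},
\]
and similarly for $\Sigma_u$.

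By definition, $Z_i\cap\Sigma_v=L_i\subset\mathbb{C}^*\times S^1$ and $Z_i\cap\Sigma_u=L_{\underline{i}}\subset S^1\times\mathbb{C}^*$. Both are compact by Lemma~\ref{lemma:Li} and the analogous statement for $g_{\underline{i}}$.

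\textbf{Explicit identification.} The identification of both thickened tori with $T^2\times\mathbb{R}$ is through the arguments $(\varphi,t)$ together with a radial coordinate: $\log|u|$ on $\mathbb{C}^*\times S^1$ and $-\log|v|$ on $S^1\times\mathbb{C}^*$. The sign is fixed so that both orderings agree with the nesting, with the $u$-core $\alpha$ on one side. Moving a point of $L_{\underline{i}}$ along its orbit to $\Sigma_v$ preserves $(\varphi,t)$ and sends $(e^{i\varphi},re^{it})\mapsto(r^{-p/q}e^{i\varphi},e^{it})$. In log coordinates this is the map $\rho\mapsto-\tfrac pq\rho$ on the radial factor, which is orientation-compatible with the chosen identifications. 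So $L_{\underline{i}}$ is carried onto $L_i$ by the restriction of a diffeomorphism
\[
\Phi:T^2\times\mathbb{R}\to T^2\times\mathbb{R},\qquad (\theta,\rho)\mapsto(\theta,-\tfrac pq\rho),
\]
composed with the sign convention.

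\textbf{Isotopy.} Rather than transporting $L_{\underline{i}}$ via $\Phi$, I will build the isotopy directly inside $T^2\times\mathbb{R}$ by sliding along orbits. For $\lambda\in[0,1]$, consider the hypersurfaces $\{|u|^{\lambda/p}|v|^{(1-\lambda)/q}=1\}$. These interpolate between $\Sigma_u$ ($\lambda=1$) and $\Sigma_v$ ($\lambda=0$), and each is transverse to every orbit. The point is that the function $|u|^{\lambda/p}|v|^{(1-\lambda)/q}$ equals $s$ times its initial value along an orbit, so it is strictly increasing.

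The slices $Z_i\cap\{\cdot=1\}$ therefore form a smooth one-parameter family of compact links. Projecting them to $T^2\times\mathbb{R}$ via $(\varphi,t,\log|u|-\log|v|)$ gives a continuous family of embeddings of the same compact 1-manifold, starting at $L_{\underline{i}}$ and ending at $L_i$. Here $(\varphi,t,\log|u|-\log|v|)$ is a coordinate consistent with both identifications up to a positive rescaling of the $\mathbb{R}$ factor. An isotopy of compact submanifolds then extends to an ambient isotopy by the isotopy extension theorem.

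\textbf{Main obstacle.} I expect the main obstacle to be bookkeeping the orientation/sign of the radial coordinate. One must check that identifying both $S^1\times\mathbb{C}^*$ and $\mathbb{C}^*\times S^1$ with the same thickened torus (the region $X_i$ between the nested tori, where small $|u|$ means close to $\alpha$, i.e.\ large $|v|$) matches $\log|u|$ with $-\log|v|$. The interpolating coordinate $\log|u|-\log|v|$ is chosen to make this transparent.
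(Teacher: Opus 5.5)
Your proposal is correct and follows essentially the paper's route: both arguments use the weighted homogeneity of $f_{\Delta^1_i}$ to slide points along the argument-preserving $\mathbb{R}_{>0}$-orbits between transverse slices of $V(f_{\Delta^1_i})\cap(\mathbb{C}^*)^2$, with small $|u|$ matched to large $|v|$. The paper passes through the intermediate slice $S^3\setminus(\alpha\cup\beta)$ via $h_1,h_2$ and only asserts that the argument-preserving composite $h_1\circ h_2^{-1}$ is isotopic to the identity, whereas your interpolating slices $\{|u|^{\lambda/p}|v|^{(1-\lambda)/q}=1\}$ supply that isotopy explicitly: in your coordinate $\log|u|-\log|v|$ it is just the radial rescaling $\rho\mapsto\bigl((1-\lambda)+\lambda\tfrac{q}{p}\bigr)\rho$.
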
 
\begin{proof}
The face function $f_{\Delta_i^1}$ is radially weighted homogeneous, i.e., there exist positive integers $p_1,p_2$ and $d$ such that $f(\lambda^{p_1}u,\lambda^{p_2}v)=\lambda^df(u,v)$ for all $(u,v)\in\mathbb{C}^2$ and all $\lambda\in\mathbb{R}$. 

Consider $S^3\backslash(\alpha\cup\beta)$, where $\alpha=\{(u,v)\in S^3:u=0\}$, $\beta=\{(u,v)\in S^3:v=0\}$. This space is diffeomorphic to $\mathbb{C}^*\times S^1$ via the diffeomorphism $h_1:S^3\backslash(\alpha\cup\beta)\to\mathbb{C}^*\times S^1$, $h_1(u,r\rme^{\rmi t})=(r^{-p_1/p_2}u,\rme^{\rmi t})$. Since $f_{\Delta_i^1}$ is radially weighted homogeneous, the diffeomophism $h_1$ sends $V(f_{\Delta_i^1})\cap (S^3\backslash\{\alpha\cup\beta\})$ to $V(f_{\Delta_i^1})\cap (S^1\times\mathbb{C}^*)=L_i$ (simply take $\lambda=r^{-1/p_2}$).\\

Similarly, the diffeomorphism $h_2:S^3\backslash(\alpha\cup\beta)\to S^1\times \mathbb{C}^*$, $h_2(R\rme^{\rmi\varphi},v)=(\rme^{\rmi \varphi},R^{-p_2/p_1}v)$ sends $V(f_{\Delta_i^1})\cap (S^3\backslash\{\alpha\cup\beta\})$ to $V(f_{\Delta_i^1})\cap (\mathbb{C}^*\times S^1)=L_{\underline{i}}$ (simply take $\lambda=R^{-1/p_1}$).\\

The composition $h_1\circ h_2^{-1}$ is an orientation-preserving diffeomorphism of the thickened torus to itself, mapping $L_{\underline{i}}$ to $L_i$. Since it preserves the arguments of the two complex coordinates, $h_1\circ h_2^{-1}$ is isotopic to the identity map on the thickened torus, and therefore the two links are isotopic.
\end{proof}
 
Note that in the construction of $L_f$ in Section~\ref{sec:background} where we identify $X_i$ with $\mathbb{C}^*\times S^1$, we could just as easily identify $X_i$ with $S^1\times\mathbb{C}^*$ and by Lemma~\ref{lem:isotopic} obtain the same link. Independent of its ambient space ($\mathbb{C}^*\times S^1$, $S^1\times \mathbb{C}$ or $X_i$, all of which are thickened tori), we will write $L_i$ for the zeros of $g_i$ with non-zero $u$-coordinate, the zeros of $g_{\underline{i}}$ with non-zero $v$-coordinate or the corresponding curves in $X_i$. In particular, the wrapping and winding numbers of $L_i$ in $W_{i-1}$ are equal to the wrapping and winding numbers of the zeros of $g_{\underline{i}}$ with non-zero $v$-coordinate in $S^1\times \mathbb{C}$.

We can thus calculate the algebraic intersection number of $L_{i}$ with a section $\{\rme^{\rmi \varphi}\} \times \mathbb{C}$ by \begin{equation}\label{eqwindingnumber2}
w'(L_i):=\sum_{\alpha \in V(g_{\underline{i}}^\varphi)\setminus \{0\}} \mathrm{m}_{\mathrm{s}}(g_{\underline{i}}^\varphi,\alpha)=\mathrm{m}_{\mathrm{s}, \mathrm{tot}}(g_{\underline{i}}^\varphi)-\mathrm{m}_{\mathrm{s}}(g_{\underline{i}}^\varphi,0).
\end{equation}
Analogously to $w(L_i)$, this value is independent of $\varphi \in [0, 2\pi]$.

The dimension of the intersection $V(g^t_i) \cap \mathbb{C}^*$ is typically $0$ (the generic case), but can be $\pm 1$ (defining the dimension of the empty set to be -1). If $V(g^t_i) = \emptyset$, the total multiplicity with sign is defined as $0$ by convention. In this case, the multiplicity with sign at the origin is $0$ because the map \eqref{eq:argumentfunction} has a vanishing topological degree, consistent with the absence of roots.

By varying $t$ we can
 assume the generic case ($\text{dim}=0$), thus $V(g^t_i)\setminus \{0\}$ is a finite set of roots. 
By the non-degeneracy of $f$, the roots of $g_i^t$ in $\mathbb{C}^*$ are simple (positive or negative). The total number of these roots is given by:
\begin{equation}\label{eq:numberofroot}
\begin{aligned}
\text{Number of roots of $g_i^t$ in } \mathbb{C}^* &= \left| \mathrm{m}_{\mathrm{s,tot}}(g^t_i) - \mathrm{m}_{\mathrm{s}}(g^t_i, 0) \right| + 2K \\
&= |w(L_i)|+2K, \qquad \qquad K \in \mathbb{N}_0.
\end{aligned}
\end{equation}
The term $K$ accounts for pairs of roots in $\mathbb{C}^*$ with opposite sign. While such pairs contribute zero to the algebraic sum  $\mathrm{m}_{\mathrm{s,tot}}(g^t_i) - \mathrm{m}_{\mathrm{s}}(g^t_i, 0)$ they count as two distinct points in the total number of roots in $\mathbb{C}^*$. Consequently, the term $2K$ in \eqref{eq:numberofroot} compensates for these ``cancelled" pairs.
Analogously, the total number of the roots of $g_{\underline{i}}^\varphi$ in $\mathbb{C}^*$ is given by
\begin{equation}\label{eq:numberofrootgbari}
\begin{aligned}
\text{Number of roots of $g_{\underline{i}}^\varphi$ in } \mathbb{C}^* &= \left| \mathrm{m}_{\mathrm{s,tot}}(g_{\underline{i}}^\varphi) - \mathrm{m}_{\mathrm{s}}(g_{\underline{i}}^\varphi, 0) \right| + 2K' \\
&= |w'(L_i)|+2K', \qquad \qquad K' \in \mathbb{N}_0.
\end{aligned}
\end{equation}

Let $h(u,\bar{u})$ be a mixed univariate polynomial. It is called \textit{homogeneous of degree $d$} if it satisfies the following identity:
 \[h(\lambda u, \lambda \bar{u})=\lambda^d h( u, \bar{u}), \ \lambda \in \mathbb{R}_{>0}. \]  

For any mixed univariate polynomial
\[h(u,\bar{u})=\sum_{\nu,\mu} c_{\nu,\mu} u^\nu \bar{u}^{\mu}\]
and every positive integer $\ell$, we define 
\[h_{\ell}(u,\bar{u})=\sum_{\nu+\mu=\ell} c_{\nu,\mu} u^\nu \bar{u}^{\mu}.\]
By definition, $h_{\ell}$ is a mixed homogeneous polynomial of degree $\ell$. Analogously to the complex case, any non-zero mixed univariate polynomial admits a decomposition into its non-zero homogeneous parts, that is, 
\[h(u,\bar{u})=h_{\underline{d}}(u,\bar{u})+\dots +h_{\bar{d}}(u,\bar{u}),\]
where  $\underline{d}:=\min\{\nu+\mu : c_{\nu,\mu}\neq 0\}$ and $\bar{d}:=\max\{\nu+\mu : c_{\nu,\mu}\neq 0\}$. The integers $\underline{d}$ and $\bar{d}$ are called \textit{minimal degree} and \textit{maximal degree} of $h$, respectively.
\begin{lemma}\label{lemma:windingnumber}
Let $f \colon \mathbb{C}^2 \to \mathbb{C}$ be a convenient and non-degenerate mixed polynomial that is $\Gamma$-nice. For each 1-face $\Delta^1_i \in \mathcal{P}(f)$, let $L_i$  be the associated links. The winding numbers of $L_i$ relative to $V_i$ and $W_{i-1}$ are given by:
\begin{equation}\label{eq:windingLi}
\begin{aligned}
    w(L_i) &= \mathrm{m}_{\mathrm{s}}(f^t_{\Delta_i}, 0) - \mathrm{m}_{\mathrm{s}}(f^t_{\Delta_{i-1}}, 0), \\
    w'(L_i) &= \mathrm{m}_{\mathrm{s}}(f^\varphi_{\Delta_{i-1}}, 0) - \mathrm{m}_{\mathrm{s}}(f^\varphi_{\Delta_i}, 0),
\end{aligned}
\end{equation}
where $\Delta_i \in \mathcal{P}_0(f)$. Furthermore, for each $i \in \{1,2, \dots, N\}$, the winding numbers relative to  $V_i$ and $W_{i-1}$ of the following sublinks of $L_f$ satisfy:
\begin{equation}\label{eq:windingnestedlinks}
\begin{aligned}
    w(L_1\cup L_2\cup\dots\cup L_i) &= \mathrm{m}_{\mathrm{s}}(f^t_{\Delta_i}, 0), \\
    w'(L_i\cup L_{i+1}\cup\ldots\cup L_N) &= \mathrm{m}_{\mathrm{s}}(f^\varphi_{\Delta_{i-1}}, 0).
\end{aligned}
\end{equation}
\end{lemma}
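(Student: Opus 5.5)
We will deduce the formulas from \eqref{eqwindingnumber} and \eqref{eq:multCminuszero}, which give $w(L_i)=\mathrm{m}_{\mathrm{s},\mathrm{tot}}(g_i^t)-\mathrm{m}_{\mathrm{s}}(g_i^t,0)$. It then remains to identify the two terms on the right with vertex data. Here $f^t_{\Delta_j}$ denotes the univariate mixed polynomial $u\mapsto f_{\Delta_j}(u,\bar u,\rme^{\rmi t},\rme^{-\rmi t})$, and the left and right endpoints of $\Delta^1_i$ are $\Delta_{i-1}$ and $\Delta_i$.

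\textbf{Step 1 (the two ends of $g_i^t$).} Write $g_i^t$ as a sum of its homogeneous parts in $u$. Because $\Delta^1_i$ is an edge joining $\Delta_{i-1}=(a_{i-1},b_{i-1})$ to $\Delta_i=(a_i,b_i)$ with $a_{i-1}<a_i$, the minimal-degree part is $f^t_{\Delta_{i-1}}$ and the maximal-degree part is $f^t_{\Delta_i}$, exactly as in the proof of Lemma~\ref{lemma:Li}. By Remark~\ref{rmk:niceness}(iii), neither part vanishes on $S^1\times\{t\}$.

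\textbf{Step 2 (degree comparison).} On a small circle $|u|=\varepsilon$, the term $R^{a_{i-1}}f_{\Delta_{i-1}}(\rme^{\rmi\varphi},\rme^{-\rmi\varphi},\rme^{\rmi t},\rme^{-\rmi t})$ dominates all the other terms, since it has the lowest power of $R$ and is bounded away from zero. A straight-line homotopy from $g_i^t$ to this term never vanishes, so
\[
\mathrm{m}_{\mathrm{s}}(g_i^t,0)=\deg\bigl(f^t_{\Delta_{i-1}}/|f^t_{\Delta_{i-1}}|\bigr)=\mathrm{m}_{\mathrm{s}}(f^t_{\Delta_{i-1}},0).
\]
The last equality holds because $f^t_{\Delta_{i-1}}$ is homogeneous, so its degree on every circle centred at $0$ is the same. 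On a large circle the top term dominates in the same way, giving $\mathrm{m}_{\mathrm{s},\mathrm{tot}}(g_i^t)=\mathrm{m}_{\mathrm{s}}(f^t_{\Delta_i},0)$; here we use that $f^t_{\Delta_i}$ has no zeros in $\mathbb{C}^*$, so its total multiplicity equals its multiplicity at $0$. Substituting these two identities yields the first formula in \eqref{eq:windingLi}.

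\textbf{Step 3 (the formula for $w'$).} The same argument applies to $g^\varphi_{\underline{i}}$ via \eqref{eqwindingnumber2}. Now the roles of the endpoints are swapped. In the $v$-exponent we have $b_{i-1}>b_i$, so the lowest-degree part is $f^\varphi_{\Delta_i}$ and the highest is $f^\varphi_{\Delta_{i-1}}$. This gives $w'(L_i)=\mathrm{m}_{\mathrm{s}}(f^\varphi_{\Delta_{i-1}},0)-\mathrm{m}_{\mathrm{s}}(f^\varphi_{\Delta_i},0)$. Lemma~\ref{lem:isotopic} justifies computing the winding number in $W_{i-1}$ from $g_{\underline i}$.

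\textbf{Step 4 (the sublink formulas).} Winding numbers are additive, and by Remark~\ref{rem1} the winding number of each $L_j$ is the same in $V_i$ for every $i\ge j$. The sum $\sum_{j\le i}w(L_j)$ therefore telescopes to $\mathrm{m}_{\mathrm{s}}(f^t_{\Delta_i},0)-\mathrm{m}_{\mathrm{s}}(f^t_{\Delta_0},0)$. Since $f_{\Delta_0}=c\,v^{b_0}$ (possibly with conjugates of $v$ only) involves no $u$, $f^t_{\Delta_0}$ is a nonzero constant and its multiplicity at $0$ is $0$. Symmetrically, $\sum_{j\ge i}w'(L_j)$ telescopes to $\mathrm{m}_{\mathrm{s}}(f^\varphi_{\Delta_{i-1}},0)-\mathrm{m}_{\mathrm{s}}(f^\varphi_{\Delta_N},0)$, and $f^\varphi_{\Delta_N}$ is a nonzero constant by convenience.

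\textbf{Main obstacle.} The key step is the dominance argument in Step 2. To carry it out, write $g_i^t(R\rme^{\rmi\varphi})=R^{a_{i-1}}\bigl(f_{\Delta_{i-1}}(\rme^{\rmi\varphi},\rme^{-\rmi\varphi},\rme^{\rmi t},\rme^{-\rmi t})+O(R)\bigr)$, and use the fact that the leading coefficient is bounded away from zero on the compact set $S^1\times S^1$, which is exactly what Remark~\ref{rmk:niceness}(iii) provides. Some care is also needed with orientation conventions, namely that the sign of the intersection with the section equals $\mathrm{m}_{\mathrm{s}}$, which the paper set up just before \eqref{eqwindingnumber}. The boundary cases $i=1$ and $i=N$, where $V_0$ and $W_N$ are degenerate, should be checked by hand.
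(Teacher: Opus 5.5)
Your proposal is correct and follows the same route as the paper. Both identify the lowest- and highest-degree homogeneous parts of $g_i^t$ and $g_{\underline{i}}^\varphi$ with the vertex polynomials $f^t_{\Delta_{i-1}},f^t_{\Delta_i}$ (resp.\ $f^\varphi_{\Delta_i},f^\varphi_{\Delta_{i-1}}$), read off $\mathrm{m}_{\mathrm{s}}(\cdot,0)$ and $\mathrm{m}_{\mathrm{s},\mathrm{tot}}$ from these parts, and telescope using that $f^t_{\Delta_0}$ and $f^\varphi_{\Delta_N}$ are nonvanishing constants. The only difference is that you prove the comparison step yourself, by the dominance and straight-line homotopy argument on small and large circles, whereas the paper cites Oka's Theorem~20 for it; also note that the nonvanishing of $f^\varphi_{\Delta_N}$ comes from non-degeneracy via Remark~\ref{rmk:niceness}(iii), not from convenience alone.
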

\begin{proof}
We have
\[
(g_i^t)_{\underline{d}}(u,\bar{u})
= f_{\Delta_{i-1}}(u,\bar{u},\rme^{\rmi t},\rme^{-\rmi t})
\quad\text{and}\quad
(g_i^t)_{\bar{d}}(u,\bar{u})
= f_{\Delta_i}(u,\bar{u},\rme^{\rmi t},\rme^{-\rmi t}).
\]
For \(i=0,1,\dots,N\), set
\begin{equation}\label{eq:tpoly}
f^t_{\Delta_i}(u,\bar{u})
:= f_{\Delta_i}(u,\bar{u},\rme^{\rmi t},\rme^{-\rmi t}).
\end{equation}
 Since $f$ is $\Gamma$-nice, convenient and non-degenerate, by Remark~\ref{rmk:niceness}(iii), we get that \((g_i^t)_{\bar{d}}\)  and  \((g_i^t)_{\underline{d}}\) have no roots in $\mathbb{C}^{*}$. Hence, 
by \cite[Theorem~20]{Oka2012}, 
\begin{equation}\label{eq:invms0total_i}
\begin{split}
\mathrm{m}_{\mathrm{s},\mathrm{tot}}(g_i^t)
&= \mathrm{m}_{\mathrm{s}}((g_i^t)_{\bar{d}},0)= \mathrm{m}_{\mathrm{s}}(f^t_{\Delta_i},0), \\
\mathrm{m}_{\mathrm{s}}(g_i^t,0)
&=\mathrm{m}_{\mathrm{s}}((g_i^t)_{\underline{d}},0)= \mathrm{m}_{\mathrm{s}}(f^t_{\Delta_{i-1}},0).
\end{split}
\end{equation}
Therefore, by \eqref{eqwindingnumber} and \eqref{eq:multCminuszero}, we obtain the desired equality for \(w(L_i)\).

Analogously, 
we set 
\begin{equation}\label{eq:varphipoly}
f^\varphi_{\Delta_i}(v,\bar{v})
:= f_{\Delta_i}(\rme^{\rmi \varphi},\rme^{-\rmi \varphi},v,\bar{v}).
\end{equation}
We have
\[
(g_{\underline{i}}^\varphi)_{\underline{d}}(v,\bar{v})
= f^\varphi_{\Delta_i}(v,\bar{v})
\quad \text{and} \quad
(g_{\underline{i}}^\varphi)_{\bar{d}}(v,\bar{v})
= f^\varphi_{\Delta_{i-1}}(v,\bar{v}).
\]
Again by Remark~\ref{rmk:niceness}(iii) and \cite[Theorem~20]{Oka2012},
\begin{equation}\label{eq:invms0total_underline_i}
\begin{split}
\mathrm{m}_{\mathrm{s},\mathrm{tot}}(g_{\underline{i}}^\varphi)
&= \mathrm{m}_{\mathrm{s}}(f^\varphi_{\Delta_{i-1}},0),\\
\mathrm{m}_{\mathrm{s}}(g_{\underline{i}}^\varphi,0)
&= \mathrm{m}_{\mathrm{s}}(f^\varphi_{\Delta_i},0).
\end{split}
\end{equation}
Therefore, by \eqref{eqwindingnumber2}, we obtain the desired equality for \(w'(L_i)\). 

To get \eqref{eq:windingnestedlinks}, note that, by \eqref{eq:windingLi}:
$$ w(L_1\cup L_2\cup\dots\cup L_i) = \sum_{j=1}^i w(L_j)=\mathrm{m}_{\mathrm{s}}(f^t_{\Delta_i}, 0)-\mathrm{m}_{\mathrm{s}}(f^t_{\Delta_0}, 0)  $$ and  
$$w'(L_i\cup L_{i+1}\cup\ldots\cup L_N)=\sum_{j=i}^Nw(L_j)= \mathrm{m}_{\mathrm{s}}(f^\varphi_{\Delta_{i-1}},0)-\mathrm{m}_{\mathrm{s}}(f^\varphi_{\Delta_N},0).$$
Due to the convenience and non-degeneracy of $f$ the mixed polynomials $f_{\Delta_0}$ and $f_{\Delta_N}$ are non-degenerate and semiholomorphic. Specifically, $f_{\Delta_0}$ is independent of $u$ and $\bar{u}$, while $f_{\Delta_N}$ is independent of $v$ and $\bar{v}$. Consequently, by Remark~\ref{rmk:niceness}(iii), $f^t_{\Delta_0}$ and $f^\varphi_{\Delta_N}$ are non-zero constant mixed univariate polynomials. Since they lack roots at the origin, it follows that:
$$\mathrm{m}_{\mathrm{s}}(f^t_{\Delta_0}, 0) = \mathrm{m}_{\mathrm{s}}(f^\varphi_{\Delta_N}, 0) = 0.$$ This implies the last part of the lemma.
\end{proof}
Note that in Section~\ref{sec:essential} we made the assumption that the different links $L_i$ in $V_i$ are all nonempty, while in the construction of $L_f$ in Section~\ref{sec:background} some of them could be empty. In order to apply the results from Section~\ref{sec:essential} we should thus determine if the link $L_i$ associated with a 1-face of the Newton boundary of a mixed polynomial is nonempty.

%{\color{red}(BB: I don't think the next result is really a proposition. We're saying that if $L_i$ has non-zero wrapping, then $L_i$ is not empty. Maybe it should be stated in terms of multiplicities instead. Then it becomes a less trivial.}
\begin{lemma}\label{cor:nonemptycriterion}
Let \(f:\mathbb{C}^2 \to \mathbb{C}\) be a convenient and non-degenerate mixed polynomial that is $\Gamma$-nice, and fix \(i \in \{1,2,\dots,N\}\). If
\[
\max\bigl\{|\mathrm{m}_{\mathrm{s}}(f^t_{\Delta_i}, 0) - \mathrm{m}_{\mathrm{s}}(f^t_{\Delta_{i-1}}, 0)|,\,|\mathrm{m}_{\mathrm{s}}(f^\varphi_{\Delta_{i-1}}, 0) - \mathrm{m}_{\mathrm{s}}(f^\varphi_{\Delta_i}, 0)|\bigr\} > 0,
\]
then \(L_i\) is nonempty.
\end{lemma}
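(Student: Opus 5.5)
The plan is to argue by contraposition, using the winding number formulas of Lemma~\ref{lemma:windingnumber}. By \eqref{eq:windingLi}, the two quantities in the hypothesis are exactly $|w(L_i)|$ and $|w'(L_i)|$: the winding numbers of $L_i$ in $V_i$ (equivalently in $\mathbb{C}\times S^1$) and in $W_{i-1}$ (equivalently in $S^1\times\mathbb{C}$). So the statement reduces to showing that if $L_i=\emptyset$, then both $w(L_i)=0$ and $w'(L_i)=0$.

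First step. Suppose $L_i$ is empty, i.e.\ $g_i$ has no zeros in $\mathbb{C}^*\times S^1$. Then for every $t$, the univariate mixed polynomial $g_i^t$ has no roots in $\mathbb{C}^*$. By \eqref{eqwindingnumber} and \eqref{eq:multCminuszero}, $w(L_i)$ is the sum of the signed multiplicities of the nonzero roots of $g_i^t$. This sum is empty, hence zero.

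For the intrinsic version of this, the total signed multiplicity is the degree of $g_i^t/|g_i^t|$ on a large circle, and $\mathrm{m}_{\mathrm{s}}(g_i^t,0)$ is the degree on a small circle. Since $g_i^t$ does not vanish on the closed annulus between these circles, the two degrees coincide by homotopy invariance of the degree. By \eqref{eq:invms0total_i}, this gives $\mathrm{m}_{\mathrm{s}}(f^t_{\Delta_i},0)=\mathrm{m}_{\mathrm{s}}(f^t_{\Delta_{i-1}},0)$.

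Second step. Show that $L_i=\emptyset$ also forces the zero set of $g_{\underline{i}}$ in $S^1\times\mathbb{C}^*$ to be empty. Lemma~\ref{lem:isotopic} gives this, since the diffeomorphisms $h_1,h_2$ both carry $V(f_{\Delta^1_i})\cap(S^3\setminus(\alpha\cup\beta))$ onto the respective zero sets, so one set is empty exactly when the other is. Alternatively, one can see it directly from weighted homogeneity: a zero $(e^{i\varphi},v)$ with $v\neq0$ rescales to a zero $(\rho e^{i\varphi},e^{it})$ with $\rho\neq 0$. Then the same annulus-degree argument, applied to $g_{\underline{i}}^\varphi$ together with \eqref{eq:invms0total_underline_i}, yields $\mathrm{m}_{\mathrm{s}}(f^\varphi_{\Delta_{i-1}},0)=\mathrm{m}_{\mathrm{s}}(f^\varphi_{\Delta_i},0)$.

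Hence both differences vanish, contradicting the hypothesis, so $L_i\neq\emptyset$. The only point needing care is the empty case: Lemma~\ref{lemma:windingnumber} relies on \cite[Theorem~20]{Oka2012} and on the formula \eqref{eqwindingnumber}, which implicitly assume we may pick a generic $t$. When $L_i=\emptyset$, every $t$ is generic, and the convention stated before \eqref{eq:numberofroot} assigns $0$ to the empty sum. I expect this to be the main, though minor, obstacle, and the direct homotopy-of-degree argument avoids any reliance on conventions. The boundary indices $i=1$ and $i=N$ are covered by the same reasoning, since Remark~\ref{rmk:niceness}(iii) also applies to $\Delta_0$ and $\Delta_N$.
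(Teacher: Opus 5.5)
Your proposal is correct and is essentially the paper's argument. Via Lemma~\ref{lemma:windingnumber}, the hypothesis says $w(L_i)\neq 0$ or $w'(L_i)\neq 0$, and a nonzero winding number forces $g_i^t$ or $g_{\underline{i}}^\varphi$ to have roots in $\mathbb{C}^*$. The paper reads this off directly from the root counts \eqref{eq:numberofroot} and \eqref{eq:numberofrootgbari}, whereas you argue by contraposition, and your explicit check that the zero sets of $g_i$ in $\mathbb{C}^*\times S^1$ and of $g_{\underline{i}}$ in $S^1\times\mathbb{C}^*$ are simultaneously empty, together with the annulus-degree argument, just makes precise what the paper leaves implicit.
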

\begin{proof}
By Lemma~\ref{lemma:windingnumber}, we get  $|w(L_i)|>0$ or $|w'(L_i)|>0$.  Consequently, by \eqref{eq:numberofroot} and \eqref{eq:numberofrootgbari}, $L_i$ is nonempty. 
\end{proof}
\begin{remark}
  The above result provides sufficient conditions for \(L_i\) to be nonempty; however, these are not necessary conditions to get it. Indeed, consider the mixed polynomial
\[
f(u,\bar{u},v,\bar{v})=(u-v)\overline{(u+v)},
\]
for which \(w(L_1)=w'(L_1)=0\), while \(L_1\) is nonempty. The winding number can be computed by observing that the links associated with \(u-v\) and \(\overline{(u+v)}\) are distinct longitudes with opposite orientations. 
\end{remark}
We define $I_f:=\{i \in \{1,2,\dots, N\} \mid L_{i} \text{ is nonempty}\}$. 

\begin{remark}
Let $h(u, \bar{u})$ be a mixed univariate polynomial with $u = x_1 + \rmi x_2$. We consider the Wirtinger derivatives $\frac{\partial h}{\partial u}$ and $\frac{\partial h}{\partial \bar{u}}$ that are defined through the partial derivatives $\frac{\partial h}{\partial x_1}$ and $\frac{\partial h}{\partial x_2}$ as:
$$\frac{\partial h}{\partial u} = \frac{1}{2} \left( \frac{\partial h}{\partial x_1} - \rmi \frac{\partial h}{\partial x_2} \right) \quad \text{and} \quad \frac{\partial h}{\partial \bar{u}} = \frac{1}{2} \left( \frac{\partial h}{\partial x_1} + \rmi \frac{\partial h}{\partial x_2} \right).$$
By \cite[Proposition~15]{Oka2012}, if \(h\) is a mixed univariate polynomial, then a simple root \(\alpha\) is positive (resp. negative) if and only if
\[
\left|\frac{\partial h}{\partial u}(\alpha)\right|
-
\left|\frac{\partial h}{\partial \bar{u}}(\alpha)\right|
>0
\quad
(\text{resp. }<0).
\]
Consequently, if \(g_i\) is \(u\)- (resp. \(\bar{u}\)-) semiholomorphic, then every simple root of \(g_i^t\) is positive (resp. negative). The same applies to \(g_{\underline{i}}^\varphi\) under the assumption of being \(v\)- (resp. \(\bar{v}\)-) semiholomorphic. It follows that, under these assumptions, the absolute value of the winding number and the wrapping number of \(L_i\) in \(\mathbb{C} \times S^1\) and \(L_i\) in \(S^1 \times \mathbb{C}\) coincide with the number of roots of \(g_i^t\) and \(g_{\underline{i}}^\varphi\) in \(\mathbb{C}^*\), respectively.
\end{remark}
\subsection{Main results}
%\begin{theorem}
%Let \(f:\mathbb{C}^2 \to \mathbb{C}\) be a convenient, non-degenerate mixed polynomial that is \(\Gamma\)-nice, and let 
%\(\mathcal{P}_0(f)=\{\Delta_0, \Delta_1, \dots, \Delta_N\}\). Fix \(i \in \{1,2,\dots,N-1\}\). If
%\[
%\min \Bigl\{ \bigl|\mathrm{m}_{\mathrm{s}}(f^{t}_{\Delta_i},0)\bigr|, 
%\bigl|\mathrm{m}_{\mathrm{s}}(f^{\varphi}_{\Delta_i},0)\bigr| \Bigr\} > 1,
%\]
%then \(\partial V_i\) is essential. 
%\end{theorem}
\begin{proof}[Proof of Theorem~\ref{prop:fastcriterion}]
By Lemma~\ref{lemma:windingnumber},
\[
w(L_1\cup L_2\cup\dots\cup L_i) = \mathrm{m}_{\mathrm{s}}(f^t_{\Delta_i},0).
\]
It follows that
\[
\bigl|w(L_1\cup L_2\cup\dots\cup L_i)\bigr| = \Bigl|\sum_{j=1}^i w(L_j)\Bigr| > 1.
\] 
Hence, there exists some \(j \le i\) such that \(|w(L_j)| > 1\).  

Similarly, by Lemma~\ref{lemma:windingnumber},
\[
w'(L_{i+1}\cup L_{i+2}\cup\ldots\cup L_N) = \mathrm{m}_{\mathrm{s}}(f^\varphi_{\Delta_i},0).
\]
It follows that
\[
\bigl|w'(L_{i+1}\cup L_{i+2}\cup\ldots\cup L_N)\bigr| = \Bigl|\sum_{j=i+1}^N w(L_{j})\Bigr| > 1.
\]  
Hence, there exists some \(k > i\) such that \(|w'(L_k)| > 1\).  

Therefore, by Theorem~\ref{th:char_essentialtori}, we obtain that  \(\partial V_i\) is essential.
\end{proof}

Note that, while our criterion can be used to prove $w(L_j)>1$ for some $j\leq i$, the property \(|w(L_j)| > 1\) for some \(j \le i\) is not equivalent to \(|w(L_1\cup L_2\cup\dots\cup L_i)| > 1\).  
For example, if \(i=2\), \(w(L_1)=2\) and \(w(L_2)=-2\), then \(w(L_1\cup L_2)=0\).  

Since each \(w(L_i)\) can be computed using Lemma~\ref{lemma:windingnumber}, this allows us to formulate, based on Theorem~\ref{th:char_essentialtori}, more refined conditions to identify essential tori in \(E(L_f)\).  

In what follows, we assume \(\#(I_f) \ge 3\); the case \(\#(I_f)=2\) is covered by Theorem~\ref{prop:fastcriterion}.

%\begin{theorem}
%Let $f:\mathbb{C}^2 \to \mathbb{C}$ be a convenient and non-degenerate mixed polynomial that is $\Gamma$-nice. Let $\mathcal{P}_0(f)=\{\Delta_0, \Delta_1,\dots, \Delta_N\}$ and  $I_f=\{i_1,i_2,\dots,i_{n}\}$, $n\geq 3$. 

%\medskip
%\noindent (i) If $|\mathrm{m}_{\mathrm{s}}(f^{t}_{\Delta_{i_1}},0)|>1$ and for some $i_j>i_1$, $$|\mathrm{m}_{\mathrm{s}}(f^\varphi_{\Delta_{i_j-1}},0)-\mathrm{m}_{\mathrm{s}}(f^\varphi_{\Delta_{i_j}},0)|>0,$$ then $\partial V_{i_1}$ is essential. 
    
%     \medskip
%\noindent
%(ii) If $1<k<n-1$ and for $i_j\leq i_k<i_l$, $$\min(|\mathrm{m}_{\mathrm{s}}(f^{t}_{\Delta_{i_j}},0)-\mathrm{m}_{\mathrm{s}}(f^{t}_{\Delta_{i_j-1}},0)|,|\mathrm{m}_{\mathrm{s}}(f^\varphi_{\Delta_{i_l-1}},0)-\mathrm{m}_{\mathrm{s}}(f^\varphi_{\Delta_{i_l}},0)|)>0,$$ 
%then $\partial V_{i_k}$ is essential.

%    \medskip
%\noindent
%(iii) If $|\mathrm{m}_{\mathrm{s}}(f^\varphi_{\Delta_{i_n}},0)|\}>1$ and for some $i_j<i_n$, $$|\mathrm{m}_{\mathrm{s}}(f^{t}_{\Delta_{i_j}},0)-\mathrm{m}_{\mathrm{s}}(f^{t}_{\Delta_{i_j-1}},0)|>0,$$ then $\partial V_{i_{n-1}}$ is essential.
%\end{theorem}
\begin{proof}[Proof of Theorem~\ref{thm:geralcriterionessentialtori}]\noindent (i) By the hypotheses of the theorem and the winding number formulas established in Lemma~\ref{lemma:windingnumber}, we have $|w(L_1\cup L_2\cup\dots\cup L_{i_1}])| > 1$. Since the component $L_{i_1}$ is the unique nonempty link within this configuration, the total winding number is concentrated in this single component, yielding $|w(L_{i_1})|=|w(L_1\cup L_2\cup\dots\cup L_{i_1}])| > 1$. Furthermore, the existence of an index $i_j > i_1$ such that $|w'(L_{i_j})| > 0$ is guaranteed by the formulas in Lemma~\ref{lemma:windingnumber}. Combining these non-vanishing winding numbers, the criteria of Theorem~\ref{th:char_essentialtori}(i) are satisfied, allowing us to conclude that $\partial V_{i_1}$ is an essential torus.

\vspace{0.2cm}\noindent (ii) Here, the hypotheses ensure the existence of at least three nonempty components in the topological decomposition given by the nested tori $V_i$, that is, at least three links $L_i, \ i=1,2,\dots,N,$ are nonempty. Applying these hypotheses to the formulas in Lemma~\ref{lemma:windingnumber}, we obtain the non-zero winding conditions $|w(L_{i_j})| > 0$ and $|w'(L_{i_l})| > 0$. These conditions fulfill the requirements of Theorem~\ref{th:char_essentialtori}(ii). Consequently, it follows that $\partial V_{i_k}$ is essential.

\vspace{0.2cm}\noindent (iii) The result for this case follows by a symmetric argument to that of part (i), where the roles of the indices are interchanged as dictated by the structure of the Newton boundary.\end{proof}

\begin{proof}[Proof of Theorem~\ref{thm:nonemptycriterion}]
   By Lemma~\ref{cor:nonemptycriterion}, we have $\#(I_f) \geq 4$. Thus, by Theorem~\ref{thm:n>3}, the exterior of $L_f$ contains either an essential sphere or an essential torus, which means that $L_f$ is reducible or toroidal. Therefore, by Corollary~\ref{cor:essential_torus_obstruction} and Theorem~\ref{Thurston}, we conclude that $L_f$ is not hyperbolic.
\end{proof}

\section{Calculations and Examples}\label{sec:examples}
We now review results from \cite{Oka2012} that provide a method for computing 
\(\mathrm{m}_{\mathrm{s}}(f^t_{\Delta_i},0)\) and 
\(\mathrm{m}_{\mathrm{s}}(f^\varphi_{\Delta_i},0)\) for 
\(\Delta_i \in \mathcal{P}_0(f)\). 
First, observe that these mixed univariate polynomials are homogeneous because $f_{\Delta_i}$ is the face function of a vertex in the Newton boundary.

Let $h(u,\bar{u})$ be a mixed univariate polynomial. It is defined to have a \textit{bi-degree} $(m,n)$ where $m=\deg_u h$ and $n=\deg_{\bar{u}}h$. 

For a non-zero complex number $\varsigma$, we define 
\begin{equation}
\epsilon(\varsigma)=\begin{cases}  1 \ \ &|\varsigma|<1 \\
0 \ \  &|\varsigma|=1\\
-1 \ \ & |\varsigma|>1.
\end{cases}
\end{equation}
The following lemma characterizes the total multiplicity of homogeneous mixed univariate polynomials. In our context, this applies to the mixed univariate polynomials $f^\varphi_{\Delta_i}$ and $f^t_{\Delta_i}$, which are homogeneous and do not vanish on $\mathbb{C}^*$ (Remark~\ref{rmk:niceness}(iii)). 
 %{\color{red}(BB: comment about how our polynomials satisfy this condition?)}
\begin{lemma}\label{multhomog}
Let $h(u,\bar{u})$ be a homogeneous mixed univariate polynomial of degree $d$ and bi-degree $(m,n)$ without roots in $\mathbb{C}^*$. Then 
\begin{equation*}
    h(u,\bar{u})=cu^{d-n}\bar{u}^{d-m}\prod_{i=1}^{m+n-d}(u-\varsigma_{i}\bar{u}),  \ c \in \mathbb{C}^*, |\varsigma_{i}| \neq 1, \text{ and }
\end{equation*} 
\begin{equation}\label{eq.multwithsigns}
    \mathrm{m}_{\mathrm{s},\mathrm{tot}}(h)=\mathrm{m}_{\mathrm{s}}(h,0)=m-n+\sum_{i=1}^{m+n-d} \epsilon(\varsigma_{i}).
\end{equation}
\end{lemma}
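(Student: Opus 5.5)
The plan is to reduce $h$ to an ordinary one-variable complex polynomial in the ratio $u/\bar u$, factor it there, and then read off the mapping degree factor by factor.

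\textbf{Extracting the monomial factor.} Since $h$ is homogeneous of degree $d$, every monomial of $h$ has the form $c_\nu u^{\nu}\bar u^{d-\nu}$. By definition of the bi-degree $(m,n)$, the exponent $\nu$ ranges between $d-n$ and $m$. Moreover, the two extreme coefficients $c_{m}$ (of $u^m\bar u^{d-m}$) and $c_{d-n}$ (of $u^{d-n}\bar u^{n}$) are nonzero. Factoring out $u^{d-n}\bar u^{d-m}$ therefore leaves a homogeneous mixed polynomial
\[
Q(u,\bar u)=\sum_{k=0}^{m+n-d} c_{k+d-n}\, u^{k}\bar u^{\,m+n-d-k}
\]
of degree $e:=m+n-d$. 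Its coefficients of $u^{e}$ and $\bar u^{e}$ are both nonzero.

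\textbf{Factoring via the ratio.} On $\mathbb{C}^*$ I would write $Q(u,\bar u)=\bar u^{e}P(u/\bar u)$ with $P(w)=\sum_k c_{k+d-n}w^k$. This $P$ is a genuine complex polynomial of degree exactly $e$ with nonzero constant term. Factoring $P(w)=c\prod_{i=1}^{e}(w-\varsigma_i)$ with all $\varsigma_i\neq 0$ and multiplying back by $\bar u^{e}$ gives $Q=c\prod_i(u-\varsigma_i\bar u)$. This is a polynomial identity, so it holds on all of $\mathbb{C}$, which yields the claimed factorization of $h$.

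\textbf{Excluding the unit circle.} The equation $u=\varsigma\bar u$ has a solution $u\neq 0$ if and only if $|\varsigma|=1$, since then $u/\bar u=\varsigma$ is solvable with $|u/\bar u|=1$. Because $h$ has no roots in $\mathbb{C}^*$, it follows that $|\varsigma_i|\neq 1$ for all $i$.

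\textbf{Computing the multiplicity.} Since $h$ has no roots in $\mathbb{C}^*$, its only possible root is $0$, so $\mathrm{m}_{\mathrm{s},\mathrm{tot}}(h)=\mathrm{m}_{\mathrm{s}}(h,0)$. By homogeneity, this number is the degree of $h/|h|$ on any circle $|u|=\varepsilon$, and I would compute it on the unit circle $u=\rme^{\rmi\theta}$. The degree of the argument map is additive over products of non-vanishing factors, so I would compute it factor by factor:
\begin{itemize}
\item Each $u$ contributes $+1$ and each $\bar u$ contributes $-1$, so $u^{d-n}\bar u^{d-m}$ contributes $(d-n)-(d-m)=m-n$.
\item For a factor with $|\varsigma|<1$, write $u-\varsigma\bar u=u\,(1-\varsigma\rme^{-2\rmi\theta})$. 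The second factor stays in the disk of radius $1$ around $1$ and so has degree $0$, and the factor contributes $+1$.
\item For a factor with $|\varsigma|>1$, write $u-\varsigma\bar u=-\varsigma\bar u\,(1-\varsigma^{-1}\rme^{2\rmi\theta})$. The same reasoning gives a contribution of $-1$.
\end{itemize}
In each case the contribution equals $\epsilon(\varsigma)$ as defined before the lemma. Summing all contributions gives \eqref{eq.multwithsigns}.

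The only delicate point is checking that the extreme coefficients are nonzero, so that $\deg P=e$ and $P(0)\neq0$. This follows directly from the definition of the bi-degree as the exact degrees of $h$ in $u$ and in $\bar u$.
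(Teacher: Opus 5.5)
Your proposal is correct and follows essentially the same route as the paper. Both strip off $u^{d-n}\bar u^{d-m}$, factor the remainder through the ordinary polynomial in $u/\bar u$, exclude $|\varsigma_i|=1$ because $h$ has no roots in $\mathbb{C}^*$, and compute the degree of $h/|h|$ on a circle. The only difference is in the last step: the paper uses one global homotopy $h_s$ deforming $h$ to a nonzero constant multiple of $u^{d-n+\ell}\bar u^{\,n-\ell}$ (with $\ell$ the number of $\varsigma_i$ in the open unit disk). You instead use additivity of the degree and show that each factor $1-\varsigma\rme^{\mp 2\rmi\theta}$ has degree zero, which is the same deformation done factor by factor; your explicit check that the extreme coefficients are nonzero also spells out a point the paper leaves implicit.
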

\begin{proof}
If $h$ is constant, it must be a non-zero constant of degree $d=0$ and bi-degree $(0,0)$. In this case, the result follows trivially. Suppose now that $h$ is non-constant. Since $h$ has bi-degree $(m,n)$, we have $m=\deg_{u}h$, $n=\deg_{\bar{u}}h$ and 
\begin{equation}\label{eq:multhomog1}
h(u,\bar{u})=cu^{d-n}\bar{u}^{d-m} \hat{h}(u,\bar{u}),\ c\in \mathbb{C}^*,
\end{equation}
where $\hat{h}$ is a homogeneous mixed polynomial of degree $m+n-d$ and bi-degree $(m+n-d,m+n-d)$.
Hence, $\hat{h}$ can be expressed as:
\[\hat{h}(u,\bar{u})=\bar{u}^{m+n-d}\hat{h}\left(\frac{u}{\bar{u}},1\right).\]
The polynomial $\hat{h}\left(\frac{u}{\bar{u}},1\right)$, considered in variable $\frac{u}{\bar{u}}$, has $m+n-d$ non-zero roots  $\varsigma_i$ (which may not all be distinct). Since $h$ does not vanish in $\mathbb{C}^*$, we must have $|\varsigma_i|\neq 1$ for all $i$. This leads to the product form:
\begin{equation}\label{eq:multhomog2}
\hat{h}(u,\bar{u})=\bar{u}^{m+n-d} \prod_{i=1}^{m+n-d}\left(\frac{u}{\bar{u}}-\varsigma_i\right)=\prod_{i=1}^{m+n-d}(u-\varsigma_i\bar{u}), \ |\varsigma_i|\neq 1.
\end{equation}
Substituting \eqref{eq:multhomog2} into \eqref{eq:multhomog1} yields the desired decomposition. 

Assume that  $$|\varsigma_1|\leq |\varsigma_2|\leq \cdots \leq |\varsigma_{\ell}|< 1 < |\varsigma_{\ell+1}|\leq \cdots \leq |\varsigma_{m+n-d}|.$$
Consider the family of mixed univariate polynomials in the parameter $s \in [0,1]$:
  $$ h_s(u,\bar{u})=cu^{d-n}\bar{u}^{d-m}u^\ell \bar{u}^{m+n-d-\ell}\prod_{i=1}^{\ell}\left(1-s\varsigma_{i}\frac{\bar{u}}{u}\right)\prod_{i=\ell+1}^{m+n-d}\left(s\frac{u}{\bar{u}}-\varsigma_{i}\right).$$

 It is clear that $h_s$ provides a homotopy:
 \[\frac{h_s}{|h_s|}:S^1_{\varepsilon}(0) \to S^1,\]
 for a sufficiently small $\varepsilon>0$. Therefore, the multiplicity with signs at the origin of $h$ coincides with the  multiplicity  with signs at the origin of $cu^{d-n}\bar{u}^{d-m}u^\ell \bar{u}^{m+n-d-\ell}$, which is clearly equal to
$$m-n+\sum_{i=1}^{m+n-d} \epsilon(\varsigma_{i}).$$
 
 The total multiplicity with signs $\mathrm{m}_{\mathrm{s},\mathrm{tot}}(h)$ and the multiplicity with signs at the origin $\mathrm{m}_{\mathrm{s}}(h,0)$ coincide because $h$ is homogeneous and does not vanish in $\mathbb{C}^*$. Therefore, we obtain the formula in \eqref{eq.multwithsigns}.
\end{proof}

\begin{remark}\label{rmk:bidegree}
\hspace{2.0cm} 

\begin{itemize}
    \item[(i)] If a homogeneous mixed univariate polynomial  $h$ has no roots in $\mathbb{C}$, it is a non-zero constant by homogeneity. In this case, $h$ defines a map \eqref{eq:argumentfunction} with vanishing topological degree, thus $m_s(h,0)=0$.
    \item[(ii)] If a mixed univariate polynomial $h$ has bi-degree $(m,n)$ with $m>n$, the total multiplicity with signs is generically greater than $m-n$. This property implies that the zero set $V(h)$ has dimension zero and is nonempty. For example, in the case where $m>1$ and $n=1$, the zero set $V(h)$ is always nonempty (\cite{Elkadi2017}).
\end{itemize}
\end{remark}
\begin{remark}[Semiholomorphic polynomials]\label{rmk:semiholomorphic}
   Let \(\Delta_i \in \mathcal{P}_0(f)\). If \(f_{\Delta_i}\) is \(u\)- (or \(\bar{u}\)-) semiholomorphic, then \(f^t_{\Delta_i}\) is a mixed univariate polynomial of degree \(\deg_u f^t_{\Delta_i}\) (resp. \(\deg_{\bar{u}} f^t_{\Delta_i}\)) and bi-degree \((\deg_u f^t_{\Delta_i},0)\) (resp. \((0,\deg_{\bar{u}} f^t_{\Delta_i})\)).  
By \eqref{eq.multwithsigns}, if $f^t_{\Delta_i}$ has an isolated root at the origin, we have
\[
\mathrm{m}_{\mathrm{s}, \mathrm{tot}}(f^t_{\Delta_i}) = \mathrm{m}_{\mathrm{s}}(f^t_{\Delta_i},0) = 
\begin{cases}
\deg_u f^t_{\Delta_i}, & \text{if } f_{\Delta_i} \text{ is } u\text{-semiholomorphic},\\[2mm]
-\deg_{\bar{u}} f^t_{\Delta_i}, & \text{if } f_{\Delta_i} \text{ is } \bar{u}\text{-semiholomorphic}.
\end{cases}
\]

Analogously, if \(f_{\Delta_i}\) is \(v\)- (or \(\bar{v}\)-) semiholomorphic, and  $f^\varphi_{\Delta_i}$ has an isolated root at the origin, then
\[
\mathrm{m}_{\mathrm{s}, \mathrm{tot}}(f^\varphi_{\Delta_i}) = \mathrm{m}_{\mathrm{s}}(f^\varphi_{\Delta_i},0) =
\begin{cases}
\deg_v f^\varphi_{\Delta_i}, & \text{if } f_{\Delta_i} \text{ is } v\text{-semiholomorphic},\\[1mm]
-\deg_{\bar{v}} f^\varphi_{\Delta_i}, & \text{if } f_{\Delta_i} \text{ is } \bar{v}\text{-semiholomorphic}.
\end{cases}
\]
\end{remark}

\begin{example}\label{ex:example2}
  Consider the convenient and non-degenerate mixed polynomial
\[
f(u,\bar{u},v,\bar{v}) = u^4 + \bar{u} u^2v +u^2  \bar{v}^2  + v^6.
\]  
The vertices of the Newton boundary are 
\(\Delta_0 = (0,6), \ \Delta_1 = (2,2),\) and \(\Delta_2 = (4,0)\), 
with corresponding face functions:
\[
f_{\Delta_0}(u,\bar{u},v,\bar{v}) = v^6, \quad 
f_{\Delta_1}(u,\bar{u},v,\bar{v})= u^2 \bar{v}^2 , \quad 
f_{\Delta_2}(u,\bar{u},v,\bar{v}) = u^4.
\]
Thus, we get the mixed univariate polynomials associated to the vertices:
\begin{equation*}
\begin{aligned}
    f^t_{\Delta_0}(u,\bar{u}) &= \mathrm{e}^{6 \mathrm{i} t}, & f^{\varphi}_{\Delta_0}(v,\bar{v}) &= v^6, \\
    f^t_{\Delta_1}(u,\bar{u}) &= u^2\mathrm{e}^{-2 \mathrm{i} t}, & f^{\varphi}_{\Delta_1}(v,\bar{v}) &= \mathrm{e}^{2 \mathrm{i} \varphi} \bar{v}^2, \\
    f^t_{\Delta_2}(u,\bar{u}) &= u^4, & f^{\varphi}_{\Delta_3}(v,\bar{v}) &= \mathrm{e}^{4 \mathrm{i} \varphi}.
\end{aligned}
\end{equation*}
Following Remarks~\ref{rmk:semiholomorphic} and \ref{rmk:bidegree}(i), we can directly compute the values in Tables~\ref{tbl:1ex1} and \ref{tbl:2ex1}.
\begin{table}[h!]
\centering
\begin{tabular}{|c|c|c|}
\hline
\(i\) & \(\mathrm{m}_{\mathrm{s}}(f^t_{\Delta_i},0)\) & 
\(\mathrm{m}_{\mathrm{s}}(f^t_{\Delta_i},0) - \mathrm{m}_{\mathrm{s}}(f^t_{\Delta_{i-1}},0)\) \\
\hline
0 & 0 & --- \\
1 & 2 & 2 \\
2 & 4 & 2 \\
\hline
\end{tabular}
\caption{Signed multiplicities and differences for \(f^t_{\Delta_i}\).}
\label{tbl:1ex1}
\end{table}

\begin{table}[h!]
\centering
\begin{tabular}{|c|c|c|}
\hline
\(i\) & \(\mathrm{m}_{\mathrm{s}}(f^\varphi_{\Delta_i},0)\) & 
\(\mathrm{m}_{\mathrm{s}}(f^\varphi_{\Delta_{i-1}},0) - \mathrm{m}_{\mathrm{s}}(f^\varphi_{\Delta_i},0)\) \\
\hline
0 & 6 & --- \\
1 & -2 & 8 \\
2 & 0 & -2 \\
\hline
\end{tabular}
\caption{Signed multiplicities and differences for \(f^\varphi_{\Delta_i}\).}
\label{tbl:2ex1}
\end{table}
By Theorem~\ref{prop:fastcriterion}, we conclude that \(\partial V_1\) is essential. Moreover, $L_f$ is not hyperbolic. 
\end{example}
\begin{example}\label{ex:ex2}
Consider the convenient and non-degenerate mixed polynomial in Example~\ref{ex:example1}:
\[
f(u,\bar{u},v,\bar{v}) = u^5 + u^2 \bar{u}^2 v + u^3 v^2 - \mathrm{i} u \bar{u}^2 v^2 + u^2 \bar{u} v^2 + \bar{u} v^6 + v^9.
\]
The vertices of the Newton boundary are 
\(\Delta_0 = (0,9), \ \Delta_1 = (1,6), \ \Delta_2 = (3,2)\), and $ \Delta_3 = (5,0)$, with associated mixed univariate polynomials:
\begin{equation*}
\begin{aligned}
    f^t_{\Delta_0}(u,\bar{u}) &= \mathrm{e}^{9 \mathrm{i} t}, & f^{\varphi}_{\Delta_0}(v,\bar{v}) &= v^9, \\
    f^t_{\Delta_1}(u,\bar{u}) &= \bar{u}\mathrm{e}^{6 \mathrm{i} t}, & f^{\varphi}_{\Delta_1}(v,\bar{v}) &= \mathrm{e}^{- \mathrm{i} \varphi} v^6, \\
    f^t_{\Delta_2}(u,\bar{u}) &= (u^3 - \mathrm{i} u \bar{u}^2 + u^2 \bar{u}) \mathrm{e}^{2 \mathrm{i} t}, & f^{\varphi}_{\Delta_2}(v,\bar{v}) &= (\mathrm{e}^{3 \mathrm{i} \varphi} - \mathrm{i} \mathrm{e}^{- \mathrm{i} \varphi} + \mathrm{e}^{\mathrm{i} \varphi}) v^2, \\
    f^t_{\Delta_3}(u,\bar{u}) &= u^5, & f^{\varphi}_{\Delta_3}(v,\bar{v}) &= \mathrm{e}^{5 \mathrm{i} \varphi}.
\end{aligned}
\end{equation*}
By Remarks~\ref{rmk:semiholomorphic} and \ref{rmk:bidegree}(i), we can directly compute the entries in the second column of Tables~\ref{tbl:1} and \ref{tbl:2}, with the exception of 
\(\mathrm{m}_{\mathrm{s}}(f^t_{\Delta_2},0)\), which requires Lemma~\ref{multhomog}.  
Indeed, the mixed univariate polynomial $f^t_{\Delta_2}$ is homogeneous of degree \(d=3\) and bi-degree \((m,n) = (3,2)\).  
Applying \eqref{eq.multwithsigns}, we get:
\[
\mathrm{m}_{\mathrm{s}}(f^t_{\Delta_2},0) = 3 - 2 + \sum_{i=1}^{2} \epsilon(\varsigma_i),
\]
where
\[
\varsigma_1 = \mathrm{i}\frac{1+\sqrt{5}}{2}, \qquad
\varsigma_2 = \mathrm{i}\frac{1-\sqrt{5}}{2}.
\]
Since \(|\varsigma_1| > 1\) and \(|\varsigma_2| < 1\), we obtain 
\(\mathrm{m}_{\mathrm{s}}(f^t_{\Delta_2},0) = 1\).
\begin{table}[h!]
\centering
\begin{tabular}{|c|c|c|}
\hline
\(i\) & \(\mathrm{m}_{\mathrm{s}}(f^t_{\Delta_i},0)\) & 
\(\mathrm{m}_{\mathrm{s}}(f^t_{\Delta_i},0) - \mathrm{m}_{\mathrm{s}}(f^t_{\Delta_{i-1}},0)\) \\
\hline
0 & 0 & --- \\
1 & -1 & -1 \\
2 & 1 & 2 \\
3 & 5 & 4 \\
\hline
\end{tabular}
\caption{Signed multiplicities and differences for \(f^t_{\Delta_i}\).}
\label{tbl:1}
\end{table}
\begin{table}[h!]
\centering
\begin{tabular}{|c|c|c|}
\hline
\(i\) & \(\mathrm{m}_{\mathrm{s}}(f^\varphi_{\Delta_i},0)\) & 
\(\mathrm{m}_{\mathrm{s}}(f^\varphi_{\Delta_{i-1}},0) - \mathrm{m}_{\mathrm{s}}(f^\varphi_{\Delta_i},0)\) \\
\hline
0 & 9 & --- \\
1 & 6 & 3 \\
2 & 2 & 4 \\
3 & 0 & 2 \\
\hline
\end{tabular}
\caption{Signed multiplicities and differences for \(f^\varphi_{\Delta_i}\).}
\label{tbl:2}
\end{table}

By Lemma~\ref{cor:nonemptycriterion}, the link \(L_i,\ i=1,2,3,\) is nonempty, so \(I_f = \{1,2,3\}\).  
Hence, applying Theorem~\ref{thm:geralcriterionessentialtori}, we conclude that \(\partial V_2\) is essential. Moreover, $L_f$ is not hyperbolic. 
\\
Note that in this example, Theorem~\ref{prop:fastcriterion} does not allow us to conclude the existence of an essential torus because $|\mathrm{m}_{\mathrm{s}}(f_{\Delta_1}^t,0)|=|\mathrm{m}_{\mathrm{s}}(f_{\Delta_2}^t,0)|=1$.
\end{example}
\begin{example}\label{ex:nonhipn>3}
 Consider the convenient and non-degenerate mixed polynomial in Example~\ref{ex:nestetorii}:
\[
f(u,\bar{u},v,\bar{v}) = u^4 - u^3 v \bar{v} + u^2 v^3 \bar{v}^3 - u v^6 \bar{v}^6 + v^{10} \bar{v}^{10}.
\]  
The vertices of the Newton boundary are 
\(\Delta_0 = (0,20), \ \Delta_1 = (1,12), \ \Delta_2 = (2,6), \ \Delta_3 = (3,2),\) and \(\Delta_4 = (4,0)\).
The mixed univariate polynomials associated with these vertices are given by:
\begin{equation*}
\begin{aligned}
    f^t_{\Delta_0}(u,\bar{u}) &= 1, & f^{\varphi}_{\Delta_0}(v,\bar{v}) &= v^{10}\bar{v}^{10}, \\
    f^t_{\Delta_1}(u,\bar{u}) &=- u, & f^{\varphi}_{\Delta_1}(v,\bar{v}) &= -\mathrm{e}^{\mathrm{i} \varphi} v^6\bar{v}^6, \\
    f^t_{\Delta_2}(u,\bar{u}) &= u^2, & f^{\varphi}_{\Delta_2}(v,\bar{v}) &= \mathrm{e}^{2\mathrm{i} \varphi} v^3\bar{v}^3, \\
    f^t_{\Delta_3}(u,\bar{u}) &=- u^3, & f^{\varphi}_{\Delta_3}(v,\bar{v}) &= -\mathrm{e}^{3\mathrm{i} \varphi} v\bar{v}, \\
    f^t_{\Delta_4}(u,\bar{u}) &= u^4, & f^{\varphi}_{\Delta_4}(v,\bar{v}) &= \mathrm{e}^{4\mathrm{i} \varphi}.
\end{aligned}
\end{equation*}
By a direct application of Lemma~\ref{multhomog} and Remark~\ref{rmk:bidegree}(i), we obtain the values presented in Tables~\ref{tbl:1ex_new} and~\ref{tbl:2ex_new}.
\begin{table}[H]
\centering
\begin{tabular}{|c|c|c|}
\hline
\(i\) & \(\mathrm{m}_{\mathrm{s}}(f^t_{\Delta_i},0)\) & \(\mathrm{m}_{\mathrm{s}}(f^t_{\Delta_i},0) - \mathrm{m}_{\mathrm{s}}(f^t_{\Delta_{i-1}},0)\) \\
\hline
0 & 0 & --- \\
1 & 1 & 1 \\
2 & 2 & 1 \\
3 & 3 & 1 \\
4 & 4 & 1 \\
\hline
\end{tabular}
\caption{Signed multiplicities and differences for \(f^t_{\Delta_i}\).}
\label{tbl:1ex_new}
\end{table}
\begin{table}[H]
\centering
\begin{tabular}{|c|c|c|}
\hline
\(i\) & \(\mathrm{m}_{\mathrm{s}}(f^\varphi_{\Delta_i},0)\) & \(\mathrm{m}_{\mathrm{s}}(f^\varphi_{\Delta_{i-1}},0) - \mathrm{m}_{\mathrm{s}}(f^\varphi_{\Delta_i},0)\) \\
\hline
0 & 0 & --- \\
1 & 0 & 0 \\
2 & 0 & 0 \\
3 & 0 & 0 \\
4 & 0 & 0 \\
\hline
\end{tabular}
\caption{Signed multiplicities and differences for \(f^\varphi_{\Delta_i}\).}
\label{tbl:2ex_new}
\end{table}
Since all values in Table~\ref{tbl:2ex_new} are zero, neither Theorem~\ref{prop:fastcriterion} nor Theorem~\ref{thm:geralcriterionessentialtori} can be applied to conclude that $L_f$ is not hyperbolic. Nevertheless, a direct verification shows that, by Theorem~\ref{thm:nonemptycriterion}, $L_f$ is not hyperbolic. 
\end{example}

\Addresses
\end{document}